\documentclass[11pt]{article}

\usepackage{hyperref}
\usepackage{inputenc}
\usepackage{amsmath}
\usepackage{amsfonts}
\usepackage{amssymb}
\usepackage{graphicx}
\usepackage[T1]{fontenc}
\usepackage[english]{babel}
\usepackage{mathrsfs}
\usepackage{amsthm}
\usepackage[all]{xy}
\usepackage{geometry}
\usepackage{enumerate}
\allowdisplaybreaks
\usepackage{color,xcolor}
\usepackage{dsfont}
\usepackage{appendix}
\geometry{hmargin=2cm,vmargin=2.25cm}

\title{Adaptation to a heterogeneous patchy environment with nonlocal dispersion}
\author{Alexis L\'{e}culier\thanks{Laboratoire Jacques-Louis Lions; UMR 7598; Sorbonne Universit\'e, Inria, CNRS, Universit\'e de Paris, , F-75005
Paris; E-mail : \texttt{leculier@ljll.math.upmc.fr}} \and Sepideh Mirrihami\thanks{Institut Montpelli\'{e}rain Alexander Grothendieck, Universit\'{e} Montpellier, CNRS, Montpellier, France; E-mail: \texttt{ sepideh.mirrahimi@umontpellier.fr} }}
\begin{document}                     
\maketitle

\definecolor{olivegreen}{RGB}{0, 100,0 }
\newcommand{\SM}{\textcolor{blue}}
\newcommand{\SMM}{\textcolor{blue}}
\newcommand{\AL}{\textcolor{red}}

\begin{abstract}
In this work, we provide an asymptotic analysis of the solutions to an elliptic integro-differential  equation. This equation describes the evolutionary equilibria of a phenotypically structured population, subject to selection, mutation, and both local and non-local dispersion in a spatially heterogeneous, possibly patchy, environment. Considering small effects of mutations, we provide an asymptotic description of the equilibria of the phenotypic density. This asymptotic description involves a Hamilton-Jacobi equation with constraint coupled with an eigenvalue problem. Based on such analysis, we characterize some qualitative properties of the phenotypic density at equilibrium depending on the heterogeneity of the environment. In particular, we show that when the heterogeneity of the environment is low,  the population concentrates around a single phenotypic trait leading to a unimodal phenotypic distribution. On the contrary, a strong fragmentation of the environment leads to multi-modal phenotypic distributions.

\end{abstract}

\newcommand{\e}{\varepsilon}
\newcommand{\R}{\mathbb{R}}

\newtheorem{theorem}{Theorem}
\newtheorem{corollary}{Corollary}
\newtheorem{lemma}{Lemma}
\newtheorem{definition}{Definition}
\newtheorem{proposition}{Proposition}
\newtheorem{example}{Example}
\newtheorem{definition-proposition}{Definition-Proposition}
\newtheorem*{notation}{Notation}
\newtheorem*{remark}{Remark}

\tableofcontents
\section{Introduction}
\subsection{The model and motivations}
We are interested in the study of  evolutionary equilibria of phenotypically structured populations in spatially heterogeneous and possibly patchy environments. Understanding the interplay between heterogeneous selection, migration and mutation is a major objective of evolutionary biology theory and could lead to a better understanding of the speciation process and the evolutionary response to global change  \cite{Whitlock2015}.
Joint evolution and spatial dispersion  {have} to be considered in the study of  species that need to adapt to climatic change, or in epidemiology, where pathogenic viruses or bacteria   may propagate in a population that has been partially vaccinated or  treated with antibiotics   \cite{HK.AL:2006,RH.JD.TH:12}. The mathematical problem then amounts to describing both phenotypic and spatial structure of the population, and connects to key questions in evolutionary ecology about the evolution of species ranges (where are the individuals?) and niches (which phenotypes are observed within the species?).  

We investigate  particularly  the effect of fragmented environment, considering non-local dispersion.   The non-local dispersion may have antagonistic effects on the population dynamics. On the one hand, it may allow the population to reach new favorable geographic regions which are not accessible by a local diffusion. On the other hand, it may also impede local adaptation by bringing individuals with locally maladapted traits from other regions. While the role of the nonlocal dispersion and the fragmentation of the environment is significant in many situations, as in the adaptation of forest trees to the climate change because of the effect of the wind on the seeds or the pollens, very few theoretical works take it into account \cite{ecology1}.

More precisely, we provide in this work an asymptotic analysis of the equilibria of a non-local parabolic Lotka-Volterra type equation, modeling the interplay between mutation, selection, local and non-local dispersion in possibly fragmented environments. The equation under study is the following   
\begin{equation} \label{E-0} 
\left\lbrace 
\begin{aligned}
& - \sigma_\theta \partial_{\theta \theta} n  -\sigma_{x}\partial_{xx} n  +\sigma_{x} L n  = n \left(R(x, \theta) - \kappa\rho (x) \right) &&\text{ in } \Omega \times ]-A, A[, \\
&\rho  (x) = \int_{]-A,A[} n (x,\theta) d \theta && \text{ in } \Omega, \\
& L n  (x, \theta) = \int_{\Omega} [n (x,\theta) - n (y,\theta)]K(x-y)dy && \text{ in } \Omega \times ]-A,A[,\\
&\partial_{\nu_x} n  (x, \theta )= 0 \ \text{ on } \partial \Omega \times ]-A,A[, \qquad \partial_{\nu_\theta} n  (x, \pm A) = 0   \ \text{ on } \Omega \times \left\lbrace \pm A \right\rbrace.
\end{aligned}
\right.
\end{equation}
with $\Omega$ a bounded subset of $\mathbb{R}$, representing the spatial domain. Here, $n (x,\theta)$ stands for the density of a population at equilibrium at position $x$ with a phenotypical trait $\theta$.  The term $R(x,\theta)$ corresponds to the intrinsic growth rate of individuals of phenotype $\theta$ at position $x$. The term $\rho(x)$ corresponds to the total size of the population at position $x$. Via the term $\kappa \rho$ in the right hand side of \eqref{E},  we take into account a mortality rate due to the uniform competition between the individuals at the same position,   with intensity $\kappa$. The trait of the parent is transmitted to the offspring. However the trait can be modified due to the mutations that we model by a Laplace term with respect to $\theta$. We also consider that the species is subject to a local and a non-local dispersion in the space variable $x$. Indeed, in addition to a classical local dispersion term modeled by a Laplace term with respect to $x$, we also take into account a non-local dispersion modeled by the integral operator $L$, assuming that the individuals can jump from position $x$ to  position $y$ with a rate $K(x-y)$. Finally, we have denoted by $\partial_{\nu_x}, \ \partial_{\nu_\theta}$ the exterior derivatives with respect to the variables $x$ and $\theta$.  The Neumann boundary condition with respect to $x$ models the fact that the species cannot leave the domain. The Neumann boundary condition with respect to $\theta$ means that the mutants cannot be born with a trait in $]-A,A[^c$. 

We are in particular interested in a regime where the mutations have small effects. To study such a situation, we set $\sigma_\theta=\e^2$, with $\e$ a small parameter. We also set $\sigma_x=\kappa=1$ to reduce the amount of notation. The equation on  the population density, denoted now by $n_\e$, is then written 
\begin{equation} \label{E} \tag{E}
\left\lbrace 
\begin{aligned}
&- \varepsilon^2 \partial_{\theta \theta} n_\varepsilon -\partial_{xx} n_\varepsilon  + L n_\varepsilon = n_\varepsilon(R(x, \theta) - \rho_\varepsilon(x)) &&\text{ in } \Omega \times ]-A, A[, \\
&\rho_\varepsilon (x) = \int_{]-A,A[} n_\varepsilon(x,\theta) d \theta && \text{ in } \Omega, \\
& L n_\varepsilon (x, \theta) = \int_{\Omega} [n_\varepsilon(x,\theta) - n_\varepsilon(y,\theta)]K(x-y)dy && \text{ in } \Omega \times ]-A,A[,\\
&\partial_{\nu_x} n_\varepsilon (x, \theta )= 0 \ \text{ on } \partial \Omega \times ]-A,A[, \qquad \partial_{\nu_\theta} n_\varepsilon (x, \pm A) = 0   \ \text{ on } \Omega \times \left\lbrace \pm A \right\rbrace.
\end{aligned}
\right.
\end{equation}
 Note   that when the mutation rate $\sigma_\theta$ is small compared to the dispersion rate $\sigma_x$,  equation \eqref{E-0} can be brought to the equation above via a change of variables. 
Our objective is to provide an asymptotic analysis of   $n_\e$ as the parameter $\e$ becomes vanishingly small.

Several questions motivate our analysis. Can we  determine extinction and survival criteria for this model? 
How would the population be spatially  distributed at equilibrium? Would all the spatial domain be  exploited close to its local carrying capacity or   would we observe formation of clusters in certain zones of the environment? How the population would be distributed phenotypically? Would the population be adapted locally everywhere, or  would we observe emergence of dominant traits? More specifically, would we observe emergence of generalist traits being adapted to an average environment, or specialist traits being adapted to certain zones of the environment? 
 What will be the impact of the the fragmentation of the habitat on the phenotypical distribution of the population? Would it lead to the emergence of specialist traits?

\subsection{The state of the art}

Related models  and the questions of spatial distribution and ecological niches were studied from a numerical point of view in the biological literature, considering local dispersion and a connected domain (see for instance \cite{MD.UD:03,JP.NB:05}). In particular, in \cite{JP.NB:05} formation of clusters was observed in a closely related model. While the authors suggested that the formation of such clusters would be a result of the bounded domain or small mutational effects, they did not provide any analytical support for such hypothesis. Such type of models, again in the context of local dispersion, was later derived from stochastic individual based models by Champagnat and M\'{e}l\'{e}ard in \cite{ChampagnatMeleard} where further numerical studies were provided (see also \cite{ADP} for a preliminary analysis of this model by Arnold, Desvilettes and Pr\'{e}vost). More recently, Alfaro, Coville and Raoul \cite{ACR} studied a closely related model, considering again local dispersion but in unbounded domains. They proved propagation phenomena and existence of traveling front solutions for parabolic equations close to \eqref{E} (see also the related works   \cite{BouinMirrahimi,ABR}). In the context of their study with unbounded domain, one expects indeed that the population would propagate and get adapted locally in every position in space attaining its local carrying capacity, which is in contrast with what we observe in bounded domains, in particular with what we obtain in the present work. However, in such model to our knowledge, yet there is no result characterizing rigorously the population distribution at the back of the front  (see however the work of Berestycki, Jin and Silvestre in \cite{BJS} in a particular case with spatially homogeneous growth rate).

A wide number of articles have also studied a  closely related equation known as the "cane-toads" model where the growth rate is independent of the trait, but the trait influences the ability of dispersal leading to a $\theta$ coefficient in front of the diffusion term in space (see for instance \cite{Turanova,BouinCalvez,BHR3,BHR2}).  This equation is motivated by the propagation of the cane toads in Australia by taking into account the role of a phenotypical trait: the size of the legs of the toads. More closely to our work, the steady states of a "cane-toads" type  model, in the regime of small mutations, were studied by Perthame and Souganidis in \cite{PerthameSouganidis} and by Lam and Lou in \cite{LamLou}. In another related project, a model where  similarly to \eqref{E} the growth rate, and not the dispersion rate, depends on the phenotype, but considering a discrete spatial structure, was studied by Mirrahimi and Gandon \cite{Mirrahimi17,MirrahimiGandon}. In these works an asymptotic analysis of the steady states in the regime of small mutations was provided. In particular, it was shown that the presence of spatial heterogeneity can lead to polymorphic situations, that is the emergence of several dominant traits in the population.

In this work, we will use an approach based on Hamilton-Jacobi equations, which is adapted to study the small mutation regime ($\varepsilon$ small). A closely related approach was first introduced in \cite{Opt_geom_2,Opt_geom_1}, by Friedlin using  probabilistic techniques and by Evans and Souganidis using deterministic tools, to study the propagation phenomena in reaction-diffusion equations. In the context of models from evolutionary
biology and in the regime of small mutations, this method was suggested by Dieckmann, Jabin, Mishler and Perthame \cite{DJMP}.  In \cite{BP}, Barles and Perthame provide the first rigorous results within this approach and obtain a concentration phenomena considering homogeneous environments:  as the mutational effects become small, the solution converges to a Dirac mass. In this case, the population at equilibrium is monomorphic (there is a single dominant trait in the population). We quote \cite{BMP} which extends the main results of \cite{BP}. This approach was then widely extended to study more general models with heterogeneity. In particular, in the context of the space heterogeneous environments, the works \cite{BouinMirrahimi,Turanova,PerthameSouganidis,Mirrahimi17} are within this framework. However, the analysis provided in these previous works do not allow to study  problem \eqref{E}.  The closest work is the one in \cite{BouinMirrahimi} which studies the propagation phenomenon in an unbounded domain, considering a different rescaling. Note also that none of the previous works considered a non-local dispersion operator, which adds significant difficulties to the analysis.

In an ecological context, fragmented environments and non-local spatial dispersion phenomena were studied by L\'eculier, Mirrahimi and Roquejoffre \cite{papier2} and by L\'eculier and Roquejoffre \cite{article3}. Both mentioned works do not take into account any phenotypical structure. In \cite{papier2}, the authors study invasion phenomena in a Fisher-KPP equation involving a fractional Laplacian arising in a fragmented periodic environment with Dirichlet exterior conditions. In \cite{article3}, the authors study the existence and uniqueness of bounded positive steady-states in a Fisher-KPP equation involving a fractional Laplacian in general fragmented environment with Dirichlet exterior conditions. One of the perspectives of the present work is to study models with other operators of dispersion, as the fractional Laplacian $(-\partial_{xx})^\alpha$, instead of $-\partial_{xx}+L$, and considering Dirichlet exterior conditions.

\subsection{The assumptions and the notations}

The domain $\Omega \subset \mathbb{R}$ is assumed to be bounded and composed of one or several connected components :
\begin{equation}\tag{H1} \label{H1Chap5}
\Omega = \underset{i=1}{\overset{m}{\bigcup}} \, ]a_i, b_i[ \qquad \text{ with } \quad a_1 < b_1 < a_2 <...<a_{m} <b_{m}.
\end{equation}
We assume that the growth rate verifies
\begin{equation}\tag{H2}\label{H2Chap5}
 R \in C^1\left\lbrace(\overline{\Omega }\times [-A,A] \right\rbrace) \quad \text{ and } \quad  \| R \|_{W^{1, \infty}( \Omega \times ]-A,A[)} < C_R.
\end{equation}
\begin{example}\label{example1}
A typical example of growth rate is written
\[R(x, \theta ) = r - g(bx -\theta)^2. \]
In this example, $r$ is the maximal growth rate. The above quadratic term indicates that the optimal trait at position $x$ is given by $\theta_o = bx$. The term $b$ is the gradient of the environment: it indicates how fast the optimal trait varies as a function of a position in space. Moreover, $g$ corresponds to the selection pressure. If $g$ increases, the habitats becomes more hostile for unsuitable individuals. 
\end{example}

We make the following assumptions on $K$
\begin{equation}\label{K}\tag{H3}
	K \in C^1(\Omega), \quad K > 0, \quad K(x) = K(-x),  \quad  0 < c_K < K < C_K \quad \text{ and }\quad  |\partial_x K| < C_K. 
\end{equation}

We introduce here two eigenvalue problems associated to the equation \eqref{E}: let $\lambda(\theta, \rho)$ be the principal eigenvalue of the operator $-\partial_{xx} - L - [R (\cdot, \theta) - \rho] Id$ and $\mu_\varepsilon$ be the principal eigenvalue of the operator $-\partial_{xx} -\varepsilon^2 \partial_{\theta \theta} - L - R$  with Neumann boundary conditions:
    \begin{equation}\label{vpchap5}
    \text{i.e.} \quad  \left\lbrace
    \begin{aligned}
    &-\partial_{xx}\psi^\theta+ L(\psi^\theta) - [R (\cdot, \theta) - \rho]\psi^\theta = \lambda(\theta, \rho)\psi^\theta && \text{ in } \Omega, \\
    & \partial_{\nu_x} \psi^\theta = 0 && \text{ in }  \partial  \Omega
    \end{aligned}
    \right.
    \end{equation}
    and 
    \begin{equation}\label{vpchap52}
\hspace{2.2cm} \left\lbrace
\begin{aligned}
&-\partial_{xx} \xi_\varepsilon - \varepsilon^2 \partial_{\theta\theta} \xi_\varepsilon + L \xi_\varepsilon  - R \xi_\varepsilon = \mu_\varepsilon \xi_\varepsilon  &&\text{ in } \Omega \times ]-A,A[, \\
& \partial_{\nu_x} \xi_\varepsilon = \partial_{\nu_\theta }\xi_\varepsilon = 0 && \text{ on } \partial ( \Omega \times ]-A,A[).
\end{aligned}
\right.
\end{equation}
All along the article, we consider that the principal eigenfunctions (such as $\psi^{\theta}$ or $\xi_\varepsilon$) are taken positive with $L^2$ norms equal to $1$. The existence and some properties of $\lambda$ are proved in Appendix A (the existence of $\mu$ follows similar arguments than the one of $\lambda$, therefore we let it to the reader).

We make the following assumption:
\begin{equation}\label{assumption}\tag{H4}
\exists \theta_0 \in ]-A,A[, \quad \text{ such that } \quad \underset{ \theta \in ]-A,A[}{\min} \lambda(\theta,0) =  \lambda(\theta_0, 0) < 0.
\end{equation}

\begin{lemma}\label{lemmamu}
Under the assumptions \eqref{H1Chap5}--\eqref{assumption}, we have
\[\mu_\varepsilon \underset{ \varepsilon \rightarrow 0 }{\longrightarrow } \lambda(\theta_0, 0).\]
\end{lemma}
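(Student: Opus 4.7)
The plan is to prove a matching lower bound and upper bound for $\mu_\varepsilon$, both relying on the variational characterization of principal eigenvalues. Using that $K$ is symmetric, the nonlocal operator satisfies $\int_\Omega \phi\, L\phi\, dx = \tfrac12 \iint_{\Omega^2} (\phi(x)-\phi(y))^2 K(x-y)\,dx\,dy \geq 0$, so after integration by parts the quadratic form associated with \eqref{vpchap52} is
\[
B_\varepsilon[\phi] = \int_{\Omega\times ]-A,A[} \Bigl( |\partial_x \phi|^2 + \varepsilon^2 |\partial_\theta \phi|^2 + \phi\, L\phi - R\,\phi^2 \Bigr)\, dx\, d\theta,
\]
and $\mu_\varepsilon = \inf\{ B_\varepsilon[\phi] : \phi \in H^1(\Omega\times ]-A,A[),\ \|\phi\|_2 = 1\}$. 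Similarly, $\lambda(\theta,0)$ is the infimum of the corresponding $\theta$-slice quadratic form.

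For the lower bound, I would simply drop the nonnegative $\theta$-derivative term. For any admissible $\phi$, Fubini and the variational characterization of $\lambda(\theta,0)$ on each slice give
\[
B_\varepsilon[\phi] \geq \int_{-A}^{A} \!\!\int_\Omega \!\Bigl( |\partial_x \phi|^2 + \phi\, L\phi - R(x,\theta)\phi^2 \Bigr)\, dx\, d\theta \geq \int_{-A}^{A} \lambda(\theta,0) \|\phi(\cdot,\theta)\|^2_{L^2(\Omega)}\, d\theta \geq \lambda(\theta_0,0) \|\phi\|_2^2,
\]
using \eqref{assumption}. Hence $\mu_\varepsilon \geq \lambda(\theta_0, 0)$ for every $\varepsilon > 0$.

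The upper bound is the substantive part. Fix $\eta > 0$ small enough that $[\theta_0-\eta,\theta_0+\eta] \subset ]-A,A[$, and pick a fixed $\chi \in C_c^\infty(\theta_0-\eta, \theta_0+\eta)$ with $\|\chi\|_{L^2}=1$. Use the tensor test function $\phi(x,\theta) := \psi^{\theta_0}(x)\, \chi(\theta)$, where $\psi^{\theta_0}$ is the normalized principal eigenfunction of \eqref{vpchap5} at $\theta = \theta_0$, $\rho = 0$. Crucially, this avoids any need for $\theta$-regularity of $\theta\mapsto \psi^\theta$. A direct calculation using $\|\psi^{\theta_0}\|_{L^2(\Omega)}=1$ and the eigenvalue equation gives
\[
B_\varepsilon[\phi] = \int_{-A}^{A} \chi(\theta)^2 \Bigl( \lambda(\theta_0,0) + \int_\Omega [R(x,\theta_0) - R(x,\theta)](\psi^{\theta_0}(x))^2\, dx \Bigr) d\theta + \varepsilon^2 \|\chi'\|_{L^2}^2.
\]
By \eqref{H2Chap5}, $|R(x,\theta_0)-R(x,\theta)| \leq C_R \eta$ on the support of $\chi$, so $B_\varepsilon[\phi] \leq \lambda(\theta_0,0) + C_R \eta + \varepsilon^2 \|\chi'\|_{L^2}^2$, while $\|\phi\|_2 = 1$. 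Thus $\mu_\varepsilon \leq \lambda(\theta_0,0) + C_R \eta + O(\varepsilon^2)$. Sending first $\varepsilon \to 0$ and then $\eta \to 0$ yields $\limsup_{\varepsilon\to 0} \mu_\varepsilon \leq \lambda(\theta_0, 0)$, which together with the lower bound proves the lemma.

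The main potential obstacle is ensuring the variational characterization of $\mu_\varepsilon$ really holds for the nonlocal operator with Neumann boundary conditions; this is straightforward because $L$ is bounded, self-adjoint, and nonnegative on $L^2$, so the full operator is a bounded perturbation of the Neumann Laplacian and standard spectral theory applies. A minor point is that the assumption $\theta_0 \in ]-A,A[$ (strictly interior) in \eqref{assumption} is exactly what allows the bump $\chi$ to have compact support inside $]-A,A[$, which in turn makes the construction robust without having to track boundary effects in $\theta$.
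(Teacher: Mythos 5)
Your proof is correct, but it takes a genuinely different route from the paper's. The paper proves Lemma \ref{lemmamu} (in Appendix B) by the same Hopf--Cole/Hamilton--Jacobi machinery it develops for Theorem \ref{main}: it sets $v_\varepsilon = \varepsilon \ln(\xi_\varepsilon)$, establishes Lipschitz and $L^\infty$ bounds on $v_\varepsilon$ mirroring those of Theorem \ref{Regularitythm}, passes to a viscosity limit $v$ solving a constrained Hamilton--Jacobi equation $-|\partial_\theta v|^2 = -\lambda(\theta,-\mu)$ with $\max v = 0$, uses the shift identity $\lambda(\theta,-\mu) = \lambda(\theta,0) - \mu$, and reads off $\mu = \min_\theta \lambda(\theta,0) = \lambda(\theta_0,0)$ from the zero set of $v$. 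Your argument, by contrast, is purely variational: the lower bound drops the nonnegative $\varepsilon^2|\partial_\theta\phi|^2$ term and fibers the Rayleigh quotient over $\theta$-slices via Fubini, giving the (stronger, $\varepsilon$-uniform) inequality $\mu_\varepsilon \geq \lambda(\theta_0,0)$ for \emph{every} $\varepsilon>0$; the upper bound uses the tensor test function $\psi^{\theta_0}(x)\chi(\theta)$, exploiting the Lipschitz bound on $R$ from \eqref{H2Chap5} to control the error $C_R\eta$ and the $\varepsilon\to 0$ then $\eta\to 0$ ordering to absorb the $\varepsilon^2\|\chi'\|_{L^2}^2$ cost. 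Your approach is shorter, more elementary, and entirely self-contained — it avoids the Hopf--Cole transform, viscosity solutions, and the Lipschitz/Harnack estimates altogether, and the explicit tensor test function sidesteps any need for regularity of $\theta\mapsto\psi^\theta$. The paper's approach, while heavier, is natural in context since that machinery is being built anyway, and it produces additional information about the eigenfunction $\xi_\varepsilon$ (e.g.\ that $\varepsilon\ln\xi_\varepsilon$ converges to a constrained HJ solution) which is reusable. Both proofs correctly invoke the Rayleigh quotient for the self-adjoint nonnegative nonlocal operator as in \eqref{RayleighQuotient}, and both rely on the compactness of $[-A,A]$ and on $\theta_0$ being interior as required by \eqref{assumption}.
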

It follows obviously that 
\begin{equation}\label{H5}
  \exists \varepsilon_0>0, \ \forall \varepsilon \in ]0, \varepsilon_0[, \quad   \mu_\varepsilon < \frac{\lambda(\theta_0, 0) }{2} < 0.
\end{equation} 
We postpone the proof of Lemma \ref{lemmamu} to the Appendix B and we make the hypothesis that \eqref{H5} holds true. 

\subsection{The results and the strategy}
\label{subsec:result}

First, we prove the following theorem which provides conditions for existence or non-existence of a solution of \eqref{E} for all small value $\varepsilon$.
\begin{theorem}\label{existence}
Under the assumptions \eqref{H1Chap5}--\eqref{assumption}, for all $\varepsilon \in ]0, \varepsilon_0[ $, there exists a non-trivial positive bounded solution $n_\varepsilon$ of \eqref{E}. If  Assumption \eqref{assumption} does not hold and $\lambda(\theta_0, 0)>0$, then there exists $\varepsilon_0>0$ small enough such that for all $\varepsilon < \varepsilon_0$, there does not exist a positive   solution $n_\varepsilon$ to \eqref{E}.
\end{theorem}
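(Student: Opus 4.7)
The plan splits the theorem into an existence part (under \eqref{assumption}) and a non-existence part (when $\lambda(\theta_0,0)>0$), treated by quite different arguments.

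For existence, I would use the classical sub-/super-solution method adapted to the integro-differential setting. A supersolution is provided by a large constant $\overline{n} \equiv M$: since $L$ and the derivatives annihilate constants, the equation reduces to $0 \ge M(R-2AM)$, which holds as soon as $M \ge \|R\|_\infty/(2A)$. A subsolution is furnished by a small multiple of the principal eigenfunction of \eqref{vpchap52}, namely $\underline{n}_\e := \delta \xi_\e$. Plugging into \eqref{E} and using the eigenvalue relation, the residual equals $\delta \xi_\e\bigl(\mu_\e + \delta \int_{-A}^{A}\xi_\e\,d\theta\bigr)$, which is $\le 0$ for $\delta>0$ small thanks to \eqref{H5} and the $L^\infty$ bound on $\xi_\e$ (standard elliptic regularity applied to \eqref{vpchap52}). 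For $\delta$ small we also have $\underline{n}_\e \le \overline{n}$. It then remains to construct a solution between these two functions. I would do this through monotone iteration: rewrite $Ln = M_K(x)n - Tn$ with $M_K(x) = \int_\Omega K(x-y)\,dy$ and $Tn(x,\theta) = \int_\Omega K(x-y)n(y,\theta)\,dy$, and iterate
\begin{equation*}
\bigl[-\e^2\partial_{\theta\theta} - \partial_{xx} + M_K(x) + \Lambda\bigr] n_{k+1} = T n_k + n_k\bigl(R - \rho_k + \Lambda\bigr),
\end{equation*}
with $\Lambda > 0$ chosen so that $u\mapsto u(R-\int u\,d\theta + \Lambda)$ is nondecreasing on $[0,M]$. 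Starting from $\underline{n}_\e$, the sequence is monotone increasing, bounded above by $\overline{n}$, and its limit is a classical solution of \eqref{E} by elliptic regularity, nontrivial since it dominates $\delta\xi_\e$.

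For non-existence under $\lambda(\theta_0,0) > 0$, Lemma \ref{lemmamu} gives $\mu_\e > 0$ for all small $\e$. If $n_\e$ is a positive solution, I would test \eqref{E} against $\xi_\e$ and integrate over $\Omega \times ]-A,A[$. The Neumann conditions on both sides justify integration by parts for the local diffusion terms, while the symmetry $K(x-y)=K(y-x)$ yields $\int \xi_\e Ln_\e = \int n_\e L\xi_\e$ by Fubini. Combining these with \eqref{vpchap52} produces
\begin{equation*}
\mu_\e \int_{\Omega \times ]-A,A[} n_\e \xi_\e\,dx\,d\theta \;=\; -\int_{\Omega \times ]-A,A[} n_\e \rho_\e \xi_\e\,dx\,d\theta,
\end{equation*}
which is a contradiction since $n_\e, \rho_\e, \xi_\e > 0$ makes the right-hand side strictly negative and the left-hand side strictly positive.

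The main obstacle I anticipate is ensuring that the monotone iteration scheme converges in a sufficiently strong sense despite the nonlocal term $L$: the standard local-elliptic regularity theory applies to the operator on the left once we absorb $L$ into the right-hand side via the $M_K(x)$ splitting, but one must check that $Tn_k$ stays uniformly in a function space compatible with Schauder/$L^p$ theory on $\Omega \times ]-A,A[$. The Lipschitz hypotheses on $K$ and $R$ in \eqref{K}--\eqref{H2Chap5} should make this routine, and then a standard bootstrap delivers the $C^2$ regularity needed to call $n_\e$ a classical solution.
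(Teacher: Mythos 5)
Your non-existence argument is exactly the classical one the paper invokes without detailing, and it is correct: testing \eqref{E} against $\xi_\e$, using the symmetry of $K$ and Neumann conditions to move the operator onto $\xi_\e$, and applying \eqref{vpchap52} indeed yields $\mu_\e\int n_\e\xi_\e = -\int n_\e\rho_\e\xi_\e$, a contradiction once $\mu_\e>0$. Your sub-/super-solution pair is also valid (the constant $M$ and $\delta\xi_\e$, with $\delta$ allowed to depend on $\e$).

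However, the monotone iteration you propose for the existence part has a genuine gap, and it is precisely the reason the paper resorts to a topological degree homotopy rather than to the sub-/super-solution machinery. The obstruction is the nonlocal competition term: no choice of $\Lambda>0$ makes the map
$n \mapsto n(x,\theta)\bigl(R(x,\theta)-\rho(x)+\Lambda\bigr)$ with $\rho(x)=\int_{-A}^{A}n(x,\theta')\,d\theta'$
monotone with respect to the pointwise order on functions of $(x,\theta)$. Concretely, if you increase $n$ only near some $\theta_1\ne\theta$, then $\rho(x)$ increases while $n(x,\theta)$ is unchanged, so $n(x,\theta)(R-\rho+\Lambda)$ strictly \emph{decreases} at $(x,\theta)$; the extra $\Lambda n$ does not help because $n(x,\theta)$ did not change. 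Hence, starting from the subsolution, the sequence $(n_k)$ produced by your scheme is not monotone, and there is no comparison principle forcing it to remain sandwiched between $\underline n_\e$ and $\overline n$. (The splitting $L=M_K-T$ is fine; $T$ is order-preserving. It is the reaction term that breaks down.) This is a structural difficulty of integral, mean-field competition: it is anti-monotone in the sense just described, which is why the paper follows Lam--Lou and uses a Leray--Schauder degree argument along the homotopy \eqref{Etau} interpolating between the local competition $n^2$ and the nonlocal one $\rho n$, with the required a priori $L^1$ bounds derived from the eigenpair $(\mu_\e,\xi_\e)$. To repair your argument you would need either such a degree/fixed-point framework (for which your barriers can still supply useful bounds), or a coupled upper-lower solution scheme specifically designed for nonmonotone nonlocal reactions; the single monotone iteration as written does not converge to a solution.
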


We expect indeed that in a dynamic version of \eqref{E}, the solution would converge in long time to a nontrivial stationary solution, that is a solution to \eqref{E}, when such non-trivial steady state exists and to $0$ otherwise. Admitting such property, the theorem above provides us with conditions of survival and extinction of the population. The survival condition means indeed that there exists at least one trait $\theta $ such that   $ \lambda(\theta, 0)<0$, so that such trait is viable in absence of competition.   

The proof of Theorem \ref{existence} follows the one of Theorem 2.1 by Lam and Lou in \cite{LamLou} which treats the case of local diffusion. This proof relies on a topological degree argument. In section \ref{sec:existence}, we provide the additional arguments which allow to adapt the proof of \cite{LamLou} to the non-local operator $L$.

Next, we perform the Hopf-Cole transformation 
\begin{equation}\label{H-C}
n_\varepsilon (x, \theta) = e^{\frac{u_\varepsilon (x, \theta)}{\varepsilon}}.
\end{equation}
This is the usual first step in the Hamilton-Jacobi approach (see \cite{DJMP, BP, BMP}). The main idea in this approach is to first study the limit of $u_\varepsilon $ as $\varepsilon \rightarrow 0$, and next obtain from this limit, information on the limit of the phenotypic density $n_\varepsilon$. The advantage of this transformation is that the limit of $u_\varepsilon$ is usually a continuous function which solves a Hamilton-Jacobi equation, while the limit of $n_\varepsilon$ is a measure. Performing such a change of variable, we find that $u_\varepsilon$ is solution to
\begin{equation} \label{EHCchap5} \tag{$E_{HC}$}
\left\lbrace 
\begin{aligned}
& -\frac{1}{\varepsilon} \partial_{xx} u_\varepsilon  - \frac{|\partial_x u_\varepsilon|^2}{\varepsilon^2}- \varepsilon \partial_{\theta \theta} u_\varepsilon  - |\partial_\theta u_\varepsilon |^2\\
& \   \hspace{2cm}    + \int_{\Omega} \left[ 1- e^\frac{u_\varepsilon (y) - u_\varepsilon(x)}{\varepsilon} \right] K(x-y) dy   = R(x, \theta) - \rho_\varepsilon(x) \quad &&\text{ in }  \   \Omega \times ]-A, A[, \\
&\rho_\varepsilon (x) = \int_{]-A,A[} n_\varepsilon(x,\theta) d \theta && \text{ in } \ ]-A,A[, \\
&\partial_{\nu_x} u_\varepsilon (x, \theta )= 0 \text{ on } \partial \Omega \times ]-A,A[, \quad \partial_{\nu_\theta} u_\varepsilon (x, \pm A) = 0  \text{ in } \Omega . 
\end{aligned}
\right.
\end{equation}
We prove the following
\begin{theorem}\label{main}
Under the assumptions \eqref{H1Chap5}--\eqref{assumption}, as $\varepsilon \rightarrow 0$ along subsequences, there holds
\begin{enumerate}
    \item $\rho_\varepsilon$ converges uniformly to a function $\rho \in C^1(\Omega)$ with 
    \[0<c \leq \rho \leq C,\]
    \item $u_\varepsilon$ converges uniformly to a continuous function $u$ with $u$ a viscosity solution of 
    \begin{equation}\label{HJChap5}
    \left\lbrace
    \begin{aligned}
    &- | \partial_\theta u(\theta) |^2=-\lambda(\theta, \rho) ,\\
    &\max \ u(\theta) = 0, \\
    & \partial_{\nu_\theta} u(\pm A)  = 0,
    \end{aligned}
    \right.
    \end{equation}
    where $\lambda(\theta, \rho)$ is the principal eigenvalue introduced in \eqref{vpchap5}.
    Moreover, the limit $u$ depends only on $\theta$.  
    \item $n_\varepsilon$ converges to a measure $n$ in the sense of measures. Moreover, 
    \begin{equation}
    \label{inclusion}
    \mathrm{supp } \  n \subset \Omega\times \left\lbrace u(\theta) = 0 \right\rbrace \subset \Omega \times \left\lbrace \lambda(\theta,\rho)=0\right\rbrace.
    \end{equation}
\end{enumerate}
\end{theorem}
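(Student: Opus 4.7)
The plan is to follow the Hamilton--Jacobi approach of \cite{BP,BMP,DJMP,Mirrahimi17}, the new ingredient being that the nonlocal operator $L$ must be absorbed together with the singular $x$-derivative terms via the eigenvalue problem \eqref{vpchap5}. I proceed in three main steps corresponding to the three parts of the statement, and identify the principal technical obstacle.

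\textbf{Step 1 (uniform bounds on $\rho_\varepsilon$).} For the upper bound I test \eqref{E} against $n_\varepsilon$ and integrate over $\Omega\times\,]-A,A[$: the gradient term and the nonlocal term $\tfrac{1}{2}\int\!\int(n_\varepsilon(x,\theta)-n_\varepsilon(y,\theta))^2K(x-y)\,dx\,dy\,d\theta$ are both nonnegative, so $\int R\,n_\varepsilon^2\geq\int\rho_\varepsilon n_\varepsilon^2$, and Cauchy--Schwarz in $\theta$ yields an $L^2$-bound on $\rho_\varepsilon$ which bootstraps to $L^\infty$ via pointwise inspection of \eqref{E}. For the lower bound I test \eqref{E} against $\xi_\varepsilon$: the identity $\int\rho_\varepsilon n_\varepsilon\xi_\varepsilon = -\mu_\varepsilon\int n_\varepsilon\xi_\varepsilon$ together with \eqref{H5} forces $\int n_\varepsilon\xi_\varepsilon$ to stay bounded below by a positive constant, and a Harnack inequality for the elliptic--nonlocal operator $-\partial_{xx}-\varepsilon^2\partial_{\theta\theta}+L$ with bounded lower-order terms then propagates this into a pointwise lower bound on $\rho_\varepsilon$.

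\textbf{Step 2 (regularity of $u_\varepsilon$ and $x$-independence of the limit).} From the $L^\infty$ bounds, $u_\varepsilon$ is locally bounded and Lipschitz in $\theta$ with an $\varepsilon$-independent constant, via a maximum principle argument applied to \eqref{EHCchap5}. The decisive point is that $\mathrm{osc}_x u_\varepsilon \to 0$: any $x$-variation of order larger than $\varepsilon$ would make the term $\int[1-e^{(u_\varepsilon(y)-u_\varepsilon(x))/\varepsilon}]K(x-y)\,dy$ blow up, contradicting the bounded right-hand side of \eqref{EHCchap5}. To quantify this I set $w_\varepsilon:=n_\varepsilon/\psi^\theta$; using \eqref{vpchap5}, $w_\varepsilon$ satisfies an equation whose elliptic--nonlocal operator in $x$ is nondegenerate uniformly in $\varepsilon$ with bounded source, giving $\mathrm{osc}_x \log w_\varepsilon=O(\varepsilon)$, i.e.\ $u_\varepsilon(x,\theta)=u(\theta)+\varepsilon\log\psi^\theta(x)+o(1)$ along a subsequence, with $\rho_\varepsilon\to\rho$ uniformly and $\rho\in C^1$ by standard elliptic regularity for \eqref{vpchap5}.

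\textbf{Step 3 (viscosity HJ equation, constraint, and measure convergence).} To verify that $u$ is a viscosity solution of \eqref{HJChap5} I use Evans' perturbed test function: for $\varphi(\theta)$ touching $u$ from above at $\theta^*$, set $\varphi_\varepsilon(x,\theta):=\varphi(\theta)+\varepsilon\log\psi^\theta(x)$, localize at a maximum of $u_\varepsilon-\varphi_\varepsilon$, and substitute into \eqref{EHCchap5}. The singular $x$-derivative terms together with the nonlocal integral collapse via the identity $-\partial_{xx}\psi^\theta/\psi^\theta+L\psi^\theta/\psi^\theta-R(x,\theta)+\rho(x)=\lambda(\theta,\rho)$ (which is \eqref{vpchap5} divided by $\psi^\theta$), cancelling the $R(x,\theta)-\rho(x)$ on the right-hand side of \eqref{EHCchap5} and leaving $-|\varphi'(\theta^*)|^2\leq-\lambda(\theta^*,\rho)$ modulo $o(1)$; the supersolution case is symmetric. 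The constraint $\max u=0$ follows from Step 1: $\max u>0$ would force $\rho_\varepsilon\to\infty$ at some point and $\max u<0$ would force $\rho_\varepsilon\to 0$, both excluded. Finally, the uniform $L^1$ bound on $n_\varepsilon$ yields weak-$*$ convergence to a measure, and \eqref{inclusion} follows since $n_\varepsilon=e^{u_\varepsilon/\varepsilon}$ decays exponentially on $\{u<0\}$, while the HJ equation pins $\{u=0\}\subset\{\lambda(\theta,\rho)=0\}$.

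\textbf{Main obstacle.} The principal technical difficulty is the nonlocal integral $\int_\Omega[1-e^{(u_\varepsilon(y)-u_\varepsilon(x))/\varepsilon}]K(x-y)\,dy$: it is neither bounded by $C^2$-norms of $u_\varepsilon$ nor trivially vanishing in the limit, and an $O(\varepsilon)$ error in identifying the $x$-profile of $u_\varepsilon$ produces an $O(1)$ error in the integrand. Handling it requires pairing it precisely with $L\psi^\theta/\psi^\theta$ via the eigenvalue equation \eqref{vpchap5}---the mechanism by which $\lambda(\theta,\rho)$ enters the Hamilton--Jacobi limit---and establishing quantitative $\varepsilon$-uniform $x$-regularity of the corrector $u_\varepsilon-\varepsilon\log\psi^\theta$, neither of which is needed in the local-diffusion analyses of \cite{Mirrahimi17,PerthameSouganidis}.
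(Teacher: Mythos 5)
Your high-level architecture matches the paper's exactly: Hopf--Cole transformation, a priori regularity estimates, Arzel\`a--Ascoli, and then a perturbed test function argument where the corrector $\varepsilon\log\psi^\theta$ cancels the singular $x$-derivative and nonlocal terms through the eigenvalue equation \eqref{vpchap5}. You also correctly isolate the main technical obstacle (the exponential nonlocal integral) and the way it must be paired with $L\psi^\theta/\psi^\theta$. The constraint and the inclusion properties in Step 3 are treated as in the paper (with a minor caveat that the bound $\sup\rho_\varepsilon\ge c$ obtained by testing against $\xi_\varepsilon$ is what one actually gets, not a lower bound on $\int n_\varepsilon\xi_\varepsilon$).

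The genuine gap is in Step 2, which is precisely where the paper's technical contribution lies. Your route to the crucial estimates $|\partial_x u_\varepsilon|\le C\varepsilon$ and $|\partial_\theta u_\varepsilon|\le C$ (item 2 of Theorem \ref{Regularitythm}) is not rigorous as written. The ``blow-up'' heuristic for $\mathrm{osc}_x u_\varepsilon$ fails because \emph{all} the $x$-derivative terms in \eqref{EHCchap5} -- $-\partial_{xx}u_\varepsilon/\varepsilon$, $-|\partial_x u_\varepsilon|^2/\varepsilon^2$, \emph{and} the nonlocal integral -- are singular if $\partial_x u_\varepsilon=O(1)$, so one cannot isolate the nonlocal term and declare a contradiction; they could a priori cancel each other. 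Likewise, the alternative via $w_\varepsilon=n_\varepsilon/\psi^\theta$ does not have the claimed ``bounded source'': the rewritten equation retains a term $\varepsilon^2\partial_{\theta\theta}(w_\varepsilon\psi^\theta)/\psi^\theta$ whose size you have no a priori control over, and the nonlocal operator does not factor as $\psi^\theta L(w_\varepsilon)+w_\varepsilon L(\psi^\theta)$ (there is a cross-term $\Lambda(w_\varepsilon,\psi^\theta)$), so the claim $\mathrm{osc}_x\log w_\varepsilon=O(\varepsilon)$ is unjustified. The paper instead runs Bernstein's method on $p_\varepsilon=|\partial_x u_\varepsilon|^2/\varepsilon^2+|\partial_\theta u_\varepsilon|^2$, and the indispensable ingredient in bounding the right-hand side \eqref{RHS} is the Harnack inequality for $n_\varepsilon$ over $\theta$-slabs of width $\varepsilon$ (item 1 of Theorem \ref{Regularitythm}), proved via a rescaling $\theta\mapsto\varepsilon\theta$, the classical inhomogeneous Harnack inequality treating $\int n_\varepsilon(y,\cdot)K(\cdot-y)\,dy$ as a source, and a Krylov-type estimate to close the loop. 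You mention a Harnack inequality in passing but use it for a different purpose (propagating the lower bound on $\rho_\varepsilon$, where the paper actually uses the Lipschitz bound $|\partial_x u_\varepsilon|\le C\varepsilon$ instead), and you never identify this slab-Harnack inequality as the key to controlling $e^{(u_\varepsilon(y)-u_\varepsilon(x))/\varepsilon}$ in Bernstein's argument. Without that estimate, the convergence and the $x$-independence of the limit in item 2 are not established.
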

The theorem above allows us to characterize the phenotypic density $n$, at the limit as $\varepsilon\to 0$, via the Hamilton-Jacobi equation with constraint \eqref{HJChap5} coupled with the eigenvalue problem \eqref{vpchap5} and the inclusion properties \eqref{inclusion}. We expect indeed that $n$ would be a sum of Dirac masses in $\theta$ as follows:
$$
n(x,\theta)=\sum_{i=1}^d \rho_i(x) \delta(\theta-\theta_i).
$$
In section \ref{sec:qual}, we will use the information obtained above to characterize the phenotypic density $n$ in some particular situations. On the one hand, we will identify a situation where the phenotypic density at the limit will be a single Dirac mass in $\theta$ corresponding to a monomorphic population. On the other hand, we will show that a strong fragmentation of the environment will   lead to polymorphic situations.
 
 \bigskip

Let us present briefly below the main ingredients to prove \eqref{HJChap5}.   We first provide  heuristic arguments to understand how the Hamilton-Jacobi equation in \eqref{HJChap5} can be recovered. To this end, we perform asymptotic developments of $u_\varepsilon$ and $\rho_\varepsilon$ with respect to the powers of $\varepsilon$
\[ \text{ i.e. }\quad u_\varepsilon (x, \theta) = u_0(x, \theta) + \varepsilon u_1(x, \theta)  +  o(\varepsilon) \quad \text{ and } \quad \rho_\varepsilon (x) = \rho_0(x) + o_\varepsilon(1).\]
Next, we implement such asymptotic developments into \eqref{EHCchap5}, to obtain
\begin{align*}
    &\frac{1}{\varepsilon} \left( -\partial_{xx} u_0 -2|\partial_{x}u_0 \partial_x u_1| -  \frac{|\partial_{x} u_0|^2}{\varepsilon} \right) + \int_\Omega [1-e^{\frac{u_0(y, \theta) - u_0(x, \theta)}{\varepsilon} + u_1(y, \theta) - u_1(x, \theta) + o_\varepsilon(1)}]K(x-y)dy \\
    &-\partial_{xx} u_1 - |\partial_x u_1|^2 - |\partial_\theta u_0|^2 - [R - \rho_0] + o_\varepsilon(1) = 0.
\end{align*}
We then organize the equation by powers of $\e$. Keeping the terms of order $\e^{-2}$, we find
\[ \partial_x u_0(x,\theta)=0 \quad \Rightarrow  \quad u_0(x, \theta) = u_0(\theta).\]
Moreover, keeping the terms of order $\e^0$, we deduce that
\begin{equation}\label{IdeeFormelle2}
-|\partial_\theta u_0 (\theta)  |^2 = [R(x, \theta) - \rho_0(x) ] + \partial_{xx} u_1(x, \theta) + |\partial_x u_1(x,\theta)|^2 - \int_{\Omega}[1-e^{u_1(y, \theta) - u_1(x, \theta) }]K(x-y)dy.
\end{equation}
Setting $\widetilde \psi=\exp(u_1)$ and replacing in the equation above we obtain that
$$
- \partial_{xx} \widetilde \psi + L(\widetilde \psi)-  [R(x, \theta) - \rho_0(x) ] \widetilde \psi= |\partial_\theta u_0 (\theta)  |^2 \widetilde \psi.
$$
Since $-|\partial_\theta u_0 (\theta)  |^2$ does not depend   on $x$ and $\widetilde \psi$ is a positive function, the equation above suggests that $\widetilde \psi$ and $-|\partial_\theta u_0 (\theta)  |^2 $ are respectively the   principal eigenfunction and eigenvalue introduced in \eqref{vpchap5}. This leads in particular   to the Hamilton-Jacobi equation in \eqref{HJChap5}.

From a technical point of view, the convergence of $(u_\varepsilon)_{\varepsilon > 0}$ is proved using the Arzela-Ascoli Theorem and a perturbed test function argument (see \cite{Evans_visco_1}).   In order to apply the Arzela-Ascoli Theorem, we provide first some regularity estimates on $u_\e$. We prove in particular, using the Bernstein's method, that the first derivatives are bounded. These bounds rely on the establishment of Harnack type inequalities. 
More precisely, we prove the following regularity results on $u_\varepsilon$.
\begin{theorem}\label{Regularitythm}
Under assumptions \eqref{H1Chap5}--\eqref{assumption}, the following results hold true.
\begin{enumerate}
\item $\mathrm{[Harnack \  inequality]}  \quad $  There exists a constant $C>0$ (independent of the choice of $\varepsilon$) such that for all intervals $I \subset ]-A,A[$ with $|I| = \varepsilon$, there holds
\begin{equation}
\label{Harnack}
    \underset{(x,\theta) \in \Omega \times I}{\sup} n_\varepsilon (x, \theta) \leq C \ \underset{(x,\theta) \in \Omega \times I}{\inf} n_\varepsilon (x,\theta).
\end{equation}
\item $\mathrm{[Lipschitz \ bounds]} \quad $ There exists $C>0$ such that for all $\varepsilon$ small enough,
\begin{equation}\label{Lipeq}
    |\partial_x u_\varepsilon | \leq C \varepsilon \quad \text{ and } \quad |\partial_\theta u_\varepsilon | < C.
\end{equation}
\item $\mathrm{[Bounds \ on \ \rho_\varepsilon]} \quad $For all $\varepsilon$ small enough, $\rho_\varepsilon$ is uniformly bounded in $W^{2,p}(\Omega)$ for all $p \in [1, +\infty]$. Moreover, there exists $c,C>0$ (independent of the choice of $\varepsilon$) such that 
\begin{equation}\label{rhoin}
c \leq \rho_\varepsilon \leq C.
\end{equation}
\item $\mathrm{[Bounds \ on \ u_\varepsilon]} \quad $The following holds true
\begin{equation}\label{boundueps}
\underset{ \varepsilon \rightarrow 0}{\lim} \  \underset{(x,\theta) \in \Omega \times ]-A,A[}{\sup} u_\varepsilon \leq 0 \quad \text{ and } \quad -a < \underset{ \varepsilon \rightarrow 0}{\lim} \  \underset{(x,\theta) \in \Omega \times ]-A,A[}{\inf}  u_\varepsilon
, \end{equation}
with $a>0$.
\end{enumerate}
\end{theorem}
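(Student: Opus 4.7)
The four estimates depend on one another and are to be proved in the order (3 upper)$\to$(1)$\to$(3 lower)$\to$(2)$\to$(4). The upper bound on $\rho_\e$ comes first by integrating \eqref{E} in $\theta$: by Neumann conditions, the mutation term drops and $\rho_\e$ solves
\begin{equation*}
-\partial_{xx}\rho_\e + L\rho_\e = \int_{-A}^{A} n_\e R\,d\theta - \rho_\e^2.
\end{equation*}
At a maximum of $\rho_\e$ on $\overline\Omega$, both $-\partial_{xx}\rho_\e$ and $L\rho_\e$ are $\geq 0$ while the right-hand side is $\leq \rho_\e(C_R - \rho_\e)$, forcing $\rho_\e \leq C_R$; standard elliptic regularity for the equation above then gives the $W^{2,p}$ bound.

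\textbf{Harnack inequality.} Split $L n_\e = (\int_\Omega K)\,n_\e - \int_\Omega K(x-y)\,n_\e(y,\theta)\,dy$: the first piece is a bounded zero-order coefficient and the second (nonnegative) is moved to the right-hand side, where it is $L^\infty$-bounded using the $\rho_\e$-bound just proved. Rescaling $\tilde\theta = \theta/\e$ turns \eqref{E} into a uniformly elliptic equation in $(x,\tilde\theta)$ with bounded coefficients on boxes of unit size; the De Giorgi--Nash--Moser Harnack inequality then gives $\sup \tilde n_\e \leq C\,\inf \tilde n_\e$, which is \eqref{Harnack} upon pulling back.

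\textbf{Lower bound on $\rho_\e$ and Lipschitz estimates.} Multiplying \eqref{E} by the eigenfunction $\xi_\e$, integrating over $\Omega \times ]-A,A[$, and using the self-adjointness of $-\partial_{xx} - \e^2 \partial_{\theta\theta} + L$ under Neumann conditions (valid by $K(x-y)=K(y-x)$) together with \eqref{vpchap52}, one finds
\begin{equation*}
-\mu_\e \int n_\e \xi_\e \,dx\,d\theta = \int n_\e \xi_\e \rho_\e \,dx\,d\theta \leq \|\rho_\e\|_\infty \int n_\e \xi_\e \,dx\,d\theta,
\end{equation*}
so $\|\rho_\e\|_\infty \geq -\mu_\e \geq |\lambda(\theta_0,0)|/2 > 0$ by \eqref{H5}; a Harnack inequality applied to $\rho_\e$ as an elliptic function with bounded source upgrades this to the uniform pointwise lower bound $c\leq \rho_\e$. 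The Lipschitz estimates of part 2 follow from Bernstein's method applied to \eqref{EHCchap5}: differentiating in $\theta$, multiplying by $\partial_\theta u_\e$, and evaluating at an interior maximum of $v = |\partial_\theta u_\e|^2$, the local diffusion terms contribute with favorable signs while the differentiated nonlocal operator produces a term proportional to $\int_\Omega (\partial_\theta u_\e(y) - \partial_\theta u_\e(x))\,\frac{n_\e(y,\theta)}{n_\e(x,\theta)}\,K(x-y)\,dy$; the ratio $n_\e(y,\theta)/n_\e(x,\theta)$ is controlled by \eqref{Harnack} (applied with fixed $\theta$), allowing absorption and yielding $\|\partial_\theta u_\e\|_\infty \leq C$. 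An analogous Bernstein argument on $|\partial_x u_\e|^2$, exploiting the $\e^{-2}$ prefactor of $|\partial_x u_\e|^2$ in \eqref{EHCchap5} to extract an extra power of $\e$, gives $|\partial_x u_\e|\leq C\e$.

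\textbf{Bounds on $u_\e$ and main obstacle.} For the upper bound in part 4, if $u_\e(x_\e,\theta_\e)\geq c>0$ along a subsequence $\e\to 0$, then \eqref{Harnack} forces $n_\e(x_\e,\cdot) \geq e^{c/\e}/C$ on an interval of length $\e$ around $\theta_\e$, so $\rho_\e(x_\e) \gtrsim \e\, e^{c/\e} \to \infty$, contradicting the upper bound on $\rho_\e$. For the lower bound, $\rho_\e \geq c$ forces some $\theta_\e^\ast$ with $n_\e(x,\theta_\e^\ast) \geq c/(2A)$, hence $u_\e(x,\theta_\e^\ast) \geq \e \log(c/(2A))$; the Lipschitz estimate then propagates this to $u_\e(x,\theta) \geq -2AC + o(1)$ uniformly, giving \eqref{boundueps} with $a = 2AC+1$. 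The delicate step in the whole argument is the Bernstein computation for part 2 in the presence of $L$: the differentiated nonlocal term contains a factor $e^{(u_\e(y)-u_\e(x))/\e} = n_\e(y,\theta)/n_\e(x,\theta)$ that is not a priori bounded, and controlling it requires precisely the Harnack inequality of part 1, which is why the logical order above cannot be shortcut.
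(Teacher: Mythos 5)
Your plan for the dependency structure is sound, and items (2), (4), and the upper bound of (3) track the paper reasonably closely, but there is a genuine gap in the proof of the Harnack inequality (item 1), which is precisely the hard part of this theorem.

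You claim that after splitting $Ln_\e=(\int_\Omega K)\,n_\e - \int_\Omega K(x-y)n_\e(y,\theta)\,dy$, the second integral ``is $L^\infty$-bounded using the $\rho_\e$-bound just proved,'' so that after rescaling, De Giorgi--Nash--Moser gives $\sup \tilde n_\e \le C\inf\tilde n_\e$ directly. Both halves of this are incorrect. The $\rho_\e$-bound controls $\int_{-A}^A n_\e(x,\theta)\,d\theta$ for fixed $x$, not $\int_\Omega n_\e(y,\theta)\,dy$ for fixed $\theta$; the latter is the quantity appearing in the shifted source, and it has no $\e$-uniform $L^\infty$ bound (indeed, as $n_\e$ concentrates in $\theta$, this integral blows up at the concentration traits). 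Moreover, even if the source were bounded, the Harnack inequality for an equation with a nonzero right-hand side gives only the additive estimate $\sup\tilde n_\e \le C\big(\inf\tilde n_\e + \|f\|_{L^\infty}\big)$, not the multiplicative one you need. The paper handles exactly this difficulty by establishing the nontrivial claim
\[
\sup_{\theta\in B_{2R_0}(\theta_0)}\int_\Omega \widetilde n(x,\theta)\,dx \ \le\ C\inf_{(x,\theta)\in B_{R_0}(x_0,\theta_0)}\widetilde n(x,\theta),
\]
via a two-step argument: first a one-dimensional Harnack inequality for $\theta\mapsto\int_\Omega \widetilde n(x,\theta)\,dx$ (obtained by integrating the equation in $x$ and using Neumann conditions), and then a Krylov-type inequality to pass from the spatial integral of $\widetilde n$ down to the pointwise infimum. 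None of this appears in your sketch, and without it the estimate \eqref{Harnack} does not follow.

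A secondary issue: for the lower bound on $\rho_\e$, you invoke ``a Harnack inequality applied to $\rho_\e$ as an elliptic function with bounded source.'' As written this fails for the same reason (additive vs.\ multiplicative form), and a nonlocal Harnack for $\rho_\e$ on the possibly disconnected domain $\Omega$ is itself not standard. The paper instead places (3 lower) after the Lipschitz estimates and uses $|\partial_x u_\e|\le C\e$ together with the Harnack of item 1 to propagate the lower bound on $\sup\rho_\e$ to all of $\Omega$. (Alternatively, once item 1 is available, one can integrate the Harnack estimate trait-by-trait over an $\e$-net of $]-A,A[$ to get $\rho_\e(x)\ge C^{-1}\rho_\e(x_0)$ directly; your proposed shortcut needs one of these substitutes.) Finally, for the Bernstein step, carrying out the computation separately on $|\partial_\theta u_\e|^2$ and $|\partial_x u_\e|^2/\e^2$ leaves cross terms such as $\partial_\theta(|\partial_x u_\e|^2)\,\partial_\theta u_\e$ that do not automatically vanish at the max of one quantity alone; the paper avoids this by working with the combined quantity $p_\e=|\partial_x u_\e|^2/\e^2+|\partial_\theta u_\e|^2$, whose gradient vanishes at its maximum. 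This last point is repairable, but you should be explicit about it.
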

\begin{remark}
In item \textit{1.} of Theorem \ref{Regularitythm}, the interval $I$ can be at the boundary of $]-A,A[$
\[\text{i.e.} \quad I = ]-A, -A + \varepsilon[ \quad \text{ or } \quad I = ]A-\varepsilon, A[.\]
\end{remark}

The combination of the local and the non-local diffusion terms makes the establishment of such regularity estimates non-standard (see for instance \cite{BMP} and \cite{Mirrahimi12} where such types of estimates were obtained for related models with a local diffusion term). Here, the Harnack inequality \eqref{Harnack} is used to obtain the Lipschitz regularity estimate \eqref{Lipeq}. However, the result is by itself interesting since it extends the classical Harnack inequality to elliptic operators with joint local and nonlocal diffusion terms. 

Note finally that the constraint $\max_\theta u(\theta)=0$ is a consequence of the Hopf-Cole transformation \eqref{H-C} and the fact that $\rho_\e$ remains bounded   away from $0$, uniformly in $\e$. For a detailed proof of Theorem \ref{main} we refer to Section \ref{convergence}.

\subsection{Outline of the paper}
 In section \ref{sec:qual}, we focus on the qualitative properties of the phenotypic density $n$ and show which types of qualitative conclusions we can obtain thanks to our theoretical results presented above.  In section \ref{discussion}, we investigate, by some numerical simulations,  the qualitative properties of $n$ and confirm our theoretical results.
In section \ref{sec:existence}, we provide the existence of $n_\varepsilon$ by proving Theorem \ref{existence}. Next, we prove the regularity results given by Theorem \ref{Regularitythm} in section \ref{regularity}. Section \ref{convergence} is devoted to the proof of Theorem \ref{main}. \\
 As mentioned above, Appendix A is devoted to the existence of the eigenvalues $\lambda(\theta, \rho)$. We also prove some results in Appendix A that are stated and used in section \ref{sec:qual}. Lemma \ref{lemmamu} is proved in Appendix B.  \\
The constants $c,C$ are positive constants independent of the choice of $\varepsilon$ and may change from line to line when there is no confusion possible.

\section{Qualitative properties of the population density $n$ }
\label{sec:qual}

The objective of this section is to show how the asymptotic results provided in Theorem \ref{main} imply qualitative results on the population density $n$, at the limit as $\e\to 0$. Recall from Theorem \ref{main} that as $\e\to 0$ and along subsequences, $n_\e$ tends weakly to a measure $n$ and $\rho_\e$ converges uniformly to a function $\rho$ such that
$$
\mathrm{supp}\, n \subset \Omega \times \{\theta\ |\ u(\theta)=0\}\subset \Omega\times \{\theta\ | \lambda(\theta,\rho)=\min_{\theta'} \lambda(\theta',\rho) = 0\},
$$
with $u:[-A,A]\to \R$  the viscosity to 
\[
    \left\lbrace
    \begin{aligned}
    &- | \partial_\theta u(\theta) |^2=-\lambda(\theta, \rho) \quad \theta \in ]-A,A [,\\
    &\partial_{\nu_\theta} u(\pm A)=0, \\
    &\max \ u(\theta) = 0,
    \end{aligned}
    \right.
\]
and $\lambda(\theta,\rho)$ the principal eigenvalue corresponding to the following problem:
\[
   \left\lbrace
    \begin{aligned}
    &-\partial_{xx}\psi^\theta+ L(\psi^\theta) - [R (\cdot, \theta) - \rho]\psi^\theta = \lambda(\theta, \rho)\psi^\theta && \text{ in } \Omega, \\
    & \partial_{\nu_x} \psi^\theta = 0 && \text{ in }  \partial  \Omega.
    \end{aligned}
    \right.
\]
Moreover, from the first item of Theorem \ref{Regularitythm} we deduce  that 
$$
\text{if $\theta_0 \in \mathrm{supp}\, n(x_0,\cdot)$ for some $x_0\in \Omega$,\qquad then $\theta_0 \in \mathrm{supp}\, n(x,\cdot)$ for all $x\in \Omega$.}
$$
Consequently, we have
$$
\mathrm{supp}\, n =\Omega\times \Gamma_\theta,
\qquad
\Gamma_\theta\subset \{\theta | \, u(\theta) = 0\} \subset \{\theta\ | \lambda(\theta,\rho)=\min_{\theta'} \lambda(\theta',\rho) = 0\}.
$$
Note  that it may happen that $u(\theta) = 0$ for some $\theta \in [-A,A]$ but that $\theta$ does not belong to $\Gamma_\theta$. Note also that Theorem \ref{main} guarantees convergence of $(n_\e,\rho_\e)$ to  $(n,\rho)$, only along subsequences. It does not exclude possibility of multiple limits $(n,\rho)$ as $\e\to 0$. 
\\

We expect indeed that $u$ would take its maximum at some distinct traits such that the phenotypic density $n$ would have the following form:
$$
n(x,\theta)=\sum_{i=1}^d \rho_i(x) \delta(\theta-\theta_i),\qquad \rho(x)=\sum_{i=1}^d \rho_i(x) , \qquad \rho_i(x)>0.
$$
This expectation motivates the following definitions.
\begin{definition}
\begin{itemize}
\item Any trait $\theta\in \Gamma_\theta$ is called an \texttt{emergent trait}. 
\item A population density is called \texttt{monomorphic} if the set of emergent traits, that is $\Gamma_\theta$,  is reduced to a single point.
\item A population density is called \texttt{polymorphic} if it is not monomorphic.  \\
\end{itemize}
\end{definition}
With these definitions, any monomorphic population density is a  Dirac mass with respect to $\theta$,
\[i.e. \qquad n(x,\theta)=\rho(x)\delta(\theta-\overline{ \theta}).\]

In subsection \ref{sec:unique-mono} we will show that, for a particular choice of $R$, there is at most one possible monomorphic limit $(n,\rho)$. In particular under symmetry conditions on the set $\Omega$, the only possible monomorphic outcome at the limit would be $n(x,\theta)=\rho(x)\delta(\theta)$. We next identify a situation in subsection \ref{sec:mono} where the phenotypic density $n$ is indeed monomorphic. Finally we show in subsection \ref{sec:poly} that a strong fragmentation of the environment may lead to polymorphic situations. 

\bigskip

Before providing our qualitative results, we state the following technical result on the principal eigenvalue $\lambda$ that will help us in the next subsections to obtain our qualitative results. This result is proved in Appendix A. 
\begin{proposition}\label{derivlambda}
Under the assumptions \eqref{H1Chap5}--\eqref{K}, the following identity holds true:
\begin{equation}\label{derivvp}
    \partial_{\theta} \lambda(\theta, \rho )  =  - \int_\Omega \partial_{\theta } R(x, \theta) \psi^\theta (x)^2 dx.
\end{equation} 
\end{proposition}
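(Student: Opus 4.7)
My plan is to apply the classical Feynman–Hellmann / Rellich type formula for the derivative of a simple eigenvalue of a self-adjoint operator that depends smoothly on a parameter. The three ingredients I need are: (i) self-adjointness of the operator in \eqref{vpchap5}, (ii) $C^1$ dependence of the eigenpair on $\theta$, and (iii) the $L^2$-normalization $\|\psi^\theta\|_{L^2}=1$.

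First I would check self-adjointness on $L^2(\Omega)$ of the operator
\[
A_\theta v := -\partial_{xx} v + L v - [R(\cdot,\theta)-\rho]v
\]
equipped with Neumann conditions. The Neumann Laplacian is self-adjoint by integration by parts, the multiplication operator by $R(\cdot,\theta)-\rho$ is trivially self-adjoint, and for the nonlocal term one writes
\[
\int_\Omega (Lv)\,w\,dx-\int_\Omega v\,(Lw)\,dx=\int_\Omega\!\!\int_\Omega [v(y)w(x)-v(x)w(y)]K(x-y)\,dy\,dx,
\]
and this integral vanishes by exchanging $x\leftrightarrow y$ and using the symmetry $K(x-y)=K(y-x)$ from \eqref{K}. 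So $A_\theta$ is self-adjoint, and $\lambda(\theta,\rho)$ is a simple principal eigenvalue (as established in Appendix A).

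Next I would justify smoothness of the eigenpair in $\theta$. Since $A_\theta$ depends smoothly on $\theta$ through the $C^1$ potential $R(\cdot,\theta)$, and the principal eigenvalue is simple and isolated from the rest of the spectrum, standard analytic perturbation theory (Kato) gives that $\theta\mapsto\lambda(\theta,\rho)$ and $\theta\mapsto\psi^\theta\in L^2(\Omega)$ are $C^1$. Then I differentiate the eigenvalue equation \eqref{vpchap5} with respect to $\theta$:
\[
A_\theta(\partial_\theta\psi^\theta)-\partial_\theta R(\cdot,\theta)\,\psi^\theta=\partial_\theta\lambda(\theta,\rho)\,\psi^\theta+\lambda(\theta,\rho)\,\partial_\theta\psi^\theta.
\]

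Finally I would test this identity against $\psi^\theta$ and integrate over $\Omega$. Using self-adjointness of $A_\theta$ and the eigenvalue equation,
\[
\int_\Omega\psi^\theta\, A_\theta(\partial_\theta\psi^\theta)\,dx=\int_\Omega A_\theta(\psi^\theta)\,\partial_\theta\psi^\theta\,dx=\lambda(\theta,\rho)\int_\Omega\psi^\theta\,\partial_\theta\psi^\theta\,dx,
\]
so the $\lambda$ contributions cancel and only
\[
-\int_\Omega\partial_\theta R(x,\theta)\,\psi^\theta(x)^2\,dx=\partial_\theta\lambda(\theta,\rho)\int_\Omega\psi^\theta(x)^2\,dx
\]
survives. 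The normalization $\|\psi^\theta\|_{L^2}=1$ gives exactly \eqref{derivvp}. The only non-routine point is verifying that $L$ is symmetric under the symmetry assumption on $K$; everything else is a direct application of the standard self-adjoint perturbation formula.
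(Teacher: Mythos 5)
Your proof is correct and is essentially the same as the paper's: both differentiate the eigenvalue equation in $\theta$, test against $\psi^\theta$, and use the $L^2$-normalization, with the cancellation of all terms involving $\partial_\theta\psi^\theta$ coming from the symmetry of the operator (you call it self-adjointness; the paper realizes it by explicitly multiplying the original eigenvalue equation by $\partial_\theta\psi^\theta$ and integrating by parts). The only cosmetic difference is that you invoke Kato perturbation theory for $C^1$-dependence while the paper cites its Proposition~\ref{propev}, proved via the implicit function theorem.
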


\begin{corollary}\label{coroderiv}
Assume \eqref{H1Chap5}--\eqref{assumption} and let $\overline{\theta} \in \Gamma_\theta$ be an emergent trait. We have 
\[ \int_\Omega \partial_{\theta } R(x, \overline{\theta}) \psi^{\overline{\theta}}(x)^2 dx = 0.\]
\end{corollary}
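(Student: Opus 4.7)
The plan is to combine Proposition \ref{derivlambda} with the characterization of $\Gamma_\theta$ already derived at the beginning of this section, reducing the statement to Fermat's theorem at a minimum.

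First I would recall that, by the chain of inclusions obtained just after Theorem \ref{main},
\[
\Gamma_\theta \subset \{\theta \in [-A,A] : \lambda(\theta,\rho) = \min_{\theta' \in [-A,A]} \lambda(\theta',\rho) = 0\}.
\]
In particular, any $\overline\theta \in \Gamma_\theta$ is a global minimizer of the map $\theta \mapsto \lambda(\theta,\rho)$ on $[-A,A]$, and $\lambda(\cdot,\rho) \ge 0 = \lambda(\overline\theta,\rho)$ on the whole interval.

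Next, by Proposition \ref{derivlambda}, this map is of class $C^1$ on $[-A,A]$ with
\[
\partial_\theta \lambda(\theta,\rho) = - \int_\Omega \partial_\theta R(x,\theta)\,\psi^\theta(x)^2\,dx.
\]
Thus the corollary reduces to showing that $\partial_\theta \lambda(\overline\theta,\rho) = 0$ for every $\overline\theta \in \Gamma_\theta$.

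For the generic interior case $\overline\theta \in \,]-A,A[\,$, the conclusion is immediate: it is Fermat's theorem applied to the differentiable function $\lambda(\cdot,\rho)$ at its global minimum $\overline\theta$, and the claimed integral identity follows directly from the formula above. The only mildly delicate point is the boundary case $\overline\theta \in \{-A,A\}$: here one exploits the Neumann condition $\partial_{\nu_\theta} u(\pm A) = 0$ in \eqref{HJChap5} together with the Hamilton–Jacobi equation $|\partial_\theta u|^2 = \lambda(\theta,\rho)$, which forces $\lambda$ to vanish to order one at $\pm A$ — formally, the even reflection compatible with the Neumann condition extends the minimum situation to a neighbourhood of $\pm A$, so that Fermat's theorem applies again. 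The main obstacle is thus not the calculus step but the availability of Proposition \ref{derivlambda}; once this perturbation-of-eigenvalues formula is proved (in Appendix A), the corollary follows essentially in one line.
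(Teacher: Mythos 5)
Your main argument — combine Proposition~\ref{derivlambda} with the inclusion $\Gamma_\theta\subset\{\theta:\lambda(\theta,\rho)=\min_{\theta'}\lambda(\theta',\rho)=0\}$ and apply the first-order optimality condition — is exactly the one-line proof the paper intends (the corollary is stated without proof precisely because it is an immediate consequence of Proposition~\ref{derivlambda}). For $\overline\theta$ in the interior of $]-A,A[$ your argument is complete and correct.

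The boundary case is where the proposal goes wrong. You correctly flag that Fermat does not apply directly at $\pm A$, but the ``even reflection'' fix is circular: reflecting $\lambda$ evenly across $\pm A$ produces a function whose derivative at the reflection point vanishes \emph{by construction}, and this tells you nothing about the one-sided derivative of the original $\lambda$. The Neumann condition on $u$ does not transfer to a Neumann-type condition on $\lambda$. Concretely, suppose $\lambda(\theta,\rho)=c(A-\theta)+o(A-\theta)$ near $\theta=A$ with $c>0$. Then $u(\theta)=-\tfrac{2}{3}\sqrt{c}\,(A-\theta)^{3/2}$ solves $|\partial_\theta u|^2=\lambda(\theta,\rho)$ with $u(A)=\max u=0$, and $\partial_\theta u(\theta)=\sqrt{c}\,(A-\theta)^{1/2}\to 0$ as $\theta\to A^-$, so the Neumann condition for $u$ holds, yet $\partial_\theta\lambda(A^-,\rho)=-c\neq 0$. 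So a boundary minimizer of $\lambda$ need not be a critical point, and the integral identity of the corollary may fail there. The paper silently sidesteps this by treating emergent traits as interior (consistent with the spirit of Assumption~\eqref{assumption}); if you wish the statement to cover $\overline\theta=\pm A$ you would need to rule out the boundary case by a separate argument rather than by reflection.
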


Next, we consider some examples with explicit expressions of $R$. We illustrate how Proposition \ref{derivlambda} can be useful to characterize the emergent traits. 

\vspace{0.5cm}

\textbf{Example A. } We fix $\theta_0 \in ]-A,A[$, and we define 
\[ R(x, \theta) = r - g(\theta - \theta_0)^2.\]
We assume that $r$ is large enough such that \eqref{assumption} holds true for $\theta = \theta_0$. From Corollary \ref{coroderiv}, we deduce that at any emergent trait $\overline{\theta}$, we have
\[ 2g(\overline{\theta} - \theta_0)  \int_{\Omega } \psi^{\overline{\theta}}(x)^2 dx = 0.\]
We deduce that the unique emergent trait is $\overline{\theta} = \theta_0$. Therefore, the limit population is monomorphic. Of course, this example is a toy-model and does not involve any spatial structure. 

\vspace{0.1cm}

\textbf{Example B. } We define 
\begin{equation}
\label{fitness-quadratic}
 R(x, \theta) = r - g(\theta - bx)^2.
\end{equation}
From Corollary \ref{coroderiv}, we deduce that for any emergent trait $\overline{\theta}$, we have 
\[ \overline{\theta} = b \int_\Omega x \psi^{\overline{\theta}}(x)^2 dx.\]
This is not enough to conclude that the number of emergent traits is finite. However, we can still remark that the emergent traits  are fixed points of the application 
\begin{equation}\label{chi}
    \chi : \theta \mapsto b \int_{\Omega } x \psi^\theta(x)^2  dx.
\end{equation}

\subsection{At most one possible monomorphic outcome}
\label{sec:unique-mono}

In this subsection,  we restrict our study to monomorphic limits. We will prove that, when $R$ is given by \eqref{fitness-quadratic}, the problem admits at most one monomorphic limit.

\begin{proposition}
\label{prop:mono-uniq}Assume \eqref{H1Chap5}--\eqref{assumption} and let $n_1(x,\theta)=\rho_1(x)\delta(\theta-\overline \theta_1)$ and $n_2(x,\theta)=\rho_2(x)\delta(\theta-\overline \theta_2)$ be two monomorphic limits of the problem. Then, $\rho_1=\rho_2$.\\
 Additionally, if $R$ is given by \eqref{fitness-quadratic}, then $\theta_1=\theta_2$.\\
 Furthermore, if the domain $\Omega$ is symmetric with respect to $x=0$, then the only possible emergent trait corresponding to a monomorphic population is $\overline{\theta}=0$. 
\end{proposition}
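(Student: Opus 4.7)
The plan is to reduce the proposition to a direct comparison of the spatial densities $\rho_1$ and $\rho_2$ via the variational characterization of $\lambda$, and then to exploit the algebraic structure of the quadratic $R$ and the reflection symmetry of $\Omega$.

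First, I would derive a nonlocal Fisher--KPP equation for each $\rho_i$. Integrating \eqref{E} against $d\theta$ on $(-A,A)$ kills the $\partial_{\theta\theta}$ boundary contribution thanks to the Neumann condition, and, since $n_\varepsilon \rightharpoonup \rho_i(x)\delta(\theta-\overline\theta_i)$ along the subsequence yielding the $i$-th limit (Theorem \ref{main}), the product $\int n_\varepsilon R\,d\theta$ tends to $\rho_i(x) R(x,\overline\theta_i)$. Passing to the limit produces
\[
-\partial_{xx}\rho_i + L\rho_i = \rho_i\bigl(R(\cdot,\overline\theta_i) - \rho_i\bigr) \ \text{in } \Omega, \qquad \partial_{\nu_x}\rho_i = 0 \ \text{on } \partial\Omega.
\]
Rewritten as $[-\partial_{xx}+L-(R(\cdot,\overline\theta_i)-\rho_i)]\rho_i=0$, this shows $\rho_i$ is a positive solution of the eigenvalue problem \eqref{vpchap5} with eigenvalue $\lambda(\overline\theta_i,\rho_i)=0$. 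By simplicity of the principal eigenvalue, $\rho_i = C_i\, \psi^{\overline\theta_i}$ for some $C_i>0$, so $(\psi^{\overline\theta_i})^2 = \rho_i^2/C_i^2$.

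For the first claim I would exploit the self-adjointness of \eqref{vpchap5} (the kernel $K$ is even) to write $\lambda(\theta,\rho)=\inf_{\|\phi\|_2=1}\mathcal Q_{\theta,\rho}(\phi)$. Testing with $\phi=\psi^{\overline\theta_2}$ at the pair $(\overline\theta_2,\rho_1)$ and subtracting $\mathcal Q_{\overline\theta_2,\rho_2}(\psi^{\overline\theta_2})=\lambda(\overline\theta_2,\rho_2)=0$ yields
\[
\lambda(\overline\theta_2,\rho_1)\le \int_\Omega(\rho_1-\rho_2)(\psi^{\overline\theta_2})^2\,dx = \tfrac{1}{C_2^2}\int_\Omega(\rho_1-\rho_2)\rho_2^2\,dx.
\]
Combined with $\lambda(\overline\theta_2,\rho_1)\ge \min_\theta\lambda(\theta,\rho_1)=\lambda(\overline\theta_1,\rho_1)=0$, this gives $\int_\Omega(\rho_1-\rho_2)\rho_2^2\,dx \ge 0$; the symmetric argument furnishes $\int_\Omega(\rho_2-\rho_1)\rho_1^2\,dx\ge 0$. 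Adding these,
\[
-\int_\Omega (\rho_1-\rho_2)^2(\rho_1+\rho_2)\,dx \ge 0,
\]
and since $\rho_1+\rho_2>0$ by the strong maximum principle on the Fisher--KPP equation, this forces $\rho_1=\rho_2$.

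For the second claim, writing $\rho:=\rho_1=\rho_2>0$ and subtracting the two Fisher--KPP equations gives $\rho\bigl(R(\cdot,\overline\theta_1)-R(\cdot,\overline\theta_2)\bigr)\equiv 0$, so $(\overline\theta_1-bx)^2=(\overline\theta_2-bx)^2$ for every $x\in\Omega$; since $\Omega$ contains an interval this polynomial identity forces $\overline\theta_1=\overline\theta_2$. For the last claim, when $\Omega=-\Omega$ the change of variables $(x,\theta)\mapsto(-x,-\theta)$ preserves \eqref{E} because $R(-x,-\theta)=R(x,\theta)$, $K$ is even, and $\Omega$ is symmetric; hence $(\rho(-\cdot),-\overline\theta)$ is again a monomorphic limit, and the uniqueness of $\overline\theta$ just established yields $\overline\theta=-\overline\theta$, i.e.\ $\overline\theta=0$. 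The step I expect to be the main obstacle is the rigorous passage to the limit in $\int n_\varepsilon R\,d\theta$ inside $\Omega$: justifying the identification of the limit mass at $\overline\theta_i$ as exactly $\rho_i(x)$ requires combining the Harnack inequality \eqref{Harnack} (which prevents $n_\varepsilon$ from oscillating on each $\varepsilon$-scale neighborhood of the concentration point) with the uniform bounds on $\rho_\varepsilon$ and the constraint $\max_\theta u(\theta)=0$. Once this identification is in hand, the remaining Rayleigh-quotient manipulation is purely algebraic.
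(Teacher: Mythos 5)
Your argument for $\rho_1=\rho_2$ is the same Rayleigh-quotient comparison the paper uses (the paper's Lemma~\ref{lem:rhoef} plus the two test-function inequalities \eqref{Rtheta1psi1psi1}--\eqref{Rtheta1psi2psi1}, relabelled), and the symmetry argument for $\overline\theta=0$ is also identical. Where you genuinely diverge is the step $\theta_1=\theta_2$: the paper invokes Corollary~\ref{coroderiv}, i.e.\ the stationarity of $\theta\mapsto\lambda(\theta,\rho)$ at an emergent trait, which via Proposition~\ref{derivlambda} gives $\theta_i=b\int_\Omega x\,\psi_i^2\,dx$ and then concludes from $\psi_1=\psi_2$. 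You instead subtract the two Fisher--KPP equations for the common $\rho$ and use $\rho>0$ to get $R(\cdot,\theta_1)\equiv R(\cdot,\theta_2)$ on $\Omega$, then the polynomial identity $(\theta_1-bx)^2=(\theta_2-bx)^2$ on an interval. Your route is more elementary (it avoids the differentiability of $\lambda$ and the formula for $\partial_\theta\lambda$) and immediately generalizes to any $R$ for which $R(\cdot,\theta_1)\equiv R(\cdot,\theta_2)$ forces $\theta_1=\theta_2$; but it silently needs $g\neq 0$ and $b\neq 0$ to rule out the spurious branch $\theta_1+\theta_2=2bx$ (the paper's version handles $b=0$ without a separate case, returning $\theta_1=\theta_2=0$). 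Finally, the obstacle you flag at the end is not really one here: the proposition \emph{assumes} the limit has the form $\rho_i(x)\delta(\theta-\overline\theta_i)$, so the identification of the mass as $\rho_i$ is part of the hypothesis, and passing to the limit in $\int R\,n_\varepsilon\,d\theta$ against bounded continuous $R$ needs only the weak-$*$ convergence of $n_\varepsilon$ together with the strong convergence of $\rho_\varepsilon$, not the Harnack inequality.
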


To prove  Proposition \ref{prop:mono-uniq}, we will use the following Rayleigh quotient: 
\begin{equation}\label{RayleighQuotient}
    \mathcal{R}(\theta, \rho, \phi) =  \frac{  \int_\Omega |\partial_x \phi|^2 dx + \frac{ \int_{\Omega \times \Omega} [\phi(x) - \phi(y)]^2 K(x-y) dxdy}{2} - \int_\Omega [R(x, \theta) - \rho(x)] \phi^2 dx }{\int_{\Omega} \phi(x)^2dx},
\end{equation}
We recall the classical link between $\lambda$ and $\mathcal{R}$:
\[ \lambda(\theta, \rho) = \underset{\phi \in H^1(\Omega)}{\min} \mathcal{R}(\theta, \rho, \phi). \]
We will also use the following lemma.
\begin{lemma}\label{lem:rhoef}
Assume \eqref{H1Chap5}--\eqref{assumption}and that $(n_\e)$ converges in $\mathrm{L^\infty}\left(\mathrm{w*} (0,\infty) ; \mathcal{M}^1(\R^d) \right)$ to $\rho(x)\delta(\theta-\theta_0)$. Then $\rho(\cdot)$ is the principal eigenfunction corresponding to the operator $-\Delta_x + L -[R(\cdot, \theta_0) -\rho ]$.
 \end{lemma}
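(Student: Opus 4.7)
The plan is to derive a closed equation for $\rho_\e$ by integrating \eqref{E} with respect to $\theta$, then pass to the limit $\e\to 0$ and identify $\rho$ as the unique positive eigenfunction of the stated operator.

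First I would integrate equation \eqref{E} over $\theta \in ]-A,A[$. The Neumann boundary condition $\partial_{\nu_\theta} n_\e(x,\pm A)=0$ eliminates the contribution of $-\e^2 \partial_{\theta\theta} n_\e$. Since the operators $-\partial_{xx}$ and $L$ act only on the spatial variable, they commute with integration in $\theta$, yielding
\begin{equation*}
-\partial_{xx}\rho_\e(x) + L\rho_\e(x) \;=\; \int_{-A}^{A} R(x,\theta)\, n_\e(x,\theta)\, d\theta \;-\; \rho_\e(x)^2 \quad \text{in } \Omega,
\end{equation*}
together with $\partial_{\nu_x} \rho_\e = 0$ on $\partial \Omega$.

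The next step is to pass to the limit $\e \to 0$. By Theorem \ref{Regularitythm}, $\rho_\e$ is uniformly bounded in $W^{2,p}(\Omega)$ and converges uniformly to $\rho$ with $0 < c \leq \rho \leq C$; consequently $-\partial_{xx} \rho_\e + L\rho_\e \to -\partial_{xx}\rho + L\rho$ in the sense of distributions, and $\rho_\e^2 \to \rho^2$ uniformly. To handle the integral term, pick any $\phi \in C_c^\infty(\Omega)$: the function $(x,\theta)\mapsto \phi(x) R(x,\theta)$ is continuous on $\overline{\Omega}\times[-A,A]$ by \eqref{H2Chap5}, so the weak-$*$ convergence of $n_\e$ to $\rho(x)\delta(\theta-\theta_0)$ gives
\begin{equation*}
\int_{\Omega\times]-A,A[} \phi(x) R(x,\theta)\, n_\e \, dx\, d\theta \;\longrightarrow\; \int_\Omega \phi(x) R(x,\theta_0)\, \rho(x)\, dx.
\end{equation*}
Passing to the limit in the distributional formulation and then upgrading by elliptic regularity, I obtain
\begin{equation*}
-\partial_{xx}\rho + L\rho - [R(x,\theta_0) - \rho(x)]\, \rho = 0 \quad \text{in }\Omega, \qquad \partial_{\nu_x}\rho = 0 \text{ on }\partial\Omega.
\end{equation*}

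Finally, since $\rho > 0$ on $\overline\Omega$, it is a positive eigenfunction of the linear elliptic operator $-\partial_{xx} + L - [R(\cdot,\theta_0)-\rho]$ associated with the eigenvalue $0$. Using the uniqueness up to positive scalar multiples of the positive eigenfunction for such operators (which is exactly the property established for $\lambda(\theta,\rho)$ in Appendix A), I conclude that $\rho$ must coincide with the principal eigenfunction and hence $\lambda(\theta_0,\rho)=0$, consistent with the inclusion property of Theorem \ref{main}. The main technical step is the weak-$*$ passage to the limit in $\int R\, n_\e \, d\theta$; this is straightforward because $R$ is continuous in both variables, so testing the weak convergence of $n_\e$ against the continuous test function $\phi R$ suffices.
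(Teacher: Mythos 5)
Your argument is correct and follows essentially the same route as the paper's (very terse) proof: the paper simply says to pass weakly to the limit in \eqref{E} to obtain $-\partial_{xx}\rho + L\rho = \rho(R(x,\theta_0)-\rho)$, which is precisely your equation \eqref{eqrho} in the limit $\e\to 0$. You merely fill in the details — integrating in $\theta$, invoking the $W^{2,p}$ bounds of Theorem \ref{Regularitythm} for the left-hand side, using the weak-$*$ convergence of $n_\e$ against the continuous test function $\phi R$, and then appealing to positivity and Krein--Rutman uniqueness to identify $\rho$ as the principal eigenfunction — all of which is consistent with what the authors leave implicit.
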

 \begin{proof}
 By passing weakly to the limit in the equation \eqref{E}, we obtain that
 $$
 -\partial_{xx}\rho+L\rho=\rho(R(x,\theta_0)-\rho),
 $$
 which implies that $\rho$ is the principal eigenfunction corresponding to the operator $-\Delta_x + L -[R(\cdot, \theta_0) -\rho ]$.
 \end{proof}

\begin{proof}[Proof of Proposition \ref{prop:mono-uniq}]
 Let $n_1(x,\theta)=\rho_1(x)\delta (\theta-\theta_1)$ and $n_{2}(x,\theta)=\rho_{2}(x)\delta (\theta-\theta_{2})$ be two possible limits. Our objective is to prove that
\[ \begin{aligned}
 &\rho_1=\rho_2 \quad &&\text{ for generic } R \text{ satisfying \eqref{H2Chap5}}, \\
 \text{and } \ &  \theta_1 = \theta_2 &&\text{ if, furthermore,  } R \text{ is of the form of \eqref{fitness-quadratic}}.
 \end{aligned}\]
From Theorem \ref{main} we obtain that
$$
\min_\theta \lambda(\theta,\rho_i)=\lambda(\theta_i,\rho_i)=0.
$$
Let $\psi_{i}$ be the positive eigenfunction  associated to the operator $-\Delta_x + L -[R(\cdot, \theta_i) -\rho_i]$ 
 by Proposition \ref{propev} with $\|\psi_i \|_{L^2} = 1$ (i.e $\psi_i = \psi^{\theta_i}$).  Notice that since the limit is monomorphic and thanks to Lemma \ref{lem:rhoef}, we have for each case $\rho_i = c_i \psi_i$ with $c_i>0$.  
Using the Rayleigh quotient introduced in \eqref{RayleighQuotient}, we deduce that 
\begin{equation} \label{Rtheta1psi1psi1}
\begin{aligned}
   0&= \int_{\Omega} c_1^2 \psi_1(x)^2dx \times \mathcal{R}(\theta_1,c_1 \psi_1,c_1\psi_1) \\
   & = c_1^2 \int_\Omega |\partial_x \psi_1|^2 dx +c_1^2 \frac{ \int_{\Omega \times \Omega} [\psi_1(x) - \psi_1(y)]^2 K(x-y) dxdy}{2} - c_1^2 \int_\Omega [R(x, \theta_1) -c_1 \psi_1] \psi_1^2 dx, 
\end{aligned}
\end{equation}
and 
\begin{equation}    \label{Rtheta1psi2psi1}
\begin{aligned}
    &0 && \leq \int_{\Omega} c_1^2 \psi_1(x)^2dx \times\mathcal{R}(\theta_1,c_2\psi_2, c_1 \psi_1)\\
    & \ && =  c_1^2  \int_\Omega |\partial_x \psi_1|^2 dx + c_1^2 \frac{ \int_{\Omega \times \Omega} [\psi_1(x) - \psi_1(y)]^2 K(x-y) dxdy}{2} - c_1^2 \int_\Omega [R(x, \theta_1) - c_2 \psi_2] \psi_1^2 dx. 
 \end{aligned}
\end{equation}

The last inequality holds since thanks to Theorem \ref{main} we have
\[ 0 \leq \lambda(\theta_1, c_2\psi_2) \leq  \mathcal{R}(\theta_1,c_2\psi_2, c_1 \psi_1).\]
By substracting \eqref{Rtheta1psi1psi1} to \eqref{Rtheta1psi2psi1}, we deduce that 
\begin{equation}
    \label{psi12}
     \int_{\Omega} \left(  c_1 \psi_1(x)\right)^{3} dx \leq  \int_{\Omega}  \left(c_1\psi_1(x)\right)^2 c_2\psi_2(x) dx.
\end{equation}
Following similar computations, we obtain that 
\begin{equation}
    \label{psi21}
    \int_{\Omega} \left(c_2\psi_2(x) \right)^3 dx \leq   \int_{\Omega} \left( c_2\psi_2(x) \right)^2 c_1\psi_1(x) dx .
\end{equation}
By combining \eqref{psi12} and \eqref{psi21}, we deduce that
\[ \int_{\Omega} [c_1\psi_1 (x) - c_2\psi_2(x) ]^2 (c_1\psi_1(x) + c_2\psi_2(x) ) dx \leq 0. \]
Since $c_{1,2}\psi_{1,2} > 0$, we deduce that $c_1\psi_1 = c_2\psi_2$, and hence $\rho_1=\rho_2$. \\
We next assume that $R$ is given by \eqref{fitness-quadratic} and conclude thanks to Corollary \ref{coroderiv}. Indeed, since $\partial_\theta R(x, \theta_i) = -2g(\theta_i - bx)$, we deduce that 
\[ \theta_1 = \frac{-b}{2g}\int_{\Omega} x c_1\psi_1^2(x) dx  =  \frac{-b}{2g}\int_{\Omega} x c_2\psi_2^2(x) dx  = \theta_2.\]
 
 Finally, note that under symmetry conditions, if $n(x,\theta)=\rho(x)\delta(\theta)$ is a possible monomorphic limit, then $\widetilde n(x,\theta)=n(-x,-\theta)$ is also a possible monomorphic outcome. We hence deduce in this case that $\theta=0$.
\end{proof}

\subsection{A small selection pressure leads to a monomorphic population}
\label{sec:mono}

First, we state a technical result about the dependance of $\lambda(\theta, \rho)$ with respect to $g$. This proposition is proved at the end of Appendix A. 
\begin{proposition}\label{propo:lambda:result2}
Assume that $R$ is given by \eqref{fitness-quadratic}. Under the hypothesis \eqref{H1Chap5}--\eqref{K}, there holds:
\begin{equation}\label{Claim1}
\lambda(\theta, \rho) \text{ is non-decreasing with respect to }g,
 \end{equation}
and 
\begin{equation}\label{Claim2}
 \int_{\Omega} |\partial_\theta \psi^\theta(x)|^2dx \underset{g \rightarrow 0}{\longrightarrow }0.
 \end{equation}
Moreover, the above limit is uniform with respect to $\theta$. 
\end{proposition}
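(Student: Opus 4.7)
The plan is to treat the two claims separately. Claim \eqref{Claim1} is a direct consequence of the variational formula \eqref{RayleighQuotient}: since $R(x,\theta)=r-g(\theta-bx)^2$ is pointwise non-increasing in $g$, for every fixed test function $\phi\in H^1(\Omega)$ the numerator of $\mathcal R(\theta,\rho,\phi)$ is non-decreasing in $g$; taking the infimum over $\phi$ preserves this monotonicity and yields \eqref{Claim1}.

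For \eqref{Claim2}, the idea is to differentiate the eigenvalue problem \eqref{vpchap5} with respect to $\theta$ and view $\phi^\theta:=\partial_\theta \psi^\theta$ as the solution of a linear equation with an $O(g)$ right-hand side. Since the principal eigenvalue is simple and isolated, analytic perturbation theory ensures smoothness of $\theta\mapsto(\psi^\theta,\lambda(\theta,\rho))$. Differentiating the eigenvalue equation together with the normalization $\|\psi^\theta\|_{L^2}=1$ produces
\begin{equation*}
\bigl(-\partial_{xx} + L - (R-\rho) - \lambda\bigr)\phi^\theta = \bigl(\partial_\theta R + \partial_\theta \lambda\bigr)\psi^\theta \quad \text{in } \Omega, \qquad \int_\Omega \psi^\theta \phi^\theta \, dx = 0,
\end{equation*}
with homogeneous Neumann condition in $x$. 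Since $\partial_\theta R=-2g(\theta-bx)$ is uniformly $O(g)$ on $\Omega\times[-A,A]$, and by Proposition \ref{derivlambda} so is $\partial_\theta \lambda$, the source term has size $O(g)$ in $L^\infty(\Omega)$ uniformly in $\theta$.

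Testing the equation above against $\phi^\theta$ and applying Cauchy--Schwarz yields
\begin{equation*}
\int_\Omega |\partial_x \phi^\theta|^2 \, dx + \tfrac{1}{2}\int_{\Omega\times\Omega} [\phi^\theta(x) - \phi^\theta(y)]^2 K(x-y) \, dxdy - \int_\Omega (R - \rho + \lambda)(\phi^\theta)^2 \, dx \leq Cg\,\|\phi^\theta\|_{L^2}.
\end{equation*}
Because $\phi^\theta \perp \psi^\theta$ in $L^2(\Omega)$ and $\psi^\theta$ is the first eigenfunction of the operator appearing in \eqref{vpchap5}, the left-hand side is bounded below by $\delta(\theta,g)\,\|\phi^\theta\|_{L^2}^2$, where $\delta(\theta,g)>0$ denotes the gap between $\lambda(\theta,\rho)$ and the second eigenvalue. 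This gives $\|\phi^\theta\|_{L^2}\leq Cg/\delta(\theta,g)$.

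The main obstacle is obtaining a lower bound on $\delta(\theta,g)$ that is uniform in $\theta\in[-A,A]$ as $g\to 0$. At $g=0$, the operator $-\partial_{xx} + L - (r-\rho)$ is $\theta$-independent, has compact resolvent on $L^2(\Omega)$ with Neumann boundary condition, and therefore enjoys a strictly positive gap $\delta_0$ between its two lowest eigenvalues. Since the perturbation $-g(\theta-bx)^2$ is a bounded multiplication operator whose $L^\infty$-norm is $O(g)$ uniformly in $\theta$, the min--max principle yields $|\delta(\theta,g)-\delta_0|\leq Cg$, so that $\delta(\theta,g)\geq \delta_0/2$ for $g$ small enough and all $\theta\in[-A,A]$. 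Combined with the previous estimate, this gives $\|\partial_\theta \psi^\theta\|_{L^2}^2 = O(g^2) \to 0$ uniformly in $\theta$, which is \eqref{Claim2}.
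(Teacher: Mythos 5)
Your proof of \eqref{Claim1} via the Rayleigh quotient \eqref{RayleighQuotient} is the natural argument; the paper dismisses this claim as a "straightforward computation" and your version fills in exactly what that computation is, so the two are in agreement.

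For \eqref{Claim2} you take a genuinely different route from the paper, and it is a valid one. The paper differentiates the eigenvalue equation in $\theta$, evaluates at $g=0$, and observes that because $\partial_\theta R_0\equiv 0$ the function $\partial_\theta\psi^{\theta,0}$ lies in the one-dimensional principal eigenspace; since it is also $L^2$-orthogonal to $\psi^{\theta,0}$ it must vanish, and the pointwise convergence then follows from continuous dependence on $g$. Uniformity in $\theta$ is then obtained by a soft compactness argument: a uniform bound on $\partial_{\theta\theta}\psi^{\theta,g}$ over the compact set $[-A,A]\times[0,\widetilde g]$, followed by an $\varepsilon$-net covering of $[-A,A]$. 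Your approach is quantitative where the paper's is qualitative: you test the linearized eigenvalue equation against $\phi^\theta=\partial_\theta\psi^\theta$, exploit the orthogonality $\phi^\theta\perp\psi^\theta$ to bound the resulting quadratic form from below by $\delta(\theta,g)\|\phi^\theta\|_{L^2}^2$ via the spectral gap, and control the $O(g)$ source term (here Proposition \ref{derivlambda} is correctly used to bound $\partial_\theta\lambda$). The min--max comparison with the $\theta$-independent limit operator at $g=0$ gives a gap bounded below uniformly in $\theta$, so the uniformity comes essentially for free and you additionally extract the explicit rate $\|\partial_\theta\psi^\theta\|_{L^2}=O(g)$, which is strictly stronger than the unquantified convergence stated in the proposition. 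The one hypothesis you rely on that the paper keeps implicit is that the operator $-\partial_{xx}+L-(R-\rho)$ is self-adjoint, but that is guaranteed by the evenness of $K$ in assumption \eqref{K}, so the spectral-gap reasoning is justified. Both proofs are correct; yours buys a convergence rate and a cleaner treatment of uniformity, while the paper's avoids any reference to the second eigenvalue or perturbation of the gap.
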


Next, we use Proposition \ref{propo:lambda:result2} to provide a condition which ensures the existence of a set of parameters such that the limit is monomorphic.
\begin{proposition}\label{propofinite}
Assume that $R$ is given by \eqref{fitness-quadratic}. Under the hypothesis \eqref{H1Chap5}--\eqref{assumption}, there exists $g_0>0$ such that if $g\in ]0, g_0[$ then there exists a unique emergent trait. 
\end{proposition}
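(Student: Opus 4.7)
\textbf{Proof plan for Proposition \ref{propofinite}.} The strategy is to rewrite the emergence condition as a fixed-point equation for a map $\chi$ on $[-A,A]$, and then to show that when $g$ is small the derivative of $\chi$ is uniformly strictly less than $1$, which forces $\chi$ to have at most one fixed point. Since Theorem \ref{main} guarantees the existence of at least one emergent trait, this yields exactly one emergent trait.

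First, I would apply Corollary \ref{coroderiv} to the specific growth rate $R(x,\theta)=r-g(\theta-bx)^2$. Since $\partial_\theta R(x,\theta)=-2g(\theta-bx)$ and $\|\psi^\theta\|_{L^2}=1$, any emergent trait $\overline\theta$ must satisfy
\[
\overline\theta \;=\; b\int_\Omega x\,\psi^{\overline\theta}(x)^2\,dx \;=\; \chi(\overline\theta),
\]
with $\chi$ the map introduced in \eqref{chi}. So uniqueness of the emergent trait reduces to uniqueness of the fixed points of $\chi$ in $[-A,A]$.

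Next, I would show that $\theta\mapsto \psi^\theta$ is $C^1$ with values in $L^2(\Omega)$; this follows from analytic perturbation theory applied to the eigenvalue problem \eqref{vpchap5}, using that $\lambda(\theta,\rho)$ is a simple principal eigenvalue (Proposition \ref{propev}). Differentiating $\chi$ under the integral sign gives
\[
\chi'(\theta) \;=\; 2b\int_\Omega x\,\psi^\theta(x)\,\partial_\theta\psi^\theta(x)\,dx,
\]
so by Cauchy--Schwarz and the $L^2$-normalisation $\|\psi^\theta\|_{L^2}=1$,
\[
|\chi'(\theta)| \;\le\; 2|b|\Bigl(\sup_{x\in\Omega}|x|\Bigr)\Bigl(\int_\Omega |\partial_\theta\psi^\theta(x)|^2\,dx\Bigr)^{1/2}.
\]

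Finally, I would invoke Claim \eqref{Claim2} of Proposition \ref{propo:lambda:result2}, which states that $\int_\Omega |\partial_\theta\psi^\theta|^2\,dx\to 0$ as $g\to 0$ uniformly in $\theta\in[-A,A]$. This gives $\sup_{\theta\in[-A,A]}|\chi'(\theta)|\to 0$ as $g\to 0$. Choosing $g_0>0$ so that this supremum is strictly less than $1$ for all $g\in]0,g_0[$, if $\overline\theta_1$ and $\overline\theta_2$ are both fixed points of $\chi$, then
\[
|\overline\theta_1-\overline\theta_2| \;=\; |\chi(\overline\theta_1)-\chi(\overline\theta_2)| \;\le\; \Bigl(\sup_{[-A,A]}|\chi'|\Bigr)\,|\overline\theta_1-\overline\theta_2|,
\]
which forces $\overline\theta_1=\overline\theta_2$. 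The main (mild) obstacle is the justification of the $C^1$ dependence $\theta\mapsto \psi^\theta$ used to differentiate $\chi$; once this standard perturbation-theoretic fact is in hand, the argument is essentially a contraction estimate combined with Claim \eqref{Claim2}.
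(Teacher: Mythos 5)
Your proposal is correct and follows essentially the same route as the paper's proof: reduce the emergence condition via Corollary~\ref{coroderiv} to fixed points of the map $\chi$ in \eqref{chi}, bound $|\chi'|$ by Cauchy--Schwarz and the vanishing of $\int_\Omega |\partial_\theta\psi^\theta|^2\,dx$ from Proposition~\ref{propo:lambda:result2}, and conclude by a contraction argument combined with the existence of at least one fixed point from Theorem~\ref{main}. (Incidentally, your Cauchy--Schwarz bound correctly carries the square root on $\int_\Omega |\partial_\theta\psi^\theta|^2\,dx$, whereas the paper's displayed inequality drops it --- a harmless slip since either form tends to zero; and the $C^1$ dependence $\theta\mapsto\psi^\theta$ you flag as the one thing needing justification is supplied by Proposition~\ref{propev}.)
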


\begin{proof}
First, we differentiate $\chi$ (defined by \eqref{chi}) with respect to $\theta$, to obtain that 
\[ \chi'(\theta)  = 2b\int_{\Omega} x \psi^\theta (x) \partial_{\theta } \psi^\theta(x)dx.\]
Thanks to the Cauchy-Schwarz inequality and Proposition \ref{propo:lambda:result2}, we deduce that
\[ |\chi'(\theta) | \leq 2b\, \underset{ x \in \Omega}{\sup}\  |x| \ \int_{\Omega}  \psi^\theta (x)^2 dx  \int_{\Omega}   \partial_{\theta } \psi^\theta(x)^2 dx =  2b \ \underset{ x \in \Omega}{\sup} \  |x| \ \int_{\Omega}   \partial_{\theta } \psi^\theta(x)^2 dx \underset{ g \rightarrow 0}{\longrightarrow } 0 .\]
Since the last inequality does not depend on the choice of $\theta$, we deduce the existence of a uniform $g_0>0$ such that for all $g \in ]0, g_0[$ we have 
\[ |\chi'(\theta) | < 1.\]

Thanks to Theorem \ref{main} there exists at least one fixed point $\overline{\theta}$ to $\chi$. Moreover since $\chi$ is a contraction mapping, we recover that the fixed point $\overline{\theta}$ is unique. 
\end{proof}
 
\subsection{A strong fragmentation of the environment  leads to polymorphism}
\label{sec:poly}

In this subsection we consider the growth rate given by \eqref{fitness-quadratic} and spatial domains of the following type:
$$
\Omega_d=(-d-a,-d)\cup (d,d+a).
$$

\begin{proposition}\label{prop:polydist}
Under the assumptions \eqref{H1Chap5}--\eqref{assumption}, for $d\geq d_0$, with $d_0$ a large enough constant, the trait $0$ is not included in the support of the phenotypic density $n$. As a consequence, the population density is not monomorphic. 
\end{proposition}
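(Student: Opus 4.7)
The plan is to combine a lower bound on the eigenvalue $\lambda(0,\rho)$ with the uniqueness-under-symmetry statement from Proposition~\ref{prop:mono-uniq}.

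First, I would establish that $\lambda(0,\rho)>0$ when $d$ is large. Multiplying the eigenvalue equation \eqref{vpchap5} at $\theta=0$ by $\psi^0$ (which is positive) and integrating over $\Omega_d$, integration by parts against the Neumann boundary condition yields $\int_{\Omega_d}|\partial_x\psi^0|^2\geq 0$, while the symmetry $K(z)=K(-z)$ turns the nonlocal term into $\tfrac{1}{2}\int\!\!\int_{\Omega_d\times\Omega_d}[\psi^0(x)-\psi^0(y)]^2 K(x-y)\,dxdy\geq 0$. Hence
\[
\lambda(0,\rho)\;\geq\;\inf_{x\in\Omega_d}\bigl(\rho(x)-R(x,0)\bigr)\;\geq\;-\sup_{x\in\Omega_d}R(x,0)\;=\;gb^2 d^2-r,
\]
using only that $\rho\geq 0$ and $R(x,0)=r-gb^2 x^2\leq r-gb^2 d^2$ on $\Omega_d$. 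Choosing $d_0$ with $gb^2 d_0^2>r$, for every $d\geq d_0$ one obtains $\lambda(0,\rho)>0$. By Theorem~\ref{main} any $\overline\theta\in\Gamma_\theta$ must satisfy $\lambda(\overline\theta,\rho)=\min_\theta\lambda(\theta,\rho)=0$; therefore $0\notin\Gamma_\theta$, proving that $0$ is not contained in the $\theta$-support of $n$.

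For the polymorphism consequence, $\Omega_d$ is symmetric about $x=0$ and the data are invariant under the joint reflection $(x,\theta)\mapsto(-x,-\theta)$ because $K$ is even and $R(-x,-\theta)=R(x,\theta)$. Thus if $n(x,\theta)=\rho(x)\delta(\theta-\overline\theta)$ were a monomorphic limit, its reflection $\widetilde n(x,\theta)=\rho(-x)\delta(\theta+\overline\theta)$ would be a monomorphic limit as well; the last claim of Proposition~\ref{prop:mono-uniq} then forces $\overline\theta=0$, contradicting the previous step. Hence $n$ cannot be monomorphic.

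The only delicate step is the eigenvalue lower bound: one must verify that the nonlocal operator contributes nonnegatively to the Rayleigh quadratic form via symmetrization, and ensure that the argument uses only the pointwise nonnegativity of $\rho$, so that no $d$-dependent $L^\infty$ control on the density is needed. Once this estimate is in hand, the polymorphism follows immediately from the joint reflection symmetry combined with Proposition~\ref{prop:mono-uniq}.
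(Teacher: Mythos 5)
Your second step (the reflection-symmetry argument yielding $\overline\theta=0$ for any monomorphic limit, hence non-monomorphism) is correct and matches the paper exactly. The gap is in the first step.

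Your lower bound $\lambda(0,\rho)\geq \inf_{\Omega_d}\bigl(\rho(x)-R(x,0)\bigr)\geq gb^2d^2-r$ is a valid inequality, but it gives $\lambda(0,\rho)>0$ only when $r<gb^2d^2$. The paper's proof explicitly begins by noting that ``for fixed $d$, there always exists $r(d)$ such that for all $r\geq r(d)$, \eqref{assumption} is satisfied,'' and then says ``up to adjusting the constant $r$, we are in a situation where the population persists.'' In other words, as $d\to\infty$ one must take $r$ growing roughly like $g(bd-A)^2\sim gb^2d^2$ just so that (H4) holds, and for any $r$ somewhat above the critical value the assumptions allow $r\geq gb^2d^2$, at which point your bound gives $\lambda(0,\rho)\geq gb^2d^2-r\leq 0$ and the argument collapses. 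You also cannot silently fix $r$ and let $d\to\infty$, because then (H4) fails and the statement is vacuous. The paper's proof circumvents this by not estimating $\lambda(0,\rho)$ in isolation: it shows that a weighted sum $\|\phi_0^{(1)}\|_{L^2}^2\mathcal R(\theta_1,\rho,\phi_1)+\|\phi_0^{(2)}\|_{L^2}^2\mathcal R(\theta_2,\rho,\phi_2)$ is strictly less than $\mathcal R(0,\rho,\phi_0)=\lambda(0,\rho)$ when $d$ is large. The crucial term is $\int_{\Omega_i}[R(x,0)-R(x,\theta_i)]\phi_0^2\,dx$, in which the maximal growth rate $r$ cancels, leaving the $r$-independent bound $-g(d^2-a^2/4)+\tfrac{a}{2}\|K\|_{L^\infty}$, which becomes negative for $d$ large irrespective of how $r$ scales with $d$. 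This comparison of eigenvalues at the ``local optimal'' traits $\theta_1=-d-a/2$, $\theta_2=d+a/2$ is the key idea you are missing.
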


\begin{proof}
We first note that, for fixed $d$, there always exists $r(d)$ such that for all $r\geq r(d)$, \eqref{assumption} is satisfied so that thanks to Theorem \ref{existence} the population persists. We thus can assume that, up to adjusting the constant $r$, we are in a situation where the population persists.

We prove that $\theta=0$ is not an emergent trait. Let's suppose by contradiction that $\theta = 0$ is included in the support of $n(x,\cdot)$. 
Then, the Rayleigh quotient \eqref{RayleighQuotient} implies that
$$
\lambda(0,\rho)=\inf_{{\phi\in H^1(\Omega)}} \mathcal R (0, \rho,\phi)=0\leq \lambda(\theta,\rho), \quad \text{for all $\theta\in [-A,A]$}.
$$
Let $\phi_0\in H^1(\Omega)$ be such that
$$
\inf_{{\phi\in H^1(\Omega)}} \mathcal R (0, \rho,\phi)=\mathcal R (0, \rho,\phi_0)=0, \qquad  \|\phi_0\|_{L^2(\Omega)}=1, \qquad \phi_0>0.
$$
We choose 
$$
\theta_1=-d-a/2, \qquad \theta_2=d+a/2.
$$
We also define
$$
\Omega_1=[-d-a,-d], \quad \Omega_2=[d,d+a], \quad \phi_0^{(1)}=\phi_0 \mathds{1}_{\Omega_1}, \qquad \phi_0^{(2)}=\phi_0 \mathds{1}_{\Omega_2}.
$$
We will prove that when $d$ is large enough, 
\begin{equation}
\label{lambdatheta0}
\| \phi_0^{(1)}\|_{L^2}\lambda(\theta_1,\rho)+\| \phi_0^{(2)}\|_{L^2}\lambda(\theta_2,\rho)< \lambda(0,\rho)=0.
\end{equation}
Since $\| \phi_0^{(1)}\|_{L^2} + \| \phi_0^{(2)}\|_{L^2} = 1$, the inequality \eqref{lambdatheta0} would imply 
\[\min(\lambda(\theta_1, \rho), \lambda(\theta_2, \rho) )< 0\]
which is in contradiction with the positiveness of the eigenvalues $\lambda(\theta, \rho)$ established in \textit{2.} of Theorem \ref{main}.

We consider the positive function $\phi_i$, for $i=1,2$, which minimizes $\mathcal R( 0,\rho,\cdot)$ restricted to the set $\Omega_i$, that is the following operator
$$
\mathcal R( 0,\rho,\Omega_i,\phi)=\frac{\int_{\Omega_i}|\partial_x \phi|^2 dx + \frac{ \int_{\Omega_i \times \Omega_i} [\phi(x) - \phi(y)]^2 K(x-y) dxdy}{2} - \int_{\Omega_i} [R(x,0) - \rho(x)] \phi^2 dx}{\|\phi\|_{L^2(\Omega_i)}^2},
$$
and such that
$$
\|\phi_i\|_{L^2(\Omega_i)}=1.
$$
Note that here $\phi_i$ has its support in $\Omega_i$, while $\phi_0$ has its support in $\Omega_d$.
To prove \eqref{lambdatheta0}, it  is enough to prove that 
$$
\| \phi_0^{(1)}\|_{L^2}^2\mathcal R(\theta_1,\rho,\phi_1) + \| \phi_0^{(2)}\|_{L^2}^2\mathcal R(\theta_2,\rho,\phi_2)<\mathcal R(0,\rho,\phi_0).
$$
We compute
$$
\begin{array}{rl}
\| \phi_0^{(i)}\|_{L^2}^2\mathcal R(\theta_i,\rho,\phi_i) &=\| \phi_0^{(i)}\|_{L^2}^2 \big[\mathcal R( \theta_i,\rho,\Omega_i,\phi_i)+\frac{1}{2} \int_{\underset{i\neq j}{\Omega_i \times \Omega_j}} \phi_i^2(x)  K(x-y) dxdy \big] \\
&\leq \| \phi_0^{(i)}\|_{L^2}^2 \big[\mathcal R( \theta_i,\rho,\Omega_i,\phi_0^{(i)})+\frac{1}{2}  \int_{\underset{i\neq j}{\Omega_i \times \Omega_j}} \phi_i^2(x)  K(x-y) dxdy\big] \\
&=\int_{\Omega_i}|\partial_x \phi_0^{(i)}|^2 dx + \frac{1}{2} \int_{\Omega_i \times \Omega_i} [\phi_0^{(i)}(x) - \phi_0^{(i)}(y)]^2 K(x-y) dxdy\\\\
& - \int_{\Omega_i} [R(x,\theta_i) - \rho(x)] \phi_0^{(i)\, 2}(x) dx
+\frac{1}{2}\| \phi_0^{(i)}\|_{L^2}^2  \int_{\underset{i\neq j}{\Omega_i \times \Omega_j}} \phi_i^2(x)  K(x-y) dxdy  .
  \end{array}
$$
Combining the inequality above, for $i=1$ and $i=2$, we obtain that
$$
\begin{array}{c}
\| \phi_0^{(1)}\|_{L^2}^2\mathcal R(\theta_1,\rho,\phi_1) + \| \phi_0^{(2)}\|_{L^2}^2\mathcal R(\theta_2,\rho,\phi_2)\leq\mathcal R(0,\rho,\phi_0)
\\
+\int_{\Omega_1} [R(x,0)-R(x,\theta_1)  ] \phi_0^{2}(x) dx+\int_{\Omega_2} [R(x,0)-R(x,\theta_2)  ] \phi_0^{2}(x) dx\\
+\frac{1}{2}\| \phi_0^{(1)}\|_{L^2}^2 \int_{\Omega_1 \times \Omega_2} \phi_1^2(x)  K(x-y) dxdy +\frac{1}{2}\| \phi_0^{(2)}\|_{L^2}^2 \int_{\Omega_2 \times \Omega_1} \phi_2^2(x)  K(x-y) dxdy.
\end{array}
$$
We next note that
$$
\int_{\Omega_i} [R(x,0)-R(x,\theta_i)  ] \phi_0^{2}(x) dx= -g\theta_i\int_{\Omega_i} ( 2x-\theta_i) \phi_0^{2}(x) dx\leq 
-g(d^2-a^2/4) \int_{\Omega_i}\phi_0^{2}(x) dx.
$$
We also have
$$
 \int_{\Omega_i \times \Omega_j} \phi_i^2(x)  K(x-y) dxdy\leq a \|K\|_{L^\infty(\R)} \int_{\Omega_i } \phi_i^2(x)dx=a \|K\|_{L^\infty(\R)}.
 $$
 We deduce that
 $$
 \| \phi_0^{(1)}\|_{L^2}^2\mathcal R(\theta_1,\rho,\phi_1) + \| \phi_0^{(2)}\|_{L^2}^2\mathcal R(\theta_2,\rho,\phi_2)\leq\mathcal R(0,\rho,\phi_0)
 -g(d^2-a^2/4)+\frac a 2\|K\|_{L^\infty(\R)}  .
 $$
 Therefore, for $d$ large enough, we obtain that
 $$
  \| \phi_0^{(1)}\|_{L^2}^2\mathcal R(\theta_1,\rho,\phi_1) + \| \phi_0^{(2)}\|_{L^2}^2\mathcal R(\theta_2,\rho,\phi_2)<\mathcal R(0,\rho,\phi_0).
$$
We conclude that $\theta=0$ is not an emergent trait.

It remains to show that the population density cannot be monomorphic. Indeed, if we assume that $n$ is monomorphic  with $\overline{\theta}$ as the only emergent trait, then since the domain $\Omega$ is symmetric and according to Proposition \ref{prop:mono-uniq}, it follows that $\overline{\theta} = 0$. This is in contradiction with the fact that $\theta=0$ is not an emergent trait.
\end{proof}

\section{Some numerical illustrations}\label{discussion}

In this section, we illustrate the numerical solutions of \eqref{E}, for some particular examples, considering the following type of growth rate
\[R(x,\theta) = r - g (bx-\theta)^2.\]
We recall from Example \ref{example1} that in this case, $r$ is a maximal growth rate and $-g(bx-\theta)^2$ models the selection. The parameter $g$ is the selection pressure whereas $b$ is the gradient of the environment. We provide numerical examples where we vary this set of parameters. 

To find a numerical solution of \eqref{E}, we solve numerically the following parabolic equation:
\begin{equation}\label{Et} \tag{$E_t$}
    \left\lbrace
    \begin{aligned}
    &\partial_t n_\varepsilon -\partial_{xx} n_\varepsilon - \varepsilon^2 \partial_{\theta \theta} n_\varepsilon + L (n_\varepsilon) = [R - \rho_\varepsilon ] n_\varepsilon && \text{ in } \mathbb{R}^+ \times \Omega \times ]-A,A[, \\
    &\rho_\varepsilon (t,x) = \int_{-A}^A n_\varepsilon (t,x,\theta) d \theta && \text{ in } \mathbb{R}^+ \times \Omega, \\
    &\partial_{\nu_x } n_\varepsilon = \partial_{\nu_\theta} n_\varepsilon  = 0 , \\
    & n(t=0, x, \theta) = n_0(x,\theta).
    \end{aligned}
    \right.
\end{equation}
We implement equation \eqref{Et} by a semi-implicit finite difference method. We stop the algorithm when we find a numerical steady state of \eqref{Et}: a numerical solution of \eqref{E}.

\vspace{0.5cm}

First, we underline that in all the numerical resolutions, the density of the population concentrates around one or several distinct trait(s). Moreover, these emergent traits are present everywhere in space thanks to the local and the non-local migration. However, the density of the population at the position $x$ with a emergent trait $\theta_{m}$ depends on whether this trait $\theta_m$ is adapted or not to the position $x$.\\

\begin{figure}[!h]
  \begin{minipage}[b]{0.3\linewidth}
   \centering
   \includegraphics[width=5.5cm,height=5cm]{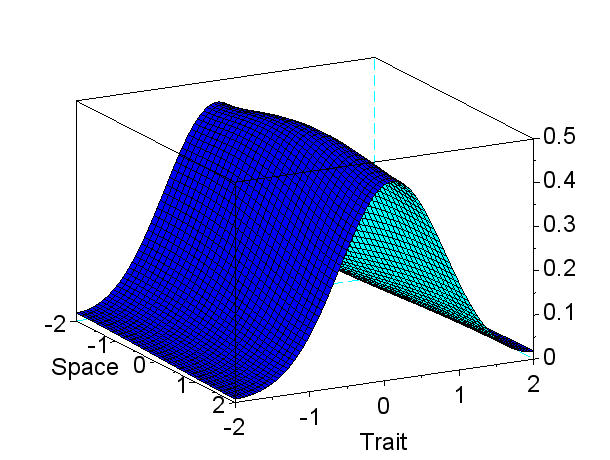} 
{$\varepsilon=0.1$}
  \end{minipage}
\hfill
\begin{minipage}[b]{0.3\linewidth}
   \centering
   \includegraphics[width=5.5cm,height=5cm]{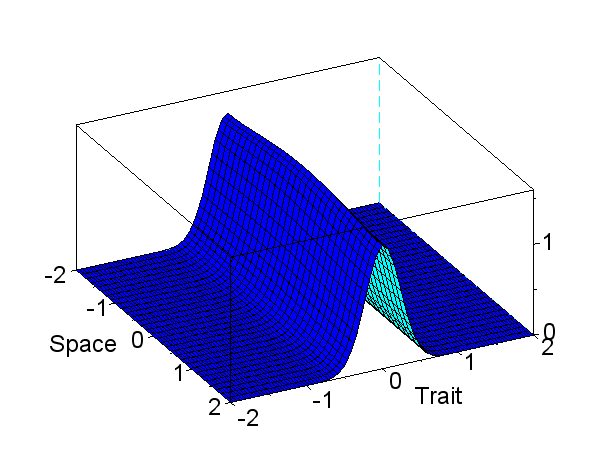} 
{$\varepsilon=0.01$}
  \end{minipage}
 \hfill
  \begin{minipage}[b]{0.3\linewidth}
   \centering
   \includegraphics[width=5.5cm,height=5cm]{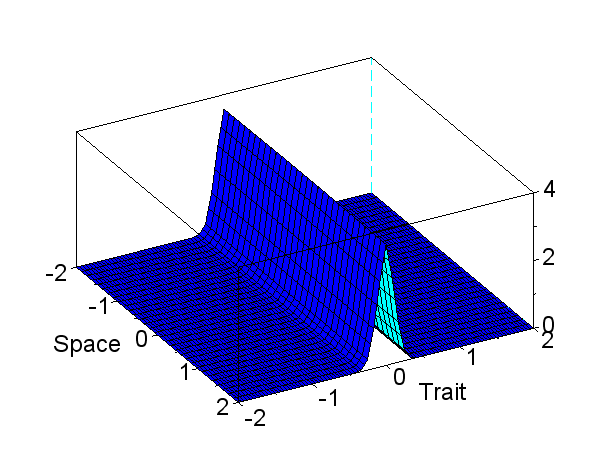}  
    {$\varepsilon=0.001$}
  \end{minipage}
  \caption{Variation of the numerical solutions of \eqref{E} with respect to $\varepsilon$. The others parameters are fixed as follow : $r=1$, $b=1$ $g=0.1$, $\Omega= ]-2,2[$ and $A=2$. We observe that the distribution of the population concentrates around the emergent trait $\theta = 0$.}
  \label{Fig1}
\end{figure}

Figure \ref{Fig1} illustrates the convergence of $n_\varepsilon$ to a Dirac mass as $\varepsilon$ goes to $0$. The only variation is with respect to the parameter $\varepsilon = 0.1, \ 0.01 \text{ and } 0.001$.

\begin{figure}[!h]
  \begin{minipage}[b]{0.3\linewidth}
   \centering
   \includegraphics[width=6cm,height=5cm]{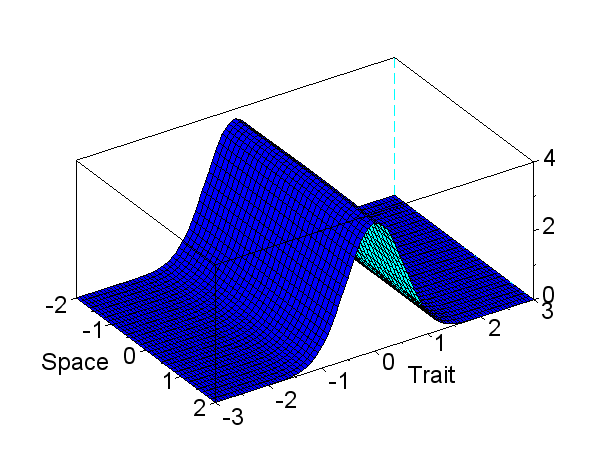} 
    {$g=0.01$}
  \end{minipage}
\hfill
  \begin{minipage}[b]{0.3\linewidth}
   \centering
   \includegraphics[width=6cm,height=5cm]{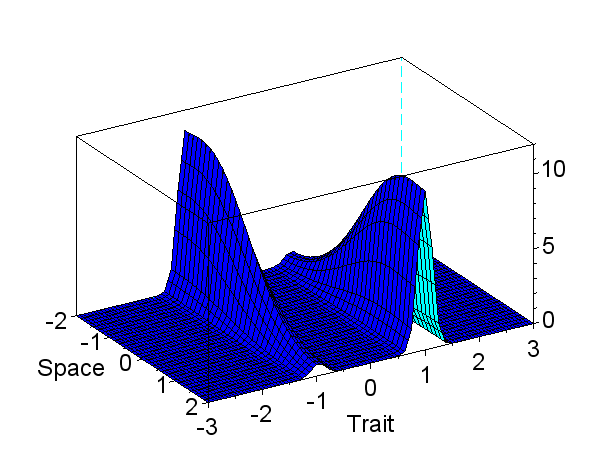} 
    {$g=1$}
  \end{minipage}
\hfill
\begin{minipage}[b]{0.3\linewidth}
   \centering
   \includegraphics[width=6cm,height=5cm]{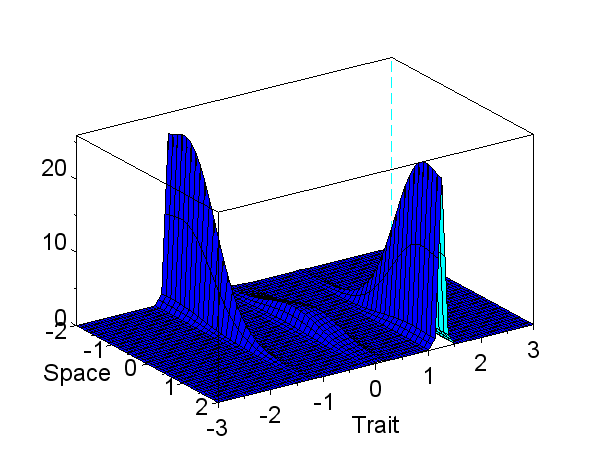} 
    {$g=5$}
  \end{minipage}
  \caption{Variation of the numerical solutions $n_\varepsilon$ of \eqref{E} with respect to $g$. The other parameters are fixed as follow:  $r=5$, $b=1$, $\varepsilon=0.01$, $\Omega= ]-2,2[$ and $A=3$. We recover that if $g$ is small then the population is monomorphic. For large values of $g$,   there exist several distinct emergent traits.}
  \label{Fig2}
\end{figure}

\begin{figure}[!h]
  \begin{minipage}[b]{0.3\linewidth}
   \centering
     \includegraphics[width=6cm,height=5cm]{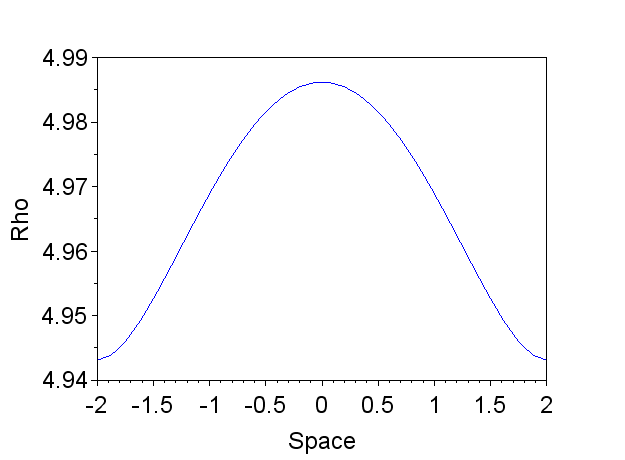} 
      {$g=0.01$}
  \end{minipage}
\hfill
  \begin{minipage}[b]{0.3\linewidth}
   \centering
     \includegraphics[width=6cm,height=5cm]{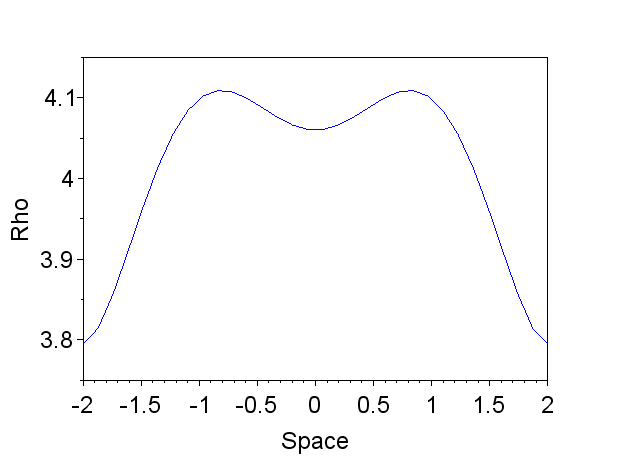} 
      {$g=1$}
  \end{minipage}
\hfill
\begin{minipage}[b]{0.3\linewidth}
   \centering
   \includegraphics[width=6cm,height=5cm]{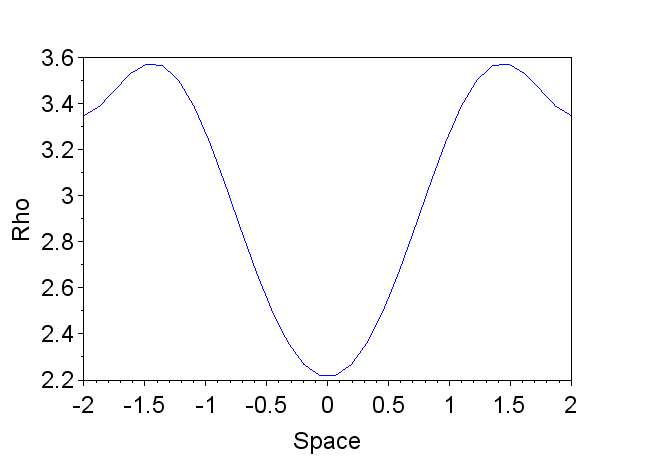} 
{$g=5$}
  \end{minipage}
  \caption{Variation of the numerical density $\rho_\varepsilon$ of \eqref{E} with respect to $g$. The other parameters are fixed as follow:  $r=5$, $b=1$, $\varepsilon=0.01$, $\Omega=]-2,2[$ and $A=3$.}
  \label{Fig22}
\end{figure}

Next, we focus on the qualitative properties established in Section \ref{sec:qual}, Figures \ref{Fig2} and \ref{Fig22} are numerical illustrations of Proposition \ref{prop:mono-uniq}. We fix $\Omega$ as a single connected component and we investigate the dependence on the parameter $g$. We recover numerically that as $g \to 0$ the limit density is monomorphic with  an emergent trait 
 at $\theta = 0$. For larger values of $g$, the phenotypic density concentrates around several distinct traits. For each simulations, we also provide the numerical distributions of $\rho_\varepsilon$ (Figure \ref{Fig22}), this density seems to be centered around the point which maximizes $R(\cdot, \overline{\theta})$ (where $\overline{\theta}$ is any emergent trait). Therefore, when the emergent trait is unique, $\rho_\varepsilon$ is increasing on $]-2, 0[$ and then decreasing on $ ]0, 2[$ whereas the spatial distribution can be more involved whenever there exist several distinct emergent traits.

\begin{figure}[!h]
  \begin{minipage}[b]{0.45\linewidth}
   \centering
   \includegraphics[width=8cm,height=6cm]{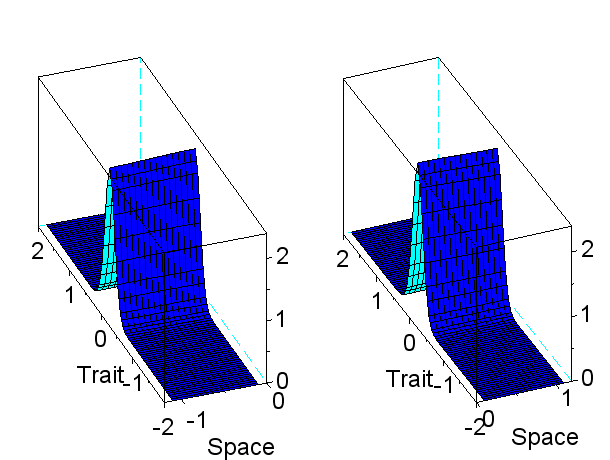} 
    {$\Omega=]-1.1, -0.1[ \cup ]0.1, 1.1[$}
  \end{minipage}
\hfill
\begin{minipage}[b]{0.45\linewidth}
   \centering
   \includegraphics[width=8cm,height=6cm]{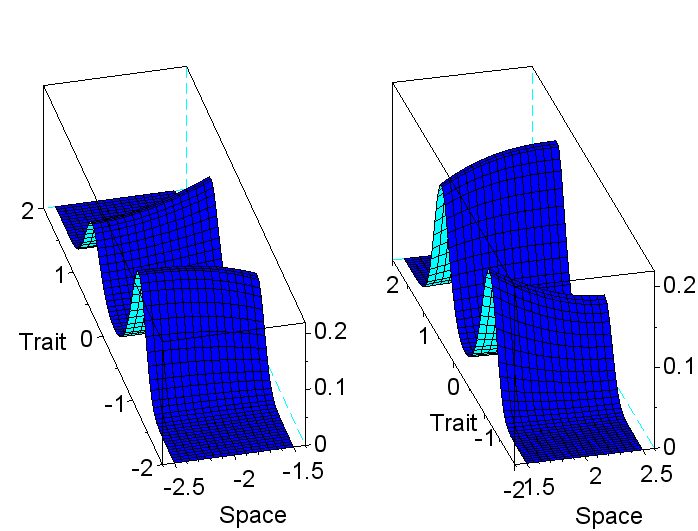} 
    {$\Omega = ]-2.5, -1.5[ \cup ]1.5, 2.5[$}
  \end{minipage}
  \caption{Variation of the numerical solutions $n_\varepsilon$ of \eqref{E} with respect to the distance between the two connected components of $\Omega$. The other parameters are fixed as follow:  $r=1$, $b=1$, $g=1$ $\varepsilon=0.01$ and $A=2$. We observe that for this set of parameters, increasing the distance   induces a polymorphic density population.}
  \label{Fig3}
\end{figure}

To conclude, we present in Figure \ref{Fig3} a numerical illustration of Proposition \ref{prop:polydist}. Here, the free parameter is the distance between the two connected components of $\Omega$. We recover that increasing the  distance between the two connected components  may lead to multiple emergent  traits.

\section{Existence of a non-trivial solution of \eqref{E}}\label{sec:existence}

As mentioned in the introduction, we recall that the proof of existence of a non-trivial solution is an adaptation of the proof of Theorem 2.1 by Lam and Lou in \cite{LamLou}. The major difference is the presence of the integral operator $L$. Therefore, we only provide the main elements dealing with the integral operator $L$. We also skip the proof of non-existence of a non-trivial solution, when \eqref{assumption} does not hold, which   follows   from classical arguments.

\begin{proof}[Proof of Theorem \ref{existence}]

We fix $\varepsilon \in  ]0, \varepsilon_0[$ (where $\varepsilon_0$ is given by \eqref{H5}). Let $\tau \in [0,1]$ and $n_\tau$ be a solution of 
\begin{equation} \label{Etau} \tag{$E_\tau$}
\left\lbrace 
\begin{aligned}
& -\partial_{xx} n_\tau - \varepsilon^2 \partial_{\theta \theta} n_\tau + L n_\tau = n_\tau \left(R - \tau \rho_\tau - (1-\tau) n_\tau \right) \qquad  \text{ in } \Omega \times ]-A, A[, \\
&\rho_\tau (x) = \int_{]-A,A[} n_\tau (x,\theta) d \theta \qquad \text{ in } \Omega, \\
&\partial_{\nu_x} n_\tau (x, \theta )= 0 \ \text{ on } \partial \Omega \times ]-A,A[, \quad \partial_{\nu_\theta} n_\tau (x, \pm A) = 0 \text{ on } \Omega \times \left\lbrace \pm A \right\rbrace. 
\end{aligned}
\right.
\end{equation}
It is well known that for $\tau = 0$, according to \eqref{H5}, there exists a non-trivial steady solution $n_0$. As in \cite{LamLou}, we prove that there exists a constant $C_\varepsilon>1$ (which may depend on $\varepsilon$) such that we have for any $\tau \in [0,1]$
\[ C_\varepsilon^{-1} \leq \int_{-A}^A \int_\Omega n_\tau dx d\theta  \leq C_\varepsilon. \]
Then one can conclude using a topological degree arguments. 

\bigbreak

\textbf{The lower bound. } Let $v_\tau$ be such that $n_\tau = \xi_\varepsilon v_\tau$ (where $\xi_\varepsilon$ is provided by \eqref{vpchap52}). First, we remark that 
\begin{align*}
     &L(v_\tau \xi_\varepsilon ) = v_\tau L(\xi_\varepsilon) + \xi_\varepsilon L(v_\tau) + \Lambda(v_\tau , \xi_\varepsilon) \\
     \text{ and } \quad & \Lambda (v_\tau , \xi_\varepsilon) (x) = \int_\Omega [ (v_\tau (x) - v_\tau(y) ) ( \xi_\varepsilon (y) - \xi_\varepsilon (x)) ] K(x-y) dy.
\end{align*}
Then $v_\tau$ is solution of 
\begin{align*} -\xi_\varepsilon \partial_{xx} v_\tau -2 \partial_x \xi_\varepsilon \partial_{x} v_\tau  - \varepsilon^2 \xi_\varepsilon \partial_{\theta \theta} v_\tau -2 \varepsilon^2 \partial_\theta \xi_\varepsilon \partial_{\theta} v_\tau + \xi_\varepsilon L(v_\tau ) +&\Lambda ( v_\tau, \xi_\varepsilon) + \mu_\varepsilon \xi_\varepsilon v_\tau\\
&= -v_\tau \xi_\varepsilon  [\tau \rho_\tau + (1-\tau ) n_\tau ].
\end{align*}
If we multiply it by $\frac{\xi_\varepsilon}{v_\tau}$, we obtain
\[ \frac{- \partial_x (\xi_\varepsilon^2 \partial_{x} v_\tau)  - \varepsilon^2 \partial_\theta (\xi_\varepsilon^2 \partial_{\theta} v_\tau) + \xi_\varepsilon^2 L ( v_\tau ) + \xi_\varepsilon \Lambda(v_\tau, \xi_\varepsilon)}{v_\tau }  =  \xi_\varepsilon ^2 (-\mu_\varepsilon - \tau \rho_\tau - (1-\tau ) n_\tau).\]
Next, we integrate over all the domain 
\begin{align*} 
&\int_{-A}^A\int_{\Omega}\frac{- \partial_x (\xi_\varepsilon^2 \partial_{x} v_\tau)  - \varepsilon^2 \partial_\theta (\xi_\varepsilon^2 \partial_{\theta} v_\tau) + \xi_\varepsilon^2 L ( v_\tau ) + \xi_\varepsilon \Lambda (v_\tau, \xi_\varepsilon)}{v_\tau} dx d\theta \\
&= \int_{-A}^A\int_{\Omega}\frac{- \partial_x (\xi_\varepsilon^2 \partial_{x} v_\tau)  - \varepsilon^2 \partial_\theta (\xi_\varepsilon^2 \partial_{\theta} v_\tau) }{v_\tau} dx d\theta+ \int_{-A}^A\int_{\Omega}\frac{ \xi_\varepsilon^2 L ( v_\tau  ) + \xi_\varepsilon \Lambda (v_\tau, \xi_\varepsilon) }{v_\tau} dx d\theta\\
&= I_1+I_2.
\end{align*}
We next prove that $I_1$ and $I_2$ are negative. For $I_1$, by an integration by part, we have 
\[ I_1 = \int_{-A}^A\int_{\Omega}\frac{- \partial_x (\xi_\varepsilon^2 \partial_{x} v_\tau)  - \varepsilon^2 \partial_\theta (\xi_\varepsilon^2 \partial_{\theta} v_\tau) }{v_\tau} dx d\theta = -\int_{-A}^A\int_{\Omega}\frac{ \xi_\varepsilon^2 }{v_\tau^2} \left(|\partial_{x} v_\tau|^2  + \varepsilon^2 |\partial_{\theta} v_\tau|^2\right) dx d\theta \leq 0.\]
For $I_2$, using that $K$ is even and the Fubini Theorem, we obtain 
\begin{align*}
 I_2&=\int_{-A}^A\int_{\Omega}\frac{ \xi_\varepsilon^2(x) L ( v_\tau  )(x) + \xi_\varepsilon(x) \Lambda(v_\tau, \xi_\varepsilon)(x) }{v_\tau(x)} dx d\theta \\
 &=  \int_{-A}^A \int_{\Omega} \frac{\xi_\varepsilon(x)}{v_\tau(x)} \int_\Omega [( v_\tau(x) - v_\tau(y)) \xi_\varepsilon(y)] K(x-y)dy dx  d\theta \\
 &=  -\int_{-A}^A \int_{\Omega}  \int_\Omega \left[\frac{ \xi_\varepsilon(x)\xi_\varepsilon(y) }{v_\tau (x) v_\tau(y)}( v_\tau(y) - v_\tau(x))^2 \right] K(x-y)dy dx d\theta - I_2
\end{align*}
We deduce that
\[I_2 = -\frac{1}{2} \int_{-A}^A \int_{\Omega}  \int_\Omega \left[\frac{ \xi_\varepsilon(x)\xi_\varepsilon(y) }{v_\tau (x) v_\tau(y)}( v_\tau(y) - v_\tau(x))^2 \right] K(x-y)dy dx  d\theta \leq 0.\]
Therefore, we have that 
\[\int_{-A}^A \int_\Omega \xi_\varepsilon^2 [ -\mu_\varepsilon - \tau \rho_\tau - (1-\tau) n_\tau] dx d\theta \leq 0.\]
Thanks to \eqref{H5}, we conclude that for $\varepsilon$ small enough
\[ \frac{|\lambda(\theta_0, 0)|}{2} \leq -\mu_\varepsilon = -\mu_\varepsilon \int_{-A}^A \int_\Omega \xi_\varepsilon^2 dxd\theta \leq \sup (\xi_\varepsilon^2) [\tau + (1-\tau)] \  \int_{-A}^A \int_{\Omega}n_\tau dx d\theta.\]

\bigbreak 

\textbf{The upper bound. } First, we remark that thanks to the Neumann boundary conditions and the parity of $K$, we have that 
\[ \int_{-A}^A \int_\Omega -\partial_{xx} \left(  n_\tau - \partial_{\theta \theta } n_\tau + L (n_\tau) \right) dx d\theta = 0. \]
Therefore, if we integrate \eqref{Etau} with respect to $x$ and $\theta$, we obtain 
\begin{align*}
\left( \frac{(1-\tau)}{2A|\Omega|} + \frac{\tau}{|\Omega|}\right) \ \| n_\tau \|_{L^1}^2& =  \frac{\tau}{|\Omega|} \left(\int_\Omega \rho_\tau dx \right)^2+ \frac{(1-\tau)}{2A|\Omega|} \left( \int_{-A}^A\int_\Omega n_\tau dx d\theta \right)^2 \\ &\leq  \tau \int_\Omega \rho_\tau^2 dx + (1-\tau)\int_{-A}^A \int_\Omega  n_\tau^2 dx d\theta \\
&= \int_{-A}^A \int_\Omega R n_\tau dx d\theta  \leq C_R \int_{-A}^A \int_\Omega  n_\tau dx d\theta  =C_R  \ \|n_\tau \|_{L^1}.
\end{align*}

\bigbreak

\textbf{Conclusion. } It follows that there exists a bounded non trivial solution $n_\varepsilon$ of \eqref{E}. Moreover, we have indeed proved that there exists constants $c,C>0$ such that 
\[ \frac{c}{\sup \xi_\varepsilon^2 } \leq \int_{-A}^A \int_\Omega n_\varepsilon dx d\theta \leq C.\]

\end{proof}

\section{Regularity results}\label{regularity}

In this section we prove Theorem \ref{Regularitythm}. The sub-sections correspond respectively to the proof of the item  \textit{1. 2. 3.} and \textit{4.} of Theorem \ref{Regularitythm}. But, we need an intermediate result: $\rho_\varepsilon$ is uniformly bounded.
\begin{lemma}\label{bdrho}
Under the assumptions \eqref{H1Chap5} -- \eqref{assumption}, we have that for all $\varepsilon < \varepsilon_0$
\[0 \leq \rho_\varepsilon \leq C_R,\]
(where $C_R$ is introduced in \eqref{H2Chap5}). Moreover, there exists $C>0$ such that for all $\varepsilon$ small enough
\[ \| \rho_\varepsilon \|_{W^{2,p}(\Omega)} \leq C.\]

\end{lemma}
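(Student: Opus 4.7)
The plan is to integrate the equation \eqref{E} over the trait variable $\theta$ and then apply a maximum principle argument on the resulting scalar equation for $\rho_\varepsilon$.

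First I would derive the equation satisfied by $\rho_\varepsilon$. Integrating \eqref{E} in $\theta \in ]-A,A[$ and using the Neumann boundary condition $\partial_{\nu_\theta} n_\varepsilon(x,\pm A)=0$, the $\varepsilon^2 \partial_{\theta\theta}$ term vanishes, and one obtains
\begin{equation*}
-\partial_{xx}\rho_\varepsilon + L\rho_\varepsilon \;=\; \int_{-A}^A R(x,\theta)\, n_\varepsilon(x,\theta)\,d\theta \;-\; \rho_\varepsilon(x)^2,
\end{equation*}
together with the Neumann condition $\partial_{\nu_x}\rho_\varepsilon = 0$ on $\partial \Omega$. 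The positivity $\rho_\varepsilon \geq 0$ is immediate from the fact that $n_\varepsilon > 0$ (a positive solution as constructed in Theorem \ref{existence}).

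For the $L^\infty$ upper bound, I would evaluate the above equation at a maximizer $x_0$ of $\rho_\varepsilon$. If $x_0$ is in the interior of one of the connected components $]a_i,b_i[$, then classically $-\partial_{xx}\rho_\varepsilon(x_0)\geq 0$; if $x_0 \in \partial \Omega$, the Neumann condition still forces $-\partial_{xx}\rho_\varepsilon(x_0)\geq 0$ (a boundary max in one dimension with zero normal derivative has nonpositive second derivative). The nonlocal term is also nonnegative at a maximum since
\begin{equation*}
L\rho_\varepsilon(x_0) = \int_\Omega [\rho_\varepsilon(x_0)-\rho_\varepsilon(y)]K(x_0-y)\,dy \;\geq\; 0.
\end{equation*}
Combining these with the bound $R\leq C_R$ from \eqref{H2Chap5}, I get
\begin{equation*}
0 \;\leq\; C_R\, \rho_\varepsilon(x_0) - \rho_\varepsilon(x_0)^2,
\end{equation*}
which yields $\rho_\varepsilon(x_0)\leq C_R$, hence $\|\rho_\varepsilon\|_{L^\infty} \leq C_R$.

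For the $W^{2,p}$ bound, I would rewrite the equation as $-\partial_{xx}\rho_\varepsilon = f_\varepsilon$ with
\begin{equation*}
f_\varepsilon(x) = \int_{-A}^A R\, n_\varepsilon\,d\theta - \rho_\varepsilon^2 - L\rho_\varepsilon.
\end{equation*}
Since $\|\rho_\varepsilon\|_{L^\infty}\leq C_R$ and $K\in L^\infty$ by \eqref{K}, the nonlocal term $L\rho_\varepsilon$ is uniformly bounded in $L^\infty(\Omega)$; the first two terms are also uniformly bounded in $L^\infty$. So $f_\varepsilon$ is uniformly bounded in $L^\infty(\Omega)\subset L^p(\Omega)$ for every $p\in[1,+\infty]$. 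Standard elliptic regularity for the one-dimensional Neumann problem $-\partial_{xx}\rho_\varepsilon = f_\varepsilon$ on each component, together with the uniform $L^\infty$ bound on $\rho_\varepsilon$ itself, then gives $\|\rho_\varepsilon\|_{W^{2,p}(\Omega)}\leq C$ uniformly in $\varepsilon$.

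The only mildly delicate point is the treatment of the maximum at the Neumann boundary and verifying that the nonlocal operator $L$, even though it connects different components of $\Omega$, still has the correct sign at a global maximizer; both are handled cleanly by the one-dimensional structure of $\Omega$ and the global (cross-component) definition of $L$.
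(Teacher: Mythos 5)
Your proof is correct and follows essentially the same route as the paper: integrate over $\theta$ to obtain the scalar equation for $\rho_\varepsilon$, bound it above via a maximum-principle argument (you simply make explicit the pointwise sign checks at the maximizer that the paper compresses into "by the maximum principle"), and then obtain the $W^{2,p}$ estimate by moving $L\rho_\varepsilon$ and the other $L^\infty$-controlled terms to the right-hand side and invoking standard elliptic regularity.
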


\begin{proof}
\textbf{The $L^\infty$-bounds. }   It is obvious that $\rho_\varepsilon > 0$. If we integrate \eqref{E} with respect to $\theta$, we obtain
\begin{equation}\label{eqrho}\tag{$E_\rho$}
    \left\lbrace
    \begin{aligned}
    & -\partial_{xx} \rho_\varepsilon + L \rho_\varepsilon= \int_{-A}^A R(\cdot, \theta) n_\varepsilon (\cdot, \theta) d\theta - \rho_\varepsilon^2 && \text{ in } \Omega, \\
    & \partial_{\nu_x} \rho_{\varepsilon} = 0 \text{ in } \partial \Omega.
    \end{aligned}
    \right.
\end{equation}
Recalling the $L^\infty$ bounds on $R$ \eqref{H2Chap5}, it follows:
\[ -\partial_{xx} \rho_\varepsilon +  L \rho_\varepsilon \leq C_R \rho_\varepsilon - \rho_\varepsilon^2.\]
We conclude thanks to the maximum principle that $\rho_\varepsilon \leq C_R$.

\vspace{0.5cm}

\noindent \textbf{The $W^{2,p}(\Omega)$ bounds.}  Thanks to the $L^\infty$ bounds on $R, K, \rho_{\varepsilon}$ (assumptions \eqref{H2Chap5}, \eqref{K} and the previous inequality), we may write \eqref{eqrho} on the following form
\[ -\partial_{xx} \rho_\varepsilon = f_\varepsilon\]
with $f_\varepsilon \in L^\infty(\Omega)$ uniformly bounded. The result follows from the standard elliptic estimates. 

\end{proof}

\begin{corollary}\label{CorollaryRho}
There exists a constant $C>0$ such that for all $\varepsilon$ small enough
\begin{equation}\label{diffrho}
| \partial_x \rho_\varepsilon | \leq C. 
\end{equation}
\end{corollary}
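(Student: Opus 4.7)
The plan is to deduce the bound on $\partial_x \rho_\varepsilon$ as an immediate consequence of Lemma \ref{bdrho}. Since Lemma \ref{bdrho} already gives a uniform bound
\[
\|\rho_\varepsilon\|_{W^{2,p}(\Omega)}\le C
\]
for every $p\in[1,+\infty]$, the task reduces to converting this $W^{2,p}$ control into an $L^\infty$ control on the first derivative.

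The natural tool is the Sobolev embedding. Because $\Omega\subset\mathbb{R}$ is a bounded one-dimensional domain composed of finitely many intervals, we have $W^{2,p}(\Omega)\hookrightarrow W^{1,\infty}(\Omega)$ for any $p>1$ (and in fact even $W^{2,\infty}(\Omega)\hookrightarrow C^1(\overline{\Omega})$ directly when one uses the $p=\infty$ estimate). Applying this embedding with, say, $p=\infty$, we get
\[
\|\partial_x \rho_\varepsilon\|_{L^\infty(\Omega)} \le C'\,\|\rho_\varepsilon\|_{W^{2,\infty}(\Omega)} \le C,
\]
which is precisely \eqref{diffrho}.

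There is essentially no obstacle here, and no need to redo the elliptic argument: the only care needed is that $\Omega$ may be disconnected, but since the Neumann boundary condition is imposed on each connected component and the $W^{2,p}$ estimate in the proof of Lemma \ref{bdrho} is obtained componentwise via the standard elliptic estimate applied to $-\partial_{xx}\rho_\varepsilon=f_\varepsilon$ with $f_\varepsilon\in L^\infty(\Omega)$, the embedding applies on each interval $]a_i,b_i[$ with a uniform constant depending only on the geometry of $\Omega$, yielding the desired uniform bound on $\partial_x\rho_\varepsilon$.
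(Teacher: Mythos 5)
Your proof is correct and is essentially the argument the paper implicitly relies on (no explicit proof of the corollary is given): the uniform $W^{2,p}(\Omega)$ bound from Lemma \ref{bdrho}, applied with $p=\infty$, directly controls $\|\partial_x\rho_\varepsilon\|_{L^\infty(\Omega)}$ as part of the norm, and the componentwise observation for the disconnected $\Omega$ is the right care to take.
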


\subsection{A Harnack inequality}

The first step to prove the first item of Theorem \ref{Regularitythm} is to prove the result in the interior of $\Omega \times ]-A,A[$.

\begin{theorem}\label{Harncak1}
For all $(x_0, \theta_0) \in \Omega \times ]-A,A[$, and $R_0 >0$ such that 
\[B_{3R_0} (x_0) \times B_{3\varepsilon R_0}( \theta_0) \subset \Omega \times ]-A, A[\]
there exists $C(R_0)>0$ such that 
\begin{equation}\label{in}
\underset{(x, \theta ) \in  B_{R_0} (x_0) \times B_{\varepsilon R_0}(\theta_0) }{\sup} n_\varepsilon (x,\theta) \leq C(R_0) \underset{ (x, \theta) \in  B_{R_0} (x_0) \times B_{\varepsilon R_0}( \theta_0) }{\inf} n_\varepsilon(x, \theta). 
\end{equation}
\end{theorem}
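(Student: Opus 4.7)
The approach is to reduce the anisotropic problem \eqref{E} to a standard uniformly elliptic equation in two variables via a rescaling of $\theta$, then apply the classical weak Harnack inequality together with local boundedness estimates, carefully handling the nonlocal term $L$.

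First, introduce the rescaling $\eta=(\theta-\theta_0)/\varepsilon$ and set $\tilde n(x,\eta):=n_\varepsilon(x,\theta_0+\varepsilon\eta)$. In the new variables, equation \eqref{E} becomes
\begin{equation*}
-\Delta_{(x,\eta)}\tilde n+L_x\tilde n=\bigl(\tilde R(x,\eta)-\rho_\varepsilon(x)\bigr)\tilde n,
\end{equation*}
posed on a domain containing $B_{3R_0}(x_0)\times B_{3R_0}(0)$, where $\tilde R(x,\eta):=R(x,\theta_0+\varepsilon\eta)$ and $L_x$ still acts only on the $x$-variable. By \eqref{H2Chap5} and Lemma \ref{bdrho}, all coefficients are uniformly bounded in $\varepsilon$, and the anisotropic rectangle $B_{R_0}(x_0)\times B_{\varepsilon R_0}(\theta_0)$ is transformed into the isotropic two-dimensional ball $B_{R_0}((x_0,0))$.

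Next, split the nonlocal operator as $L_x\tilde n(x,\eta)=k(x)\tilde n(x,\eta)-M\tilde n(x,\eta)$, where $k(x):=\int_\Omega K(x-y)\,dy$ is a bounded positive function and $M\tilde n(x,\eta):=\int_\Omega K(x-y)\tilde n(y,\eta)\,dy\geq 0$. The equation reads
\begin{equation*}
-\Delta_{(x,\eta)}\tilde n+c(x,\eta)\,\tilde n=M\tilde n(x,\eta),\qquad c:=k-\tilde R+\rho_\varepsilon,
\end{equation*}
with $c$ uniformly bounded. Since $M\tilde n\geq 0$, $\tilde n$ is a nonnegative super-solution of the uniformly elliptic operator $-\Delta+c\,\mathrm{Id}$, and the classical weak Harnack inequality (e.g.\ Gilbarg--Trudinger, Thm.~8.18) provides $p_0>0$ and $C_1=C_1(R_0,\|c\|_\infty)>0$ such that
\begin{equation*}
\left(\tfrac{1}{|B_{2R_0}|}\int_{B_{2R_0}}\tilde n^{p_0}\right)^{1/p_0}\leq C_1\inf_{B_{R_0}}\tilde n.
\end{equation*}
For the supremum, I would combine this with the local $L^\infty$ estimate for sub-solutions of $-\Delta u+cu=f$ (GT Thm.~8.17) applied with $f=M\tilde n$, yielding
\begin{equation*}
\sup_{B_{R_0}}\tilde n\leq C_2\bigl(\inf_{B_{R_0}}\tilde n+\|M\tilde n\|_{L^q(B_{2R_0})}\bigr).
\end{equation*}

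The main obstacle is controlling $\|M\tilde n\|_{L^q(B_{2R_0})}$ by a multiple of $\inf_{B_{R_0}}\tilde n$. The difficulty is that $M\tilde n(x,\eta)$ depends on values of $\tilde n(y,\eta)$ at $y\in\Omega$ possibly far from $B_{R_0}(x_0)$, so the multiplicative bound cannot come from a purely local estimate. The plan to overcome this is a bootstrap/covering argument: since $\Omega$ consists of finitely many bounded intervals, it can be covered by a finite (uniform in $\varepsilon$) collection of overlapping balls of radius $R_0$; applying the local bound above on each ball and chaining the resulting inequalities through the overlaps propagates the ratio $\sup/\inf$ across $\Omega$, at the same $\eta$, and thereby controls $\|M\tilde n\|_{L^\infty(B_{2R_0})}$ by $C\inf_{B_{R_0}(x_0)}\tilde n$. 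Combining all the estimates and returning to the original variables yields \eqref{in}. Ensuring that the cumulative constants from the covering remain uniform in $\varepsilon$ is the only non-standard step; everything else is classical elliptic regularity for the rescaled uniformly elliptic operator.
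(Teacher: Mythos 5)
Your rescaling of $\theta$ and the reduction to a standard uniformly elliptic operator with the nonlocal term moved to the right-hand side is exactly the paper's starting point, and the local estimate $\sup_{B_{R_0}}\tilde n \leq C(\inf_{B_{R_0}}\tilde n + \|M\tilde n\|)$ via GT Theorems 8.17--8.18 (the paper uses the essentially equivalent Theorems 9.20 and 9.22) is also correct. The gap is in the step you yourself flag as ``the main obstacle'': controlling $\|M\tilde n\|$ by $\inf_{B_{R_0}(x_0)}\tilde n$ through a chaining/covering argument. This cannot work as stated, for two reasons.

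First and most seriously, by assumption \eqref{H1Chap5} the domain $\Omega$ is in general a \emph{disjoint} union of intervals $]a_i,b_i[$, and the radius $R_0$ is constrained by $B_{3R_0}(x_0)\subset\Omega$, so all balls in the covering lie in a single connected component. There is no chain of overlapping balls within $\Omega$ joining two distinct components, so the ratio $\sup/\inf$ cannot be propagated across $\Omega$ by local elliptic estimates: the local operator $-\Delta+c\,\mathrm{Id}$ sees only the component it sits in. Precisely the fragmented case $m>1$ is the novel situation the paper is built around. Second, even on a connected $\Omega$ the chaining as you describe it is circular: each link in the chain reads $\sup_{B_j}\tilde n \leq C(\inf_{B_j}\tilde n + \|M\tilde n\|)$ with $\|M\tilde n\|_{L^\infty}\leq C_K\sup_\eta\int_\Omega\tilde n(\cdot,\eta)$, so after summing over a cover of $\Omega$ you obtain an inequality of the form $\sup_\eta\int_\Omega\tilde n \leq C(\inf_{B_{R_0}(x_0)}\tilde n + \sup_\eta\int_\Omega\tilde n)$ with $C\geq 1$, which yields nothing.

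The paper's proof resolves this with two ingredients you do not have. (i) Integrating equation \eqref{E'} in $x$ over all of $\Omega$ (using the Neumann conditions and the symmetry of $K$, which makes the nonlocal term integrate to a bounded multiple of $\int_\Omega\tilde n$) produces a one-dimensional differential inequality $-C I\leq -I''\leq CI$ for $I(\theta):=\int_\Omega\tilde n(x,\theta)dx$; the 1D Harnack inequality then gives $\sup_{B_{2R_0}}I\leq C\inf_{B_{2R_0}}I$ irrespective of the connectedness of $\Omega$. (ii) Keeping the nonlocal term on the left, the lower bound $K\geq c_K>0$ gives $(-\partial_{xx}-\partial_{\theta\theta})\tilde n + C\tilde n \geq c_K\inf_\theta I(\theta)$ in $\Omega\times B_{2R_0}$, and a Krylov-type estimate for nonnegative supersolutions with a uniform positive source then yields $\inf_\theta I \leq C\inf_{B_{R_0}(x_0,\theta_0)}\tilde n$. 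This is exactly the mechanism by which the nonlocal operator transmits information between the disconnected pieces, and it is the point your argument cannot reproduce by treating $M\tilde n$ purely as an inert forcing term.
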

Next, we prove that we can extend the solution thanks to a reflective argument(see Remark 9 p.275 in \cite{brezis}). \\ 
We perform the following change of variable: $\widetilde{n}(x,\theta) = n_\varepsilon(x, \varepsilon \theta)$. Therefore, we consider the following scaled equation 

\begin{equation}\label{E'} \tag{E'}
    \left\lbrace
    \begin{aligned}
    &-\partial_{xx} \widetilde{n} - \partial_{\theta\theta}\widetilde{n}  + L\widetilde{n} = \widetilde{n} [\widetilde{R} - \rho] && \text{ in } \Omega \times ] -\varepsilon^{-1} A , \varepsilon A[, \\
    &\partial_{\nu_x} \widetilde{n} = \partial_{\nu_\theta} \widetilde{n} =0 && \text{ in } \partial( \Omega \times ]-\varepsilon^{-1}A, \varepsilon A[.
    \end{aligned}
    \right.
\end{equation}
We have denoted by $\widetilde{R}$ the function $\widetilde{R}(x, \theta) = R(x, \varepsilon \theta)$. Remark that $\widetilde{R}$ still verifies \eqref{H2Chap5}.

\begin{proof}[Proof of Theorem \ref{Harncak1}]
Let $(x_0, \theta_0) \in \Omega \times ]-\varepsilon^{-1} A, \varepsilon^{-1}A[$ and a radius $R_0 >0$ be such that $B_{3R_0} (x_0, \theta_0) \subset \Omega \times ]-\varepsilon^{-1}A, \varepsilon^{-1}A[$.  If we denote by $f(x,\theta) = \int_\Omega \widetilde{n}(y, \theta) K(y-x)dy$, according to \eqref{K} it follows that $f \in L^\infty (B_{2R_0}(x_0, \theta_0))$. From the classical Harnack inequality, the Theorem 9.20 and 9.22 pp. 244-246 in \cite{Trudinger}, and using \eqref{H2Chap5} we deduce the existence of $C_1>0$ (depending on $R_0$) such that 
\begin{equation} \label{PfHarnack1}
\begin{aligned}
\underset{(x,\theta) \in B_{R_0}(x_0, \theta_0) }{\sup} \widetilde{n}(x, \theta) &\leq C_1 \underset{(x,\theta') \in B_{R_0}(x_0, \theta_0) }{\inf} \widetilde{n}(x,\theta') + C_1 \underset{(x,\theta") \in B_{2R_0}(x_0, \theta_0) }{\sup} |f(x,\theta")| \\
& \leq C_1 \underset{(x,\theta') \in B_{R_0}(x_0, \theta_0) }{\inf} \widetilde{n}(x,\theta')  + C_1 C_K \underset{\theta" \in B_{2R_0}(\theta_0)}{\sup } \int_{\Omega} \widetilde{n}(x, \theta") dx.
\end{aligned}
\end{equation} 
The main element of the proof is to prove the following claim:
\begin{equation}\label{AimHarnack}
\exists C>0 \quad \text{ such that } \quad \underset{\theta \in B_{2R_0}(\theta_0)}{\sup } \int_{\Omega} \widetilde{n}(x, \theta) dx  \leq C \underset{(x,\theta) \in B_{R_0}(x_0, \theta_0)}{\inf} \widetilde{n}(x,\theta).
\end{equation}
It is clear that if \eqref{AimHarnack} holds true, the conclusion follows. 

\bigbreak

First, we integrate \eqref{E'} with respect to $x$. It follows thanks to the Neumann boundary conditions that for all $\theta \in B_{3R_0}(\theta_0)$ we have
\begin{align*}
 - \partial_{\theta \theta } \int_\Omega \widetilde{n}(x,\theta) dx =& \frac{\int_\Omega \widetilde{n} (x,\theta) \left( \widetilde{R}(x,\theta) - \rho(x) - \int_\Omega K(x-y)dy \right) dx}{\int_\Omega \widetilde{n}(x,\theta) dx} \int_\Omega \widetilde{n}(x,\theta) dx \\
& + \frac{\int_{\Omega} \int_{\Omega} \widetilde{n}(y, \theta) K(x-y) dydx}{\int_\Omega \widetilde{n}(x,\theta) dx} \int_\Omega \widetilde{n}(x,\theta) dx.
\end{align*}
Thanks to the $L^\infty$-bounds on $ K,\widetilde{R} ,\rho$ (assumptions \eqref{H2Chap5}, \eqref{K} and Lemma \ref{bdrho}) and the Fubini Theorem, we have 
\begin{align*}
&-C \leq \frac{\int_\Omega \widetilde{n} (x,\theta) \left( \widetilde{R}(x,\theta) - \rho(x) - \int_\Omega K(x-y)dy \right) dx}{\int_\Omega \widetilde{n}(x,\theta) dx}< C\\
 \text{ and }\quad & \quad \frac{\int_{\Omega} \int_{\Omega} \widetilde{n}(y, \theta) K(x-y) dydx}{\int_\Omega \widetilde{n}(x,\theta) dx} \leq C_K |\Omega|.
\end{align*}
It follows 
\[-C \int_{\Omega } \widetilde{n}(x, \theta)dx \leq -\partial_{\theta \theta } \int_{\Omega} \widetilde{n}(x,\theta)dx \leq C \int_{\Omega } \widetilde{n} (x, \theta)dx .\]
Hence, we apply the Harnack inequality to $\theta \in B_{3R_0}(\theta_0) \mapsto \int_\Omega \widetilde{n}(x,\theta) dx$ into the ball $B_{2R_0}(\theta_0)$ and we deduce the existence of a constant $C_2>0$ such that 
\begin{equation}\label{PfHarnack2}
\underset{ \theta \in B_{2R_0}(\theta_0) }{\sup } \int_\Omega \widetilde{n}(x,\theta) dx \leq C_2 \underset{ \theta \in B_{2R_0}(\theta_0) }{\inf} \int_\Omega \widetilde{n}(x,\theta) dx.
\end{equation}
Next, thanks to the $L^\infty$-bounds on $ K, \widetilde{R}, \rho$ (assumptions \eqref{H2Chap5}, \eqref{K} and Lemma \ref{bdrho}), it follows that in $\Omega \times B_{2R_0}(\theta_0)$
\[c_K \underset{\theta \in B_{2R_0}(\theta_0)}{\inf} \int_{\Omega} \widetilde{n}(y, \theta) dy \leq c_K \int_{\Omega} \widetilde{n}(y, \theta) dy  \leq  \int_{\Omega} \widetilde{n}(y, \theta) K(x-y) dy   \leq \left(-\partial_{xx} -  \partial_{\theta \theta} \right) \widetilde{n} + C \widetilde{n}  .\]
From an inequality developed by Krylov (we refer to Theorem 7.1 p.565 in \cite{Harnack} and the reference therein), we deduce the existence of a constant $C_3>0$ such that 
\begin{equation}\label{PfHarnack3}
 \underset{B_{2R_0}(\theta_0)}{\inf} \int_{\Omega} \widetilde{n}(x, \theta) dx \leq C_3 \underset{B_{R_0}(x_0, \theta_0)}{\inf}  \widetilde{n}(x, \theta) .
\end{equation}
Combining the previous inequality with \eqref{PfHarnack2} and \eqref{PfHarnack3} yields to 
\begin{equation*}
 \underset{\theta \in  B_{2R_0}( \theta_0) }{\sup} \int_\Omega \widetilde{n}(x,\theta) dx \leq C_2 \underset{\theta \in  B_{2R_0}( \theta_0) }{\inf} \int_\Omega \widetilde{n}(x,\theta) dx \leq C_2 C_3 \underset{(x,\theta) \in  B_{R_0}(x_0, \theta_0) }{\inf}  \widetilde{n}(x,\theta)   .
\end{equation*}
This concludes the proof. 
\end{proof}

\subsection{Lipschitz estimates}

We prove \textit{2.} of Theorem \ref{Regularitythm} by the Bernstein method.

\begin{proof}[Proof of \textit{2.} of Theorem \ref{Regularitythm}]
We recall the main equation satisfied by $u_\varepsilon$:
\begin{equation}\label{PfLip1}
\frac{ -\partial_{xx} u_\varepsilon }{\varepsilon } - \frac{ |\partial_x u_\varepsilon |^2}{\varepsilon^2} - \varepsilon \partial_{\theta \theta }u_\varepsilon - |\partial_\theta u_\varepsilon|^2 + \int_{\Omega } [1- e^{\frac{u_\varepsilon(y) - u_\varepsilon(x)}{\varepsilon}} ] K(x-y)dy = R(x,\theta) - \rho_\varepsilon
\end{equation}
with Neumann boundary conditions. The first step is to differentiate \eqref{PfLip1} with respect to $x$ and multiply it by $\frac{\partial_x u_\varepsilon}{\varepsilon^2}$:
\begin{equation*}
\begin{aligned}
   & -\frac{\partial_{xxx} u_\varepsilon \partial_x u_\varepsilon}{\varepsilon^3}  - \frac{ \partial_x \left( \frac{ | \partial_x u_\varepsilon |^2}{\varepsilon^2} \right) \partial_x u_\varepsilon}{\varepsilon^2 } + \int_{\Omega} e^{\frac{ u_\varepsilon(y) - u_\varepsilon(x)}{\varepsilon}} K(x-y)dy  \frac{ \partial_x u_\varepsilon ^2}{\varepsilon^3} \\
    & - \frac{\partial_x |\partial_\theta u_\varepsilon |^2 \partial_x u_\varepsilon}{\varepsilon^2}  - \frac{ \partial_{x\theta\theta} u_\varepsilon \partial_x u_\varepsilon }{\varepsilon}   = \frac{ \left(\int_{\Omega} [e^{\frac{u_\varepsilon(y)-u_\varepsilon(x)}{\varepsilon}}-1] \partial_x K(x-y)dy + \partial_x R  - \partial_x \rho_\varepsilon \right)\partial_x u_\varepsilon }{\varepsilon^2}. 
\end{aligned}
\end{equation*}
Remarking that
\[\partial_{xxx} u_\varepsilon \partial_x u_\varepsilon = \frac{ \partial_{xx} (|\partial_x u_\varepsilon|^2)}{2} - (\partial_{xx} u_\varepsilon )^2  \qquad \text{ and } \qquad 
\partial_{x\theta\theta} u_\varepsilon \partial_x u_\varepsilon = \frac{ \partial_{\theta\theta} (|\partial_x u_\varepsilon|^2)}{2} - (\partial_{\theta x} u_\varepsilon )^2\]
yields to 
\begin{equation}\label{dxPfLip1}
\begin{aligned}
    &-\frac{\partial_{xx} (\frac{|\partial_x u_\varepsilon |^2}{\varepsilon^2})}{2\varepsilon} + \frac{(\partial_{xx} u_\varepsilon)^2}{\varepsilon^3}   - \frac{ \partial_x \left( \frac{ | \partial_x u_\varepsilon |^2}{\varepsilon^2} \right) \partial_x u_\varepsilon}{\varepsilon^2 }  \\
    & + \int_{\Omega }    e^{\frac{ u_\varepsilon(y) - u_\varepsilon(x)}{\varepsilon}} K(x-y)dy \frac{ ( \partial_x u_\varepsilon) ^2}{\varepsilon^3} - \frac{\partial_x |\partial_\theta u_\varepsilon |^2 \partial_x u_\varepsilon}{\varepsilon^2}  \\
    &+ \frac{ (\partial_{\theta x} u_\varepsilon )^2}{\varepsilon} -\frac{ \varepsilon \partial_{\theta\theta} (\frac{|\partial_x u_\varepsilon|^2}{\varepsilon^2})}{2}=\frac{\left( \int_{\Omega} [e^{\frac{u_\varepsilon(y)-u_\varepsilon(x)}{\varepsilon}}-1] \partial_x K(x-y)dy +  \partial_x R  - \partial_x \rho_\varepsilon \right) \partial_x u_\varepsilon }{\varepsilon^2}. 
\end{aligned}
\end{equation}
In the second step, we differentiate \eqref{PfLip1} with respect to $\theta$ and multiply by $\partial_\theta u_\varepsilon$. With computations similar to the ones presented above, we find 
\begin{equation}\label{dthetaPfLip1}
\begin{aligned}
    -\frac{\partial_{xx} (|\partial_\theta u_\varepsilon |^2)}{2\varepsilon} &+ \frac{(\partial_{\theta x} u_\varepsilon)^2}{\varepsilon} -  \partial_\theta \frac{ | \partial_x u_\varepsilon |^2}{\varepsilon^2} \partial_\theta  u_\varepsilon - \frac{\varepsilon}{2} \partial_{\theta\theta} (|\partial_\theta u_\varepsilon|^2) + \varepsilon (\partial_{\theta \theta} u_\varepsilon )^2  - \partial_\theta |\partial_\theta u_\varepsilon |^2 \partial_\theta u_\varepsilon \\
    &+ \int_{\Omega }\frac{ \left(  \partial_\theta u_\varepsilon (x)^2- \partial_\theta u_\varepsilon (x) \partial_\theta u_\varepsilon(y) \right)}{\varepsilon} e^{\frac{ u_\varepsilon(y) - u_\varepsilon(x)}{\varepsilon}} K(x-y)dy = \partial_\theta R  \partial_\theta u_\varepsilon . 
\end{aligned}
\end{equation}
Next, we introduce 
\begin{equation}\label{peps}
    p_\varepsilon(x,\theta) = \frac{ |\partial_x u_\varepsilon (x,\theta) |^2}{\varepsilon^2} + |\partial_\theta u_\varepsilon(x,\theta) |^2.
\end{equation}
If we combine \eqref{dxPfLip1} and \eqref{dthetaPfLip1}   and we rewrite it in terms of $p_\varepsilon$ , it follows 
\begin{equation}\label{pPfLip1}
\begin{aligned}
&-\frac{\partial_{xx} p_\varepsilon }{2\varepsilon } -\frac{\varepsilon \partial_{\theta \theta} p_\varepsilon }{2} + \frac{1}{\varepsilon}\int_{\Omega} [p_\varepsilon (x,\theta) -\partial_{\theta}u_\varepsilon(x, \theta)\partial_{\theta}u_\varepsilon (y, \theta )]e^{\frac{u_\varepsilon(y, \theta) - u_\varepsilon(x, \theta)}{\varepsilon }} K(x-y)dy\\
& - \frac{ \partial_x p_\varepsilon \partial_x u_\varepsilon}{\varepsilon^2} - \partial_\theta p_\varepsilon \partial_\theta u_\varepsilon  + \frac{2 (\partial_{x\theta} u_\varepsilon)^2}{\varepsilon} + \frac{(\partial_{xx}u_\varepsilon)^2 }{\varepsilon^3 } + \varepsilon (\partial_{\theta \theta } u_\varepsilon)^2 \\
&=\left(  \int_{\Omega} [e^{\frac{u_\varepsilon(y)-u_\varepsilon(x)}{\varepsilon}}-1] \partial_x K(x-y)dy  + \partial_x R -\partial_x\rho_\varepsilon  \right) \frac{\partial_x u_\varepsilon }{\varepsilon^2} + \partial_\theta R \partial_\theta u_\varepsilon.
\end{aligned}
\end{equation}
Let $(x_\varepsilon, \theta_\varepsilon ) $ be such that 
\[\underset{ (x,\theta) \in \Omega \times ]-A,A[}{\sup} p_\varepsilon (x, \theta) = p_\varepsilon (x_\varepsilon, \theta_\varepsilon) .\]
Thanks to the Neumann boundaries conditions, we deduce that $(x_\varepsilon, \theta_\varepsilon) \notin \partial\Omega \times \partial \left(]-A,A[\right)$. Therefore, we distinguish three cases: either $(x_\varepsilon, \theta_\varepsilon) \in \Omega \times ]-A,A[$ or $(x_\varepsilon, \theta_\varepsilon) \in \partial \Omega\times ]-A,A[$ or $(x_\varepsilon, \theta_{\varepsilon}) \in \Omega \times \left\lbrace \pm A \right\rbrace$.\\
\textbf{Case 1 : $(x_\varepsilon, \theta_\varepsilon ) \in \Omega \times ]-A,A[.$ } First, we bound the right-hand-side of \eqref{pPfLip1}. Indeed, thanks to the Harnack inequality (first item of Theorem \ref{Regularitythm}) and the $L^\infty$-bounds on the derivative of $K$, $R$ and $\rho_\varepsilon$ (assumptions  \eqref{H2Chap5}, \eqref{K} and \eqref{diffrho} in Corollary \ref{CorollaryRho}), it follows that
\begin{equation}\label{RHS}
\left(  \int_{\Omega} [e^{\frac{u_\varepsilon(y)-u_\varepsilon(x)}{\varepsilon}}-1] \partial_x K(x-y)dy  + \partial_x R -\partial_x \rho_\varepsilon \right) \frac{\partial_x u_\varepsilon }{\varepsilon^2} + \partial_\theta R \partial_\theta u_\varepsilon \leq \frac{C \sqrt{p}}{\varepsilon}.
\end{equation}
Next, we evaluate \eqref{pPfLip1} at $(x_\varepsilon, \theta_\varepsilon)$. We claim that
\begin{equation}\label{LHS}
\begin{aligned}
&-\partial_{xx}p_\varepsilon (x_\varepsilon , \theta_\varepsilon ) \geq 0,   \quad -\partial_{\theta \theta}p_\varepsilon (x_\varepsilon , \theta_\varepsilon ) \geq 0, \quad \partial_x p_\varepsilon (x_\varepsilon, \theta_\varepsilon) = \partial_\theta p_\varepsilon (x_\varepsilon, \theta_\varepsilon)=0 \\
\text{ and } \quad &  \frac{1}{\varepsilon}\int_{\Omega} [p_\varepsilon (x_\varepsilon,\theta_\varepsilon) -\partial_{\theta}u_\varepsilon(x_\varepsilon, \theta_\varepsilon)\partial_{\theta}u_\varepsilon (y, \theta_\varepsilon )]e^{\frac{u_\varepsilon(y, \theta_\varepsilon) - u_\varepsilon(x_\varepsilon, \theta_\varepsilon)}{\varepsilon }} K(x_\varepsilon-y)dy\geq 0.
\end{aligned}
\end{equation}
Indeed, the first inequalities follow easily since $p(x_\varepsilon, \theta_\varepsilon) = \max \  p_\varepsilon$ and the last inequality holds true thanks to the following computations
\begin{align*}
& \frac{1}{\varepsilon}\int_{\Omega} [p_\varepsilon (x_\varepsilon,\theta_\varepsilon) -\partial_{\theta}u_\varepsilon(x_\varepsilon, \theta_\varepsilon)\partial_{\theta}u_\varepsilon (y, \theta_\varepsilon )]e^{\frac{u_\varepsilon(y, \theta_\varepsilon) - u_\varepsilon(x_\varepsilon, \theta_\varepsilon)}{\varepsilon }} K(x_\varepsilon-y)dy\\
&\geq \frac{1}{\varepsilon} \left[ \int_{\Omega} p_\varepsilon (x_\varepsilon,\theta_\varepsilon) e^{\frac{u_\varepsilon(y, \theta_\varepsilon) - u_\varepsilon(x_\varepsilon, \theta_\varepsilon)}{\varepsilon }} K(x_\varepsilon-y)dy  \right. \\
&- \left. \frac{1}{2} \int_{\Omega} \partial_{\theta}u_\varepsilon^2(x_\varepsilon, \theta_\varepsilon)e^{\frac{u_\varepsilon(y, \theta_\varepsilon) - u_\varepsilon(x_\varepsilon, \theta_\varepsilon)}{\varepsilon }} K(x_\varepsilon-y)dy \right. \\
&\left.- \frac{1}{2} \int_{\Omega} \partial_{\theta}u_\varepsilon^2(y, \theta_\varepsilon)e^{\frac{u_\varepsilon(y, \theta_\varepsilon) - u_\varepsilon(x_\varepsilon, \theta_\varepsilon)}{\varepsilon }} K(x_\varepsilon-y)dy  \right] \\ 
&\geq \frac{1}{2\varepsilon} \left[ \int_{\Omega} p_\varepsilon (x_\varepsilon,\theta_\varepsilon) e^{\frac{u_\varepsilon(y, \theta_\varepsilon) - u_\varepsilon(x_\varepsilon, \theta_\varepsilon)}{\varepsilon }} K(x_\varepsilon-y)dy -  \int_{\Omega} p_\varepsilon(y, \theta_\varepsilon)e^{\frac{u_\varepsilon(y, \theta_\varepsilon) - u_\varepsilon(x_\varepsilon, \theta_\varepsilon)}{\varepsilon }} K(x_\varepsilon-y)dy  \right] \\
&\geq 0.
\end{align*}
We deduce thanks to \eqref{RHS} and \eqref{LHS} (and noticing $\frac{2(\partial_{x \theta} u_\varepsilon)^2(x_\varepsilon, \theta_\varepsilon)}{\varepsilon} \geq 0$) that
\begin{equation*}\label{pPfLip2}
\begin{aligned}
\frac{1}{2\varepsilon} \left[  \frac{ \partial_{xx} u_\varepsilon (x_\varepsilon, \theta_\varepsilon)}{\varepsilon}  + \varepsilon\partial_{\theta\theta} u_\varepsilon (x_\varepsilon, \theta_\varepsilon) \right]^2 &\leq  \frac{1}{\varepsilon}\left[ \left(\frac{ \partial_{xx} u_\varepsilon (x_\varepsilon, \theta_\varepsilon)}{\varepsilon} \right)^2 + (\varepsilon\partial_{\theta\theta} u_\varepsilon (x_\varepsilon, \theta_\varepsilon))^2 \right]\\
&\leq \frac{C \sqrt{p_\varepsilon (x_\varepsilon, \theta_\varepsilon)}}{\varepsilon}.   
\end{aligned}
\end{equation*}
Hence, using the original equation \eqref{PfLip1}, we deduce that 
\begin{equation}
    \left[- p_\varepsilon (x_\varepsilon, \theta_\varepsilon) + \int_{\Omega } (1 - e^{\frac{u_\varepsilon(y, \theta_\varepsilon) - u_\varepsilon(x_\varepsilon, \theta_\varepsilon) }{\varepsilon} } )K(x_\varepsilon - y)dy - R(x_\varepsilon, \theta_\varepsilon) + \rho_\varepsilon (x_\varepsilon) \right]^2 \leq C\sqrt{p_\varepsilon (x_\varepsilon, \theta_\varepsilon)}.
\end{equation}
Thanks to the $L^\infty$-bounds on $K$, $R$ and $\rho_\varepsilon$ (assumption \eqref{H2Chap5} \eqref{K} and Lemma \ref{bdrho}), it follows that $p_\varepsilon(x_\varepsilon, \theta_\varepsilon)$ is uniformly bounded with respect to $\varepsilon$. The conclusion follows.\\
\textbf{Case 2 :  $(x_\varepsilon, \theta_\varepsilon) \in \partial \Omega \times ]-A,A[.$  } First remark that in this case, $p_\varepsilon (x_\varepsilon , \theta_\varepsilon ) = |\partial_\theta u_\varepsilon (x_\varepsilon, \theta_\varepsilon) |^2$. We claim that $p_\varepsilon$ verifies also the Neumann boundary conditions at $(x_\varepsilon, \theta_\varepsilon)$. Indeed, according to the Neumann boundary conditions satisfied by $u_\varepsilon$, we can use a reflective argument and differentiate $p_\varepsilon$ on the boundary. We obtain  
\[\partial_x p_\varepsilon (x_\varepsilon, \theta_\varepsilon)  =  \frac{2 \partial_x u_\varepsilon (x_\varepsilon, \theta_\varepsilon) \partial_{xx} u_\varepsilon (x_\varepsilon, \theta_\varepsilon)}{\varepsilon^2} +  2 \partial_\theta u_\varepsilon (x_\varepsilon, \theta_\varepsilon) \partial_{x \theta} u_\varepsilon (x_\varepsilon, \theta_\varepsilon)  = 0 \]
because 
\[ \partial_x u_\varepsilon (x_\varepsilon, \theta_\varepsilon) = 0  \quad \text{ and } \quad  \partial_{x\theta } u_\varepsilon ( x_\varepsilon, \theta_\varepsilon ) = 0.\]
Since $p(x_\varepsilon, \theta_\varepsilon) = \max p_\varepsilon$, we deduce that 
\[ -\partial_{xx} p_\varepsilon (x_\varepsilon, \theta_\varepsilon) \geq 0.\]
We conclude that \eqref{LHS} and \eqref{RHS} hold also true in this case and the conclusion follows from the same computations as in the previous case. \\
\textbf{Case 3 :  $(x_\varepsilon, \theta_\varepsilon) \in \Omega \times \left\lbrace \pm A \right\rbrace$. } This case is treated in the same manner as the previous case. 
\end{proof}

\subsection{The bounds on $\rho_\varepsilon$}

We recall the equation \eqref{eqrho} satisfied by $\rho_\varepsilon$:
\begin{equation*}\tag{$E_\rho$}
\left\lbrace
\begin{aligned}
&-\partial_{xx} \rho_\varepsilon + L \rho_\varepsilon = \int_{-A}^A R(x,\theta) n_\varepsilon (x, \theta) d\theta - \rho_\varepsilon^2 &&\quad \text{ in } \Omega, \\
&\partial_{\nu_x} \rho_\varepsilon = 0 && \quad \text{ on } \partial \Omega.
\end{aligned}
\right.
\end{equation*}

\begin{proof}[Proof of \textit{3.} of Theorem \ref{Regularitythm}]
The uniform bound from above and the $W^{2,p}$ bounds on $\rho_\varepsilon$ are already provided in   Lemma \ref{bdrho}. Here we prove the uniform lower bound.
We start by proving that $0 <c \leq \sup \rho_\varepsilon $. Next, we prove that $c < \rho_\varepsilon$ holds true in the whole domain $\Omega$. \\
\textbf{A lower bound on $\sup \rho_\varepsilon$. } 
Assume by contradiction that there exists a sequence $\varepsilon_k$ such that 
\[ \varepsilon_k \underset{ k \rightarrow +\infty}{\longrightarrow } 0 \quad  \text{ and } \quad  \sup\rho_{\varepsilon_k} \underset{ k \rightarrow +\infty}{\longrightarrow } 0 . \]
Next, if we multiply \eqref{E} by $\xi_{\varepsilon_k}$ (introduced in \eqref{vpchap52}) and we integrate by part, we obtain 
\[ \mu_\varepsilon \int_{-A}^A \int_{\Omega} n_{\varepsilon_k} \xi_{\varepsilon_k} dx d\theta = - \int_{-A}^A \int_{\Omega}  \rho_{\varepsilon_k} n_{\varepsilon_k} \xi_{\varepsilon_k} dx d\theta .\]
We deduce thanks to \eqref{H5} that for $k$ large enough, it holds 
\[ \frac{|\lambda(\theta_0,0)|}{2} \leq -\mu_{\varepsilon_k} \leq \sup \rho_{\varepsilon_k} \frac{\int_{-A}^A \int_{\Omega} n_{\varepsilon_k} \xi_{\varepsilon_k} dx d\theta }{\int_{-A}^A \int_{\Omega} n_{\varepsilon_k} \xi_{\varepsilon_k} dx d\theta } .\]
It is in contradiction with the hypothesis $\sup \rho_{\varepsilon_k} \underset{k \rightarrow  + \infty}{\longrightarrow} 0$. Therefore, there exists a constant $c>0$ such that 
\begin{equation}\label{bdrho2}
    \forall \varepsilon \in ]0, \varepsilon_0[, \qquad c \leq \sup \rho_\varepsilon .
\end{equation}

\vspace{0.5cm}

\noindent \textbf{The lower bound on $\rho_\varepsilon$ in the whole domain $\Omega$. } 
 Let $\varepsilon < \varepsilon_0$ and $x_0 \in \Omega$ be such that
\[\rho_\varepsilon(x_0) = \sup \rho_\varepsilon.\]
We conclude thanks to \eqref{bdrho2} and the Lipschitz estimates obtained in the second item of Theorem \ref{Regularitythm} that for all $x \in \Omega$
\[\rho_\varepsilon (x)  =\int_{-A}^{A} e^\frac{u_\varepsilon (x, \theta)}{\varepsilon} d\theta  = \int_{-A}^{A} e^\frac{u_\varepsilon (x, \theta) - u_\varepsilon (x_0, \theta) + u_\varepsilon (x_0, \theta)}{\varepsilon} d\theta \geq  \rho_\varepsilon(x_0) e^{-C} \geq ce^{-C} .\]

\end{proof}

\subsection{The bounds on $u_\varepsilon$}

\begin{proof}[Proof of 4. of Theorem \ref{Regularitythm}]
First, we prove that there exists $a>0$ such that $-a < u_\varepsilon$.  Thanks to the third item of Theorem \ref{Regularitythm}, we know that there exists $c>0$ such that for all $\varepsilon$ small enough we have
\[ c < \int_{-A}^A n_\varepsilon(x,\theta) d \theta .\]
We deduce the existence of $(x_0, \theta_0) \in \Omega \times ]-A,A[$ such that 
\[\frac{c}{2A} \leq n_\varepsilon(x_0, \theta_0).\]
Hence, it follows 
\[ \varepsilon \log \left( \frac{c}{2A} \right) \leq u_\varepsilon (x_0, \theta_0).\]
We conclude thanks to the Lipschitz estimates established in the second item of Theorem \ref{Regularitythm} that
\begin{equation}\label{bd2}
    \forall (x, \theta ) \in \Omega \times ]-A,A[, \quad -a \leq -2CA + \varepsilon  \left[ \log\left(\frac{c}{2A}\right) - C |\Omega|\right]   \leq u_\varepsilon (x, \theta).
\end{equation}

\vspace{0.5cm}

Next, we prove that $\underset{\varepsilon \rightarrow 0 }{\lim} \ \underset{ (x,\theta) \in \Omega \times ]-A,A[}{\sup} u_\varepsilon(x,\theta) \leq 0 $.\\
We prove it by contradiction. Assume that there exists $a>0$ and sequences $\varepsilon_k, (x_k, \theta_k)$ such that 
\[ \varepsilon_k \underset{k\rightarrow +\infty}{\longrightarrow} 0 \qquad \text{ and } \qquad  u_{\varepsilon_k}(x_k, \theta_k)>a.\]
Using the Lipschitz estimates provided by the second item of Theorem \ref{Regularitythm}, it follows for all $ \theta \in  \left( B_{\frac{a}{4 C}}(\theta_k) \cap  ]-A,A[\right)$
\[u_{\varepsilon_k}(x_k, \theta) = u_{\varepsilon_k}(x_k, \theta)- u_{\varepsilon_k}(x_k, \theta_k) + u_{\varepsilon_k}(x_k, \theta_k) \geq - C|\theta - \theta_k| + a \geq \frac{a}{2}\]
where $C$ corresponds to the Lipschitz estimate given by \eqref{Lipeq}. We deduce that 
\[ \rho_{\varepsilon_k} (x_k) \geq \min  \left( 2A,\frac{a}{4C} \right) e^\frac{a}{2\varepsilon_k}.\]
We conclude that $\underset{ k \rightarrow +\infty}{\liminf} \rho_{\varepsilon_k}(x_k) = +\infty$. This is in contradiction with the $L^\infty$ bounds on $\rho_\varepsilon$ established in the third item of Theorem \ref{Regularitythm}.

\end{proof}

\section{Convergence to the Hamilton-Jacobi equation}\label{convergence}

\begin{proof}[Proof of Theorem \ref{main}]
We prove here the 3 items of Theorem \ref{main}. 

\textbf{\textit{Proof of 1. Convergence of $\rho_\e$.}}  Thanks to the third item of Theorem \ref{Regularitythm}, it follows that for $\varepsilon$ small enough $0 < c \leq \rho_\varepsilon \leq C$ and $\|\rho_\varepsilon \|_{W^{2,p}(\Omega)} \leq C$. We deduce from the classical Sobolev injection (see \cite{brezis}) that $\rho_\varepsilon$ converges, along subsequences, strongly in $W^{1,p}(\Omega)$ and in particular uniformly to $\rho$ and $\rho$ verifies 
\[ 0 < c \leq \rho \leq C.\]

\vspace{0.5cm}

\textbf{\textit{Proof of 2. } } 
(i) {\bf Convergence to the Hamilton-Jacobi equation.} The convergence of $u_\e$ to a viscosity solution of  the Hamilton-Jacobi equation \eqref{HJChap5} can be obtained thanks to the regularity results given in Theorem \ref{Regularitythm} and  a perturbed test function argument,   following the heuristic argument provided in Section \ref{subsec:result} .

From the Lipschitz estimates and the bounds established in the second and the fourth items of Theorem \ref{Regularitythm}, we deduce thanks to the Arzela-Ascoli Theorem that up to a subsequence, $(u_\varepsilon)_{\varepsilon>0}$ converges locally uniformly to some continuous function $u$. Moreover, the limit function $u$ does not depend on $x$.\\
We prove that $u$ is a viscosity solution of 
\[ \left\lbrace
\begin{aligned}
&- |\partial_\theta u |^2 =- \lambda(\theta, \rho), \\
& \partial_{\nu_\theta} u (\pm A) = 0,
\end{aligned}
\right. \]
with $\lambda(\theta, \rho)$ the principal eigenvalue of \eqref{vpchap5}. First, we focus on the equation in the interior of the domain and then we treat the boundary conditions.

\vspace{0.4cm}

\textbf{The interior equation. } We recall that for a fixed value $\theta$, Proposition \ref{propev} provides the existence of a sequence of principal eigenvalues $\lambda(\theta, \rho_\varepsilon )$ associated with a sequence of positive eigenfunctions $(\psi_\varepsilon^\theta)_{\varepsilon>0}$ of the operator $-\partial_{xx} + L - (R(x, \theta) - \rho_\varepsilon)$ with Neumann boundary conditions:
\begin{equation}\label{vpchap53}
    \text{i.e.  }\left\lbrace
    \begin{aligned}
   & -\partial_{xx} \psi_\varepsilon^\theta  + L(\psi_\varepsilon^\theta) - (R(x, \theta) - \rho_\varepsilon) \psi_\varepsilon^\theta = \lambda (\theta, \rho_\varepsilon) \psi_\varepsilon^\theta \quad &&\text{ in } \Omega,\\
    &\partial_{\nu_x} \psi_\varepsilon^\theta = 0 &&\text{ on } \partial \Omega.    
    \end{aligned}
    \right.
\end{equation}
Since $\psi_\varepsilon^\theta>0$, we introduce
\[ \Psi_\varepsilon^\theta = \ln ( \psi_\varepsilon^\theta) .\]
Let  $\phi$ be a test function such that   $u - \phi $ has a strict maximum at  $\theta \in ]-A,A[$. Then, there exists $(x_\varepsilon, \theta_\varepsilon) \in \overline{\Omega} \times ]-A,A[$ such that 
\[ \theta_\varepsilon \underset{ \varepsilon \rightarrow 0}{\longrightarrow} \theta \quad \text{ and } \underset{(x, \theta) \in \overline{\Omega } \times ]-A,A[}{\max} u_\varepsilon (x, \theta) - \phi(\theta) - \varepsilon \Psi^{\theta_\varepsilon}_\varepsilon (x)=  u_\varepsilon (x_\varepsilon, \theta_\varepsilon) - \phi(\theta_\varepsilon) - \varepsilon \Psi^{\theta_\varepsilon}_\varepsilon (x_\varepsilon) .\]
We distinguish two cases: either $x_\varepsilon \in \Omega$ or $x_\varepsilon \in \partial \Omega$. \\
\textbf{Case 1: $x_\varepsilon \in \Omega$. } Since $u_\varepsilon$ is a classical solution of \eqref{EHCchap5}, we deduce that it is also a viscosity solution, therefore 
\begin{align*}
&-\frac{ \partial_{xx} (\phi(\theta_\varepsilon) + \varepsilon \Psi^{\theta_\varepsilon}_\varepsilon (x_\varepsilon))}{\varepsilon}- \frac{[\partial_x (\phi(\theta_\varepsilon) + \varepsilon \Psi^{\theta_\varepsilon}_\varepsilon (x_\varepsilon))]^2}{\varepsilon^2} + \int_{\Omega} [ 1 - e^{ \Psi_\varepsilon^{\theta_\varepsilon} (y) - \Psi_\varepsilon^{\theta_\varepsilon} (x_\varepsilon)}]K(x_\varepsilon -y)dy \\
&- \varepsilon \partial_{\theta\theta} (\phi(\theta_\varepsilon) + \varepsilon \Psi^{\theta_\varepsilon}_\varepsilon (x_\varepsilon) ) - [\partial_\theta (\phi(\theta_\varepsilon) + \varepsilon \Psi^{\theta_\varepsilon}_\varepsilon (x_\varepsilon))]^2 - R(x_\varepsilon , \theta_\varepsilon ) + \rho_\varepsilon (x_\varepsilon)\leq 0. 
\end{align*}
Remarking that $\phi$ does not depend on $x$ and the $\theta$ value is fixed in $\Psi^{\theta_\varepsilon}_\varepsilon$, we deduce that 
\begin{equation}
 \begin{aligned}
    &- \partial_{xx} \Psi^{\theta_\varepsilon}_\varepsilon (x_\varepsilon ) - [ \partial_x \Psi^{\theta_\varepsilon}_\varepsilon (x_\varepsilon) ]^2 + \int_{\Omega} [ 1 - e^{ \Psi_\varepsilon^{\theta_\varepsilon} (y) - \Psi_\varepsilon^{\theta_\varepsilon} (x_\varepsilon)}]K(x_\varepsilon -y)dy - R(x_\varepsilon , \theta_\varepsilon) + \rho_\varepsilon(x_\varepsilon)  \\
    &- \varepsilon \partial_{\theta \theta } \phi (\theta_\varepsilon ) - [\partial_\theta \phi (\theta_\varepsilon) ]^2 \leq 0.
\end{aligned}
\end{equation}
Next, we observe that \eqref{vpchap53} implies 
\[ - \partial_{xx} \Psi^{\theta_\varepsilon}_\varepsilon (x_\varepsilon ) - [ \partial_x \Psi^{\theta_\varepsilon}_\varepsilon (x_\varepsilon) ]^2 + \int_{\Omega} [ 1 - e^{ \Psi_\varepsilon^{\theta_\varepsilon} (y) - \Psi_\varepsilon^{\theta_\varepsilon} (x_\varepsilon)}]K(x_\varepsilon -y)dy - R(x_\varepsilon , \theta_\varepsilon) + \rho_\varepsilon (x_\varepsilon)  = \lambda(\theta_\varepsilon, \rho_\varepsilon ) .\]
Therefore, passing to the limit $\varepsilon \rightarrow 0$, thanks to the continuity of $ \lambda(\theta, \rho )$ with respect to $\theta$ and $\rho$ (Proposition \ref{propev}), it follows that
\[ - [ \partial_\theta \phi (\theta) ]^2 \leq - \lambda(\theta, \rho).\]
\textbf{Case 2 : $x_\varepsilon \in \partial \Omega$. } First, we remark that in this case, 
\[ -\partial_x u_\varepsilon (x_\varepsilon, \theta_\varepsilon) = -\partial_x \Psi_\varepsilon^{\theta_\varepsilon}(x_\varepsilon) = 0.\]
Therefore, we deduce that 
\[ -\partial_x [u_\varepsilon(x_\varepsilon, \theta_\varepsilon) - \phi(\theta_\varepsilon) - \varepsilon \Psi_\varepsilon^{\theta_\varepsilon} (x_\varepsilon) ] = 0.\]
Moreover, since $(u_\varepsilon - \phi - \varepsilon \Psi_\varepsilon^{\theta_\varepsilon})(x_\varepsilon, \theta_\varepsilon) = \max (u_\varepsilon - \phi - \varepsilon \Psi_\varepsilon^{\theta_\varepsilon})$, we have firstly by a reflective argument that 
\begin{equation}\label{incase2}
-\partial_{xx} (u_\varepsilon - \phi - \varepsilon \Psi_\varepsilon^{\theta_\varepsilon}) (x_\varepsilon , \theta_\varepsilon ) \geq 0, 
\end{equation}
and secondly, we have
\begin{equation}\label{incase22}
u_\varepsilon(y, \theta_\varepsilon ) - u_\varepsilon (x_\varepsilon , \theta_\varepsilon) \leq \varepsilon[ \Psi_\varepsilon (y)  - \Psi_\varepsilon (x_\varepsilon)].
\end{equation}
The inequalities \eqref{incase2} and \eqref{incase22} lead to
\begin{align*}
& - \partial_{xx} \varepsilon\Psi_\varepsilon^{\theta_\varepsilon}(x_\varepsilon) \leq -\partial_{xx} u_\varepsilon (x_\varepsilon, \theta_\varepsilon) \\
\text{ and } \quad &  \int_{\Omega} [1-e^{\Psi_\varepsilon^{\theta_\varepsilon} (y) - \Psi_\varepsilon^{\theta_\varepsilon} (x_\varepsilon)}]K(x_\varepsilon - y ) dy \leq \int_{\Omega} [1-e^{\frac{u_\varepsilon (y , \theta_\varepsilon) - u_\varepsilon (x_\varepsilon, \theta_\varepsilon)}{\varepsilon}}]K(x_\varepsilon - y ) dy .
\end{align*}
Therefore, the conclusion follows from similar computation as above.  

\vspace{0.4cm}

\textbf{The boundary conditions. } Let $\phi$ be a test function such that $u-\phi$ has a strict maximum at $A$ (the proof works the same for $-A$). Then, there exists $(x_\varepsilon , \theta_\varepsilon) \in \overline{\Omega} \times [-A, A ]$ such that 
\[   \theta_\varepsilon \underset{ \varepsilon \to 0}{\rightarrow }  A \quad \text{ and } \quad \underset{ (x, \theta ) \in  \overline{\Omega} \times [-A, A ]}{\max} u_\varepsilon - \phi = (u_\varepsilon - \phi)(x_\varepsilon , \theta_\varepsilon).    \]
We distinguish two cases: $(x_\varepsilon, \theta_\varepsilon) \in \overline{\Omega} \times ]-A, A[$ or $(x_\varepsilon, \theta_\varepsilon) \in \overline{\Omega} \times \left\lbrace  A \right\rbrace$.
\begin{description}
\item [Case 1 :]$(x_\varepsilon, \theta_\varepsilon) \in \overline{\Omega} \times ]-A, A[$. In this case, by a similar analysis as above, we deduce that 
\[ -[\partial_\theta \phi(\theta_\varepsilon)]^2 \leq -\lambda(\theta_\varepsilon, \rho) + o_\varepsilon(1).\]
\item [Case 2 :]$(x_\varepsilon, \theta_\varepsilon) \in \overline{\Omega }\times \left\lbrace A \right\rbrace$. In this case, since the maximum is reached on the boundary, we deduce thanks to the boundary conditions of $u_\varepsilon$
\[-\partial_{\nu_\theta} \phi(\theta_\varepsilon) =  \partial_{\nu_\theta} (u_\varepsilon - \phi) (x_\varepsilon, \theta_\varepsilon) \geq 0.\]
\end{description}
Taking the inferior limit, we conclude that
\[ \min ( -[\partial_\theta \phi(A)]^2 + \lambda(A, \rho) , \partial_{\nu_\theta } \phi (\pm A) )\leq  0,  \] 
which corresponds to the boundary condition in the viscosity sense.

Finally, $u$ is a sub-solution of \eqref{HJChap5} in a viscosity sense. With similar arguments, $u$ is also a super-solution. We conclude that $u$ is a viscosity solution of \eqref{HJChap5}.

(ii) {\bf The constraint.} The constraint $\max_\theta u(\theta)=0$ in \eqref{HJChap5} is a consequence of the Hopf-Cole transformation \eqref{H-C} and the fact that $\rho_\e$ remains bounded   away from $0$, uniformly in $\e$. 

If the constraint does not hold true, it follows, thanks to \eqref{boundueps}, that $\underset{\theta \in [-A,A]}{\sup}u(\theta)< -a < 0$.  Hence for $\varepsilon$ small enough and thanks to the uniform convergence of $u_\e$ to $u$, we deduce that $\underset{(x,\theta) \in \Omega \times [-A,A]}{ \max}u_\varepsilon(x,\theta)< -\frac{a}{2} $, which implies that $\rho_\varepsilon < c$ for $ \varepsilon$ sufficiently small. This is in contradiction with the third item of Theorem \ref{Regularitythm}. We conclude that $\underset{\theta \in [-A,A]}{ \max} u(\theta) = 0$. 

 \vspace{0.5cm}
 
\textbf{\textit{Proof of 3. The convergence of $n_\e$ and the inclusion property.}} The first inclusion property in  \eqref{inclusion}   can be obtained thanks to the Hopf-Cole transformation \eqref{H-C} and the uniform convergence of $u_\e$ to $u$. The second inclusion property in  \eqref{inclusion} is a consequence of the Hamilton-Jacobi equation \eqref{HJChap5}  and the fact that the zero level set of $u$ is also the set of the maximum points of $u$. We detail these arguments below.

 Thanks to the $L^\infty$ bounds on $\rho_\varepsilon$ (third point of Theorem \ref{Regularitythm}), we deduce that 
\[ c \leq \| n_\varepsilon \|_{L^1(\Omega \times ]-A,A[)} \leq C. \]
It follows that $n_\varepsilon$ converges up to a subsequence and in the sense of measures to a measure $n$. The measure $n$ is non-negative and not trivial. We next prove that 
\[ \mathrm{supp}\  n_\varepsilon \subset \Omega \times \left\lbrace \theta \in ]-A,A[ \ | \ u(\theta) = 0 \right\rbrace  .\]
Indeed, let $\phi \in C^\infty_c (\Omega \times ]-A,A[)$ be any positive test function such that
\begin{equation}\label{support}
\mathrm{supp} \ \phi \subset \Omega \times  \left\lbrace \theta \in ]-A,A[ \ | \ u(\theta) = 0 \right\rbrace^c.
\end{equation}
We prove that $\int_{\Omega} \int_{-A}^A \phi(x, \theta ) n(x, \theta) dx d\theta = 0$.\\
To this end, we introduce $-a = \underset{\mathrm{ supp } \ \phi}{\sup} u $. According to \eqref{support}, it follows that $a>0$. We deduce that for all $\varepsilon$ small enough and all $(x,\theta) \in \Omega \times \mathrm{supp } \ \phi$, we have 
\begin{equation}
    u_\varepsilon(x, \theta) \leq -\frac{a}{2}.
\end{equation}
We conclude that 
\begin{align*}
\int_\Omega \int_{-A}^A \phi (x,\theta) n(x,\theta) d\theta dx  &= \int_\Omega \int_{\mathrm{ supp} \ \phi(x, \cdot)} \phi (x,\theta) n(x,\theta) d\theta dx \\
&= \underset{ \varepsilon \rightarrow 0}{\lim} \ \int_\Omega \int_{\mathrm{ supp} \ \phi(x, \cdot)} \phi (x,\theta) n_\varepsilon(x, \theta) d\theta dx \\
&= \underset{ \varepsilon \rightarrow 0}{\lim} \int_\Omega  \int_{\mathrm{ supp} \ \phi(x, \cdot)} \phi (x,\theta) e^{\frac{u_\varepsilon(x, \theta)}{\varepsilon}} d\theta  dx  \\
&\leq \underset{ \varepsilon \rightarrow 0}{\lim} \int_\Omega  \int_{\mathrm{ supp} \ \phi(x, \cdot)} \phi (x,\theta) e^{\frac{-a}{2\varepsilon}} d\theta  dx  \\
&= 0.
\end{align*}
We finally prove that 
\begin{equation}
\label{eq:sub0R}
\left\lbrace u(\theta) = 0 \right\rbrace \subset \left\lbrace \lambda(\theta,\rho)=0\right\rbrace.
\end{equation}
To this end, note first that since $u$ is a Lipschitz continuous function, it is a.e. differentiable. Therefore, \eqref{HJChap5} implies that
$$
\lambda(\theta,\rho)\geq 0, \quad \text{for a.e. $\theta$.}
$$
Moreover, since $\lambda$ is continuous with respect to $\theta$ the above inequality holds indeed for all $\theta$. To prove \eqref{eq:sub0R}, it is therefore enough to prove that for any $\theta_0$ such that $u(\theta_0)=0$, we have
$$
\lambda(\theta_0,\rho)\leq 0.
$$
This property can be derived by testing the equation in  \eqref{HJChap5} against the test function $\varphi(\theta)\equiv 0$ at the point $\theta_0$ for a viscosity  subsolution criterion.

This concludes the proof of \textit{3.} 
\end{proof}

\section*{Appendix A- Existence and properties of $\lambda(\theta, \rho)$}\label{AppA}

\addcontentsline{toc}{section}{Appendix A- Existence and properties of $\lambda(\theta, \rho)$}

In this section, we first establish a Hopf Lemma. It is obtained by a classical argument but for the sake of completeness and because of the presence of the less classical non-local operator $L$, we provide the proof. Next, we verify the existence of $\lambda(\theta, \rho)$. To finish, we provide the proof of some properties of $\lambda$ already stated in the article (namely Propositions \ref{derivlambda} and \ref{propo:lambda:result2}). 

\subsection*{A.1- A Hopf Lemma}

In this section we prove the following Hopf Lemma
\begin{lemma}[Hopf Lemma]\label{Hopf}
Let $u$ be a smooth function defined on $\Omega$ such that 
\begin{equation}\label{HypHopf}
-\partial_{xx} u + L(u) + c(x) u \geq 0 ,
\end{equation}
with $c$ a non-negative bounded smooth function. If there exists $x_0 \in \partial \Omega$ such that $\underset{x \in \Omega}{\min} \ u(x) = u(x_0) < 0$ then either $u$ is constant or 
\begin{equation}
     \partial_{\nu_x} u(x) < 0.
\end{equation} 
\end{lemma}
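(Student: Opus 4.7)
The strategy is to exploit the nonlocal operator's structure to extract a strictly positive source term in a purely local inequality, so that a standard parabolic barrier suffices. First I would reduce to a non-negative function by setting $w := u - u(x_0)$. Since $L$ annihilates constants, $w \geq 0$ satisfies $w(x_0)=0$ and
\[
-\partial_{xx} w + Lw + cw \geq -c\,u(x_0) \geq 0,
\]
using $u(x_0)<0$ and $c\geq 0$. It therefore suffices to establish $\partial_{\nu_x}w(x_0)<0$ under the assumption that $u$ is not constant.

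The next step is a strong maximum principle: if $w(x_1)=0$ at some interior point $x_1$, then $\partial_{xx}w(x_1)\geq 0$, so the inequality forces $Lw(x_1)\geq 0$; but $Lw(x_1)=-\int_\Omega w(y) K(x_1-y)\,dy\leq 0$ since $w\geq 0$ and $K>0$, which together with $K>0$ yields $w\equiv 0$, contradicting the non-constancy of $u$. Hence $w>0$ throughout $\Omega$. Now comes the key manipulation: write $Lw(x) = \kappa(x) w(x) - (K*w)(x)$ with $\kappa(x):=\int_\Omega K(x-y)\,dy$, so that the inequality becomes
\[
-\partial_{xx} w + \tilde c(x)\, w \geq (K*w)(x), \qquad \tilde c := \kappa+c \geq 0.
\]
Because $w>0$ in $\Omega$ and $K\geq c_K>0$, the right-hand side is bounded below on $\overline\Omega$ by a strictly positive constant $g_0 := c_K \int_\Omega w >0$.

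For the barrier step, let $B_R(x_c)\subset\Omega$ be an inscribed interval with $x_0\in\partial B_R$ (such a ball exists since $\Omega$ is a finite union of open intervals), and take the parabolic barrier $h(x) := R^2 - (x-x_c)^2$, which satisfies $h(x_0)=0$, $h>0$ on $B_R$, $-h''=2$, and $\partial_{\nu_x}h(x_0)=-2R<0$. Setting $\phi := w-\varepsilon h$ on $B_R$, I would compute
\[
-\partial_{xx}\phi + \tilde c\,\phi \;=\; \bigl(-\partial_{xx}w + \tilde c\,w\bigr) - \varepsilon\bigl(-h''+\tilde c\,h\bigr) \;\geq\; g_0 - \varepsilon\bigl(2+\|\tilde c\|_\infty R^2\bigr),
\]
which is strictly positive for $\varepsilon>0$ small enough. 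On $\partial B_R$ we have $h=0$, hence $\phi=w\geq 0$. The classical maximum principle for the \emph{local} operator $-\partial_{xx}+\tilde c$ (with $\tilde c\geq 0$) then yields $\phi\geq 0$ on $\overline{B_R}$; taking the outward normal derivative at $x_0$ and using $w(x_0)=h(x_0)=0$ gives $\partial_{\nu_x}w(x_0) \leq \varepsilon\,\partial_{\nu_x}h(x_0) = -2\varepsilon R < 0$.

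The main obstacle is clearly the splitting in Step~3: a direct attempt to run the classical Hopf barrier argument on the full nonlocal operator $-\partial_{xx}+L+c$ fails because the integral term $Lh$ of any localized barrier $h$ is difficult to control against the gain $-h''$. The trick is to avoid this altogether — by absorbing $\kappa w$ into a non-negative zeroth-order coefficient and moving the convolution $K*w$ to the right-hand side as a genuine strictly positive source, we reduce to a purely local inequality with a positive right-hand side, for which even a crude parabolic barrier is strong enough.
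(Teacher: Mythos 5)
Your proof is correct, and it takes a genuinely different route from the paper's. The paper argues directly against the full nonlocal operator $-\partial_{xx}+L+c$: it builds the exponential barrier $v(x)=\bigl[e^{-3\lambda/4}-e^{-\lambda\max(0,|x|^2-1/4)}\bigr]\mathds 1_{B(0,1)}(x)$, proves $-\partial_{xx}v>0$ by calculus, proves $Lv>0$ for $\lambda$ large by observing that $v$ is strictly negative in the bulk while vanishing at the boundary so the nonlocal term picks up a positive bulk contribution, and then applies the comparison principle for the full operator to $u(x_0)-u-\varepsilon v$ on the annulus $B(0,1)\setminus B(0,3/4)$. Your argument instead uses the algebraic identity $Lw=\kappa\,w-K*w$ with $\kappa(x)=\int_\Omega K(x-y)\,dy$ to rewrite the inequality as a purely local one, $-\partial_{xx}w+\tilde c\,w\geq K*w$ with $\tilde c=\kappa+c\geq 0$, and then observes that after the strong-maximum-principle step the convolution source $K*w\geq c_K\int_\Omega w>0$ is bounded below by a strict positive constant. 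This means any textbook barrier (here the quadratic $R^2-(x-x_c)^2$) closes the argument via the weak maximum principle for $-\partial_{xx}+\tilde c$, with no need to control $L$ applied to the barrier at all. What the paper's approach buys is robustness to nonlocal operators that do not split as multiplication-minus-convolution (e.g.\ the fractional Laplacian the authors mention as a future direction); what yours buys is considerable simplicity, and it exposes why the Hopf lemma is in fact easier in the presence of a positive-kernel zero-order nonlocal term than for the pure Laplacian: the nonlocality manufactures a strictly positive source as soon as the solution is non-trivial. One small remark: you only need $\int_\Omega w>0$, which already follows from $w\geq 0$, $w\not\equiv 0$ and continuity, so the strict interior positivity $w>0$ you establish in Step 2 is slightly more than required — but it is of course correct, and mirrors the strong-maximum-principle step in the paper's own proof.
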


The proof is in the spirit of the classical proof of the Hopf Lemma (see \cite{Evans-PDE} p.250).

\begin{proof}
Up to a scaling, there is no loss of generality if we assume that $B(0,1) \subset \Omega$ and $x_0=1$. 
Next, we define 
\[v(x) = \left[ e^{-\frac{3}{4}\lambda } - e^{-\lambda \max (0,|x|^2 - \frac{1}{4})} \right] 1_{B(0,1)}(x),\]
for $\lambda$ a positive constant. We underline that $v(x) = e^{-\frac{3}{4}\lambda}-1$ in $B(0,\frac{1}{2})$. Next, we claim that by taking $\lambda$ large enough, for all $x \in B(0,1) \backslash B(0, \frac{3}{4})$ there holds
\begin{equation}\label{HopfPf1}
\begin{aligned}
&- \partial_{xx} v (x) = 2 \lambda e^{-\lambda \max(0,|x|^2-\frac{1}{4}) } (2 \lambda |x|^2 - 1) > 0 \\
\text{ and }\quad & L v(x) \geq \int_{B(0,1)} e^{ - \lambda \max (0,|y|^2 - \frac{1}{4}) }K(x-y)dy - \int_\Omega e^{- \lambda\max (0,|x|^2 - \frac{1}{4})}  K(x-y)dy > 0.
\end{aligned}
\end{equation}
The first inequality of \eqref{HopfPf1} follows from a straightforward computation. For the second inequality, according to the assumption \eqref{K}, we have 
\[\underset{\lambda \rightarrow +\infty}{\liminf} \int_{B(0,1)} e^{ - \lambda \max (0,|y|^2 - \frac{1}{4}) }K(x-y)dy - \int_\Omega e^{- \lambda\max (0,|x|^2 - \frac{1}{4})}  K(x-y)dy \geq c_K |B(0, \frac{1}{2})| > 0.\]
Therefore, if $\lambda$ is large enough, \eqref{HopfPf1} holds true.\\

Next, we claim that if $u$ is not constant, the minimum can not be reached in the interior of $\Omega$. Otherwise,  we deduce the existence of $x_1 \in \Omega$ such that $u(x_1) = \underset{x \in \Omega}{\min } \ u <0 $. Since $c$ is  nonnegative, we have
\[-\partial_{xx} u (x_1) \leq 0, \quad Lu(x_1) < 0 \quad \text{ and } \quad c(x_1)u(x_1) \leq 0.\]
Therefore, we deduce that 
\[ -\partial_{xx} u (x_1)  + Lu(x_1)  + c(x_1)u(x_1) <0 .\]
This is in contradiction with the assumption \eqref{HypHopf}. \\
We deduce that $\underset{ x \in \partial B(0, \frac{3}{4})}{\min} \ u(x_0) - u(x) <0$. Next, taking $\varepsilon$ small enough, there holds that 
\[ \forall x \in \partial B(0, \frac{3}{4}), \quad u(x_0) - u(x) - \varepsilon v(x) < 0.\]
Since $v=0$ on $\partial B(0,1)$ and by definition of $x_0$, it follows
\[\forall x \in \partial B(0, 1), \quad u(x_0) - u(x) - \varepsilon v (x) \leq 0.\]
Moreover, according to \eqref{HypHopf}, \eqref{HopfPf1} and remarking that $u(x_0)-\varepsilon v(x) \geq 0$ for $\varepsilon$ small enough, we deduce that for all $x \in B(0,1) \backslash B(0, \frac{3}{4})$  
\[ -\partial_{xx} (u(x_0) - u(x) - \varepsilon v(x))+L(u(x_0) - u - \varepsilon v)(x)+ c(x) (u(x_0) - u(x) -\varepsilon  v(x)) \leq 0.\]
We deduce thanks to the maximum principle that $u(x_0) - u(x) - \varepsilon  v(x) \leq 0$, for all $x \in B(0,1) \backslash B(0, \frac{3}{4})$. We conclude that 
\[\partial_{\nu_x} u (x_0) \leq  -\partial_{\nu_x} \varepsilon v (x_0) = -\varepsilon 2\lambda e^{-\frac{3\lambda}{4}} < 0 .\]
\end{proof}

\subsection*{A.2- Existence of a principal eigenpair}

\begin{proposition}\label{propev}
Under the hypothesis \eqref{H1Chap5}--\eqref{K}, for a fixed bounded smooth function $\rho$ and a fixed value $\theta \in ]-A,A[$ there exists a principal eigenvalue $\lambda(\theta, \rho)$ of the operator $-\partial_{xx} \psi + L(\psi) - (R(\cdot, \theta) - \rho)\psi$ with Neumann boundary conditions
\begin{equation} \label{EV}
    \text{ i.e. } \left\lbrace 
    \begin{aligned}
    &-\partial_{xx} \psi^{\theta} + L(\psi^{\theta}) - (R (\cdot, \theta) - \rho)\psi^{\theta} = \lambda(\theta, \rho) \psi^{\theta} && \text{ in } \Omega,\\
    &\partial_{\nu_x} \psi = 0 \text{ on } \partial \Omega.
    \end{aligned}
    \right.
\end{equation}
The associated eigenfunction $\psi^{\theta}$ has a constant sign and is unique up to multiplication by a constant. Moreover, the function $\lambda(\theta, \rho)$ and $\psi^{\theta}$ are $C^1$ with respect to $\theta$ and $\rho \in H^1(\Omega)$.
\end{proposition}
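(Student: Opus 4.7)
My plan is to obtain the eigenpair by the direct method of the calculus of variations applied to the Rayleigh quotient \eqref{RayleighQuotient}, to derive positivity and simplicity from a strong minimum principle for the integro-differential operator, and to obtain smooth dependence on parameters from the implicit function theorem using simplicity.

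First, I will set
\[
\lambda(\theta,\rho)=\inf\bigl\{\mathcal{R}(\theta,\rho,\phi)\ :\ \phi\in H^1(\Omega),\ \|\phi\|_{L^2}=1\bigr\}.
\]
Using the $L^\infty$ bound on $R$ together with the Sobolev embedding $H^1(\Omega)\hookrightarrow L^\infty(\Omega)$ in one dimension (which controls $\rho$) and the non-negativity of the nonlocal quadratic form $\tfrac12\iint[\phi(x)-\phi(y)]^2K(x-y)dx\,dy$, the numerator of $\mathcal R$ controls $\|\partial_x\phi\|_{L^2}^2$ modulo lower-order $L^2$ terms, so the infimum is finite and any minimizing sequence is bounded in $H^1(\Omega)$. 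By Rellich--Kondrachov, along a subsequence it converges weakly in $H^1$ and strongly in $L^2$ to some $\psi^\theta$. Weak lower semicontinuity of the $H^1$ seminorm, Fatou applied to the non-negative integrand of the nonlocal form, and strong $L^2$-convergence of the potential term imply that $\psi^\theta$ is a minimizer, whose Euler--Lagrange equation is exactly \eqref{EV}; standard elliptic regularity upgrades it to a classical solution.

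Next, I will show $\psi^\theta$ has constant sign and is unique up to scaling. Since $\|\partial_x|\phi|\|_{L^2}=\|\partial_x\phi\|_{L^2}$ and $||\phi(x)|-|\phi(y)||\leq|\phi(x)-\phi(y)|$, the quadratic form in the numerator of $\mathcal R$ does not increase under $\phi\mapsto|\phi|$, so $|\psi^\theta|$ is also a non-negative eigenfunction. A strong minimum principle then rules out zeros: if $|\psi^\theta|(x_0)=0$ at an interior point, then $-\partial_{xx}|\psi^\theta|(x_0)\leq 0$ while
\[
L|\psi^\theta|(x_0)=-\int_\Omega|\psi^\theta|(y)K(x_0-y)\,dy<0
\]
by \eqref{K} and $\psi^\theta\not\equiv 0$, contradicting \eqref{EV} at $x_0$; for a boundary zero I will apply Lemma \ref{Hopf} to $-|\psi^\theta|$ after adding a sufficiently large multiple of the zeroth-order term to make the coefficient non-negative, producing a strictly negative outward normal derivative that contradicts the Neumann condition. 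So $\psi^\theta>0$ on $\overline\Omega$. Uniqueness then follows by the classical sliding argument: for any second positive eigenfunction $\widetilde\psi$, the function $t^*\psi^\theta-\widetilde\psi$ with $t^*=\sup_\Omega(\widetilde\psi/\psi^\theta)$ is a non-negative eigenfunction that touches zero, and the same strong minimum principle forces it to vanish identically.

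Finally, for the $C^1$ dependence on $(\theta,\rho)\in{]{-A,A}[}\times H^1(\Omega)$ I will apply the implicit function theorem to
\[
F(\lambda,\psi,\theta,\rho)=\Bigl(-\partial_{xx}\psi+L\psi-(R(\cdot,\theta)-\rho)\psi-\lambda\psi,\ \tfrac12\bigl(\|\psi\|_{L^2}^2-1\bigr)\Bigr),
\]
viewed as a $C^1$ map from $\mathbb{R}\times\{\psi\in H^2(\Omega):\partial_{\nu_x}\psi=0\}\times{]{-A,A}[}\times H^1(\Omega)$ to $L^2(\Omega)\times\mathbb{R}$; the smoothness of $F$ uses \eqref{H2Chap5}, \eqref{K} and the continuity of the embedding $H^1(\Omega)\hookrightarrow L^\infty(\Omega)$. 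At $(\lambda(\theta,\rho),\psi^\theta,\theta,\rho)$, the partial derivative $\partial_{(\lambda,\psi)}F$ is a compact perturbation of $-\partial_{xx}$ and therefore Fredholm of index zero; simplicity proven above identifies its kernel with $\mathbb{R}(0,\psi^\theta)$, which is exactly the direction eliminated by linearizing the normalization constraint, so the linearization is invertible and the implicit function theorem applies. In my view the main obstacle is handling the boundary case in the positivity argument cleanly, since the joint presence of the local Laplacian and the nonlocal operator $L$ with Neumann conditions prevents a direct appeal to classical Hopf-type statements and is precisely the reason Lemma \ref{Hopf} had to be established beforehand.
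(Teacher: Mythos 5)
Your approach is a genuinely different route from the paper's. The paper obtains the eigenpair from the Krein--Rutman theorem applied to the resolvent operator $\mathcal L(v)$ defined by solving $-\partial_{xx}w+Lw-(R-\rho-C)w=v$ with Neumann conditions for a large constant $C$, and shows $\mathcal L$ maps the positive cone of $C^{1+\alpha}$ into its interior; you instead minimize the Rayleigh quotient \eqref{RayleighQuotient} directly over the unit sphere of $L^2$ within $H^1$. Both are valid here. Your route is somewhat more elementary and exploits the self-adjointness of $-\partial_{xx}+L-(R-\rho)$ (symmetry of $K$), which the paper itself invokes when it recalls $\lambda(\theta,\rho)=\min_\phi \mathcal R(\theta,\rho,\phi)$; Krein--Rutman would extend to non-self-adjoint $K$, which is not needed here. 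Your interior strong-minimum argument (the observation $L|\psi^\theta|(x_0)=-\int_\Omega|\psi^\theta|(y)K(x_0-y)\,dy<0$ at a zero) and your implicit-function-theorem treatment of the $C^1$ dependence, using simplicity and Fredholmness, match the mechanism the paper uses with its map $G$.

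There is, however, one concrete misstep: the boundary positivity argument. You propose to apply Lemma \ref{Hopf} to $-|\psi^\theta|$. But the hypothesis of Lemma \ref{Hopf} is that the \emph{minimum} of the function is attained at a boundary point with \emph{strictly negative} value. At a hypothetical boundary zero $x_0$ of $|\psi^\theta|$, the function $-|\psi^\theta|$ attains a \emph{maximum} (equal to $0$), not a minimum; the minimum of $-|\psi^\theta|$ is $-\max|\psi^\theta|<0$, attained wherever $|\psi^\theta|$ is maximal, not at $x_0$. Applying Lemma \ref{Hopf} to $|\psi^\theta|$ itself also fails, since $\min|\psi^\theta|=|\psi^\theta|(x_0)=0\not<0$; indeed the proof of Lemma \ref{Hopf} genuinely uses $u(x_0)<0$ (to ensure $c(x)(u(x_0)-\varepsilon v(x))\le 0$ on the annulus), so one cannot simply drop that hypothesis. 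The clean fix is the even-reflection argument that the paper invokes elsewhere (for Theorem \ref{Harncak1}, via Remark 9 p.~275 of \cite{brezis}): extend $|\psi^\theta|$ and the coefficients by even reflection across $\partial\Omega$ so that $x_0$ becomes an interior point, after which your own interior argument ($-\partial_{xx}|\psi^\theta|(x_0)\le0$ while $L|\psi^\theta|(x_0)<0$ and $c|\psi^\theta|(x_0)=0$) yields the contradiction. With that repair, the positivity step and hence the sliding-type uniqueness argument go through, and the rest of your proposal is sound.
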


In the following, we will consider that $\psi^{\theta}$ is positive and of $L^2$ norm equal to $1$.

\begin{proof}
First, we prove the existence of the principal eigenpair by verifying that we can apply the Krein Rutman Theorem (see \cite{Smoller} p 122). Since it is classical, we do not provide all the details. The cone of functions where we apply the Krein-Rutmann Theorem is 
\[K = \overline{ \left\lbrace u \in C^{1+\alpha}(\Omega) \ | \ u > 0 \ \text{ and } \ \partial_{\nu_x} u = 0 \right\rbrace}.\]
We define $\mathcal{L}(v)$ as the unique solution of 
\begin{equation*}
    \left\lbrace
    \begin{aligned}
    &-\partial_{xx} \mathcal{L}(v) + \int_{\Omega} [\mathcal{L}(v)(x) - \mathcal{L}(v)(y)]K(x-y)dy - (R(\cdot, \theta) - \rho - C) \mathcal{L}(v) = v \text{ in } \Omega, \\
    &\partial_{\nu_x} \mathcal{L}(v) = 0 \text{ on } \partial \Omega
    \end{aligned}
    \right.
\end{equation*}
where $C > \underset{x \in \Omega}{\sup} (R(x, \theta) - \rho(x))$ and $v \in K$. The operator $\mathcal{L}$ is linear, compact thanks to the elliptic estimates. We have to prove that 
\[ \forall v \in K \backslash \left\lbrace 0 \right\rbrace, \quad \mathcal{L}(v) \in \mathrm{int}(K).\]
Let $v$ be in $K$ with $v$ not trivial. By elliptic regularity, it follows $\mathcal{L}(v) \in C^{1+\alpha}$ and $\partial_{\nu_x} \mathcal{L}(v) = 0$. It remains to prove that $\mathcal{L}(v) > 0$.\\
First we prove that if $\mathcal{L}(v)$ is constant then it is necessarily a positive constant. Next we prove that if $\mathcal{L}(v)$ varies then $\mathcal{L}(v) >0$.\\
Assume that $\mathcal{L}(v) = c$. Let $\overline{x}\in \Omega$ be such that $v(\overline{x}) > 0$. Moreover, the choice of $C$ gives $-(R(\overline{x}, \theta) - \rho(\overline{x}) -C) >0$ and since $-\partial_{xx} \mathcal{L}(v)  = L(\mathcal{L}(v)) = 0$, we deduce that 
\[\mathcal{L}(v)(\overline{x}) = c = \frac{ v(\overline{x} )}{- (R(\overline{x}, \theta) - \rho(\overline{x}) - C) } >0.\]
Next, we suppose that $\mathcal{L}(v)$ is not constant. Assume by contradiction that there exists $x$ such that $\mathcal{L}(v)(x) \leq 0$. Let $\overline{x}'  \in \overline{\Omega}$ be such that 
\[ \underset{ x \in \Omega}{\inf} \ \mathcal{L}(v)(x)  = \mathcal{L}(v)(\overline{x}').\]
Then either $\overline{x}' \in \Omega$ or $\overline{x}' \in \partial \Omega$. In the first case, we deduce that 
\[ -\partial_{xx}  \mathcal{L}(v) (\overline{x}') \leq 0 \quad \text{ and } \quad L(\mathcal{L}(v) )(\overline{x}') <0,\]
which leads to the following contradiction
\[ 0 \leq v(\overline{x}') = -\partial_{xx}  \mathcal{L}(v) (\overline{x}') + L(\mathcal{L}(v) )(\overline{x}') - (R(\overline{x}', \theta) - \rho(\overline{x}') - C) \mathcal{L}(v) (\overline{x}') < 0.\]
If $\overline{x}' \in \partial \Omega$, since $\mathcal{L}(v)$ is not constant, we deduce from Lemma \ref{Hopf} that $\partial_{\nu_x} \mathcal{L}(v)(\overline{x}')<0$. It is in contradiction with the Neumann boundary condition. 

We conclude that we can apply the Krein Rutman theorem and the conclusion follows.

\bigbreak

Next, we focus on the regularity of $\lambda$ and $\psi^{\theta}$ with respect to $\theta$ and $\rho$. The result follows directly from the implicit function theorem applied to 
\[ G:(\phi, \lambda, \theta, \rho) \in H^1(\Omega) \times \mathbb{R}  \times \mathbb{R} \times H^1(\Omega) \mapsto  \left( -\partial_{xx} \phi + L \phi - [R(\cdot, \theta) - \rho + \lambda] \phi, \int_\Omega \phi(x)^2dx - 1 \right). \]
The interested reader may refer to Theorem 2 of Chapter 11 of \cite{Evans-PDE} for technical details in a finite dimensional setting.

\end{proof}

The existence of the solution of \eqref{vpchap52} is also due to the Krein-Rutman Theorem, therefore we do not provide the proof of existence. 

\subsection*{A.3- Some properties of $\lambda$}

We prove in this subsection two propositions that are stated in section \ref{sec:qual}: Propositions \ref{derivlambda} and \ref{propo:lambda:result2}. 

\begin{proof}[Proof of Proposition \ref{derivlambda}]
First recall from Proposition \ref{propev} that $\lambda$ and $\psi^\theta$ are $C^1$ functions with respect to $\theta$.
Differentiating \eqref{vpchap5} with respect to $\theta$ leads to
\begin{equation}\label{pfderivvp1}
\partial_\theta \lambda(\theta, \rho )  \psi^\theta +  \lambda(\theta, \rho) \partial_{\theta} \psi^\theta =  -\partial_{xx} \partial_{\theta} \psi^\theta + L (\partial_\theta \psi^\theta) - [R(\cdot, \theta) - \rho] \partial_\theta \psi^\theta - \partial_{\theta } R (\cdot, \theta) \psi^{\theta}.
\end{equation}
We multiply \eqref{pfderivvp1} by $\psi^\theta$ and  integrate by parts, recalling $\int_\Omega \psi^{\theta}(x)^2 dx = 1$, to obtain that
\begin{equation}\label{pfderivvp2}
\partial_{\theta } \lambda(\theta, \rho) = - \int_{\Omega}\partial_{\theta }R (x, \theta) \psi^{\theta}(x)^2 dx +  \int_{\Omega} \partial_x \partial_\theta \psi^\theta \partial_x \psi^\theta + L(\psi^\theta) \partial_\theta \psi^\theta - [ R(x, \theta) - \rho - \lambda(\theta, \rho) ] \partial_\theta \psi^\theta \psi^\theta dx . 
\end{equation}
We remark that multiplying \eqref{vpchap5} by $\partial_\theta \psi^\theta$ and integrating by part leads to 
\[ \int_{\Omega}  \partial_x \psi^\theta \partial_x \partial_\theta \psi^\theta + L(\psi^\theta) \partial_\theta \psi^\theta - [ R(x, \theta) - \rho  - \lambda(\theta, \rho) ]  \psi^\theta \partial_\theta \psi^\theta dx  = 0.\]
The conclusion follows. 
\end{proof}


\begin{proof}[Proof of Proposition \ref{propo:lambda:result2}]
We focus on the proof of \eqref{Claim2} since the proof of \eqref{Claim1} follows straightforward computations. 

\bigbreak

Since the function $R$ is $C^2$ with respect to $\theta$ and $g$, according to the implicit function theorem, the eigenfunction $\psi^{\theta}$ is $C^2$ with respect to its parameters $g$ and $\theta$. In this proof, we will take into account in the notations this dependence with respect to the parameters: we will denote $\psi^{\theta}$ by $\psi^{\theta, g}$, $\lambda(\theta)$ by $\lambda(\theta, g)$ and $R$ by $R_g$.

\vspace{0.2cm}

First, we establish that $\int_{\Omega} \left( \partial_\theta \psi^{\theta, g}(x) \right)^2dx \underset{g \to 0}{\longrightarrow} 0$ for a fixed value $\theta \in ]-A,A[$. Since $\| \psi^{\theta,g} \|_{L^2(\Omega)}^2 = 1$, we deduce that $\int_{\Omega} \psi^{\theta,g}(x) \partial_\theta \psi^{\theta,g} (x) dx = 0$. If we differentiate \eqref{EV} with respect to $\theta$, we have that $\partial_\theta \psi^{\theta,g}$ is solution in $\Omega$ of
\[- \partial_{xx} \partial_\theta \psi^{\theta,g} + L(\partial_\theta \psi^{\theta,g}) - [R_g(\cdot , \theta) - \rho] \partial_\theta \psi^{\theta,g} - \partial_\theta R_g(\cdot, \theta) \psi^{\theta,g}  = \lambda(\theta, g) \partial_\theta \psi^{\theta,g} + \partial_\theta \lambda(\theta,g) \psi^{\theta,g}.\]
We then multiply the above equation by $\psi^{\theta, g}$ and  integrate over $\Omega$ to obtain that
\begin{align*}
&\int_{\Omega} \left(\partial_x (\partial_\theta \psi^{\theta,g})\right)^2 dx -
 \frac{\int_{\Omega \times \Omega} [\partial_{\theta} \psi^{\theta,g}(x) - \partial_\theta \psi^{\theta,g}(y)]^2 K(x-y)dydx}{2} - \int_\Omega [R_g(x, \theta) - \rho(x)] \partial_\theta \psi^{\theta,g}(x)^2 dx  \\
&= \lambda(\theta , g) \int_{\Omega}\left(\partial_\theta \psi^{\theta, g} \right)^2dx + \int_{\Omega } \partial_\theta R_g(x, \theta) \psi^{\theta, g}(x) \partial_\theta \psi^{\theta, g}(x) dx + \partial_\theta \lambda(\theta, g) \int_{\Omega} \psi^{\theta, g}(x) \partial_\theta \psi^{\theta, g} (x)dx. 
\end{align*} 
Remarking that $\partial_\theta R_0 = 0$ and recalling that $\int_{\Omega} \psi^{\theta, 0}(x) \partial_\theta \psi^{\theta, 0} (x)dx = 0$, we deduce that $\partial_\theta \psi^{\theta, 0}$ belongs to the eigenspace associated to the principal eigenvalue $\lambda(\theta, 0)$ of the operator $-\partial_{xx} + L - [R_0 - \rho]$. Since this space is one dimensional, engendered by $\psi^{\theta, 0}$ and using again that $\psi^{\theta, 0}$ is orthogonal to $\partial_\theta \psi^{\theta, 0}$, we conclude that 
\[ \int_{\Omega} \left( \partial_\theta \psi^{\theta, g}(x) \right)^2 dx \underset{ g \to 0}{\rightarrow} 0.\]

\vspace{0.2cm}

Next, we prove that this convergence is uniform with respect to $\theta$. By compactness of $[-A, A] \times [0, \widetilde{g}]$ (for some $\widetilde{g}>0$), we deduce that there exists a uniform constant $C>0$ such that for all $(\theta,g) \in [-A, A] \times [0, \widetilde{g}]$ we have
\[ |\partial_{\theta \theta } \psi^{\theta, g} | < C. \]
It follows that for any $\theta_1, \theta_2 \in ]-A, A[$, we have
\[ \int_{\Omega} \left(\partial_\theta \psi^{\theta_1, g}(x) \right)^2dx  \leq \int_{\Omega} \left( \partial_\theta \psi^{\theta_2, g}(x) \right)^2dx   + 2C^2 \mathrm{mes}(\Omega) |\theta_1 - \theta_2|^2\]
(where $\mathrm{mes}(\Omega)$ stands for the Lebesgue mesure of $\Omega$). 
Next, we fix $\mu>0$ and we prove that for $g<g_0$ (for some $g_0>0$) we have (independtly of the choice of $\theta$)
\[ \int_{\Omega} \left( \partial_\theta \psi^{\theta, g}(x) \right)^2 dx < \mu.\]
By compactness of $[-A, A]$, there exists an integer $i_0>0$ and $\theta_i \in ]-A, A[$ with $i \in \left\lbrace 1, ..., i_0 \right\rbrace$ such that 
\[ [-A, A] \subset \underset{ i = 1}{\overset{ i_0 }{\bigcup}} B\left(\theta_i , \sqrt{\frac{ \varepsilon }{ 4 C^2 \mathrm{mes}(\Omega)}} \right). \]
Next, we define $g_i = \sup \left\lbrace g' \in [0, \widetilde{g}] \ : \ \forall g<g', \  \int_{\Omega} \left( \partial_\theta \psi^{\theta_i, g}(x) \right)^2 dx < \frac{\mu}{2} \right\rbrace$ (notice that $g_i>0$). By setting $g_0 = \underset{ i \in \left\lbrace 1, ..., i_0 \right\rbrace}{\min } g_i$, for any $\theta \in ]-A, A[$, we conclude that for all $g<g_0$ we have
\[ \int_{\Omega} \left(\partial_\theta \psi^{\theta, g}(x) \right)^2dx  \leq \int_{\Omega} \left( \partial_\theta \psi^{\theta_i, g}(x) \right)^2dx   + 2C^2 \mathrm{mes}(\Omega) |\theta - \theta_i|^2 < \mu\]
with $i \in \left\lbrace 1, ..., i_0 \right\rbrace$ such that $\theta \in B\left(\theta_i , \sqrt{\frac{ \varepsilon }{ 4 C^2 \mathrm{mes}(\Omega)}} \right)$. It concludes the proof of the uniform convergence.  

\end{proof}

\section*{Appendix B- Proof of Lemma \ref{lemmamu}}\label{AppB}

\addcontentsline{toc}{section}{Appendix B- Proof of Lemma \ref{lemmamu}}

The proof of Lemma \ref{lemmamu} follows essentially the steps of the proof of the convergence of $u_\varepsilon$ (i.e. second item of Theorem \ref{main}). Therefore, we will only emphasize the differences between the two proofs. 
We made the choice to provide the proof of the convergence of $u_\varepsilon$ rather than the convergence of $\mu_\varepsilon$ because it is the result that motivated the current study. 

\begin{proof}[Proof of Lemma \ref{lemmamu}]
We recall the equation satisfied by $\mu_\varepsilon $ and $\xi_\varepsilon$
    \begin{equation*}\tag{\ref{vpchap52}}
\left\lbrace
\begin{aligned}
&-\partial_{xx} \xi_\varepsilon - \varepsilon^2 \partial_{\theta\theta} \xi_\varepsilon + L \xi_\varepsilon  - R \xi_\varepsilon = \mu_\varepsilon \xi_\varepsilon  &&\text{ in } \Omega \times ]-A,A[, \\
& \partial_{\nu_x} \xi_\varepsilon = \partial_{\nu_\theta }\xi_\varepsilon = 0 && \text{ on } \partial ( \Omega \times ]-A,A[).
\end{aligned}
\right.
\end{equation*}
The existence of $\xi_\varepsilon$ is ensured by the Krein-Rutman Theorem. Moreover, according to the Krein-Rutman Theorem, the sign of $\xi_\varepsilon$ is constant. Therefore, we consider that $\xi_\varepsilon > 0$, $\|\xi_\varepsilon\|_{L^2} = 1$ and we define 
\[v_\varepsilon = \varepsilon \ln (\xi_\varepsilon).\]
Next, we prove that $\mu_\varepsilon$ is bounded from below and above respectively by $-\sup R$ and $-\inf R$. \\ First, we focus on the upper bound. Let $(\overline{x}, \overline{\theta}) \in \overline{\Omega} \times [-A,A]$ be such that $\underset{(x, \theta)  \in \overline{\Omega}\times [-A,A]}{\sup} \xi_\varepsilon (x, \theta) = \xi_\varepsilon (\overline{x}, \overline{\theta})$. If $(\overline{x}, \overline{\theta}) \in \Omega \times ]-A,A[$, it follows 
\[ (-\partial_{xx} \xi_\varepsilon - \varepsilon^2 \partial_{\theta\theta} \xi_\varepsilon + L(\xi_\varepsilon))(\overline{x}, \overline{\theta}) \leq 0.\]
From \eqref{vpchap52}, we deduce that 
\[ \mu_\varepsilon \leq - R(\overline{x}, \overline{\theta}) \leq - \inf R.\]
If $(\overline{x}, \overline{\theta})$ belongs to $ \partial \left( \Omega \times ]-A,A[ \right)$, we conclude with a reflective argument and the same computations as in the previous case. In any case, for all $\varepsilon>0$ we have 
\[ \mu_\varepsilon < - \inf{R}.\]
Next, we focus on the lower bound. Let $(\underline{x}, \underline{\theta}) \in \overline{\Omega} \times [-A, A]$ be such that $\underset{ (x, \theta) \in \overline{\Omega} \times [-A,A]}{\inf} \xi_\varepsilon = \xi_\varepsilon (\underline{x}, \underline{\theta})$. With similar arguments as for the upper bound, we deduce that
\[ - \sup \ R \leq -R (\underline{x}, \underline{\theta}) \leq \mu_\varepsilon.\]
Therefore, $\mu_\varepsilon$ is uniformly bounded from below and above thus $\mu_\varepsilon$ converges along subsequences to $\mu$.\\
Next, as we have established Lipschitz and uniform bounds on $u_\varepsilon$, we can prove that there exists a constant $C>0$ such that
\begin{align*}
& |\partial_x v_\varepsilon | < C \varepsilon,  \qquad |\partial_\theta v_\varepsilon | < C, \qquad  -C < \underset{\varepsilon \rightarrow 0}{\lim} \ \underset{  \Omega \times ]-A,A[}{\inf} v_\varepsilon, \qquad \text{ and } \qquad  \underset{\varepsilon \rightarrow 0}{\lim} \ \underset{  \Omega \times ]-A,A[}{\sup} v_\varepsilon\leq 0.
\end{align*}
Therefore, we deduce that $v_\varepsilon$ converges along subsequences to $v$. Moreover, with similar computations as in the proof of the second item of Theorem \ref{main}, we deduce that $v$ is a viscosity solution of 
\begin{equation}\label{HJChap5V}
\left\lbrace
\begin{aligned}
     &-[\partial_\theta v(\theta)]^2 = -\lambda(\theta , -\mu),\\
     &\underset{ \theta \in [-A,A]}{\max} v(\theta) = 0.
     \end{aligned}
     \right.
\end{equation}
Next, we claim that 
\begin{equation}\label{lambdamu}
\lambda(\theta , -\mu) = \lambda( \theta, 0) - \mu.
\end{equation}
We postpone the proof of this claim to the end of this paragraph. Thanks to \eqref{HJChap5V} and \eqref{lambdamu} we deduce that 
\begin{equation*}
\left\lbrace
\begin{aligned}
     &-[\partial_\theta v(\theta)]^2 = -\lambda(\theta , 0) +\mu,\\
     &\underset{ \theta \in [-A,A]}{\max} v(\theta) = 0.
     \end{aligned}
     \right.
\end{equation*}
Remark that $-\lambda(\theta, 0) + \mu \leq 0$ for all $\theta \in [-A,A]$. Next, we introduce $\theta_m \in [-A,A]$ such that 
\[ v(\theta_m) = \underset{ \theta \in [-A,A]}{\max} v(\theta) .\]
It follows that $\partial_\theta v(\theta_m) = 0$ and $-\lambda(\theta_m, 0) + \mu = 0= \max \ ( -\lambda(\theta, 0) + \mu)$. We deduce thanks to \eqref{assumption} that
\[0 =  \max (- \lambda(\theta, 0) + \mu ) = - \min \left(\lambda(\theta, 0)\right) +\mu = -\lambda(\theta_0, 0) + \mu.\]
We conclude that
\[  \lambda(\theta_0, 0) = \mu.\]
We finish the proof by remarking that the previous convergence result holds for any subsequence of $\mu_\varepsilon$. Therefore, we conclude that 
\[\underset{\varepsilon \rightarrow 0}{\lim} \  \mu_\varepsilon = \lambda(\theta_0, 0).\]
\vspace{0.5cm}

It remains to prove \eqref{lambdamu}. Let $\psi_\mu^{\theta}$ be the principal eigenfunction associated to the principal eigenvalue of $\lambda(\theta, -\mu)$ with $\mu$ a constant
    \begin{equation*}
\left\lbrace
\begin{aligned}
&-\partial_{xx} \psi_\mu^{\theta} + L \psi_\mu^{\theta}  - [R(\cdot, \theta) + \mu ] \psi_\mu^{\theta} = \lambda(\theta,-\mu) \psi_\mu^{\theta}  &&\text{ in } \Omega, \\
& \partial_{\nu_x} \psi_\mu^{\theta}  = 0 && \text{ on } \partial \Omega .
\end{aligned}
\right.
\end{equation*}
It follows that
\[ -\partial_{xx} \psi_\mu^{\theta} + L \psi_\mu^{\theta}  - R(\cdot, \theta) \psi_\mu^{\theta} = \left( \lambda(\theta,-\mu) + \mu \right) \psi_\mu^{\theta}.\]
Since $\mu$ is constant, $\psi_\mu^{\theta}>0$ and by the uniqueness of the positive eigenfunction of $-\partial_{xx} + L - R(\cdot, \theta)$ (up to a multiplication by a scalar), we deduce that $\lambda(\theta,-\mu) + \mu  = \lambda(\theta, 0)$.
\end{proof}

\section*{Acknowledgements} Both authors are grateful to Robin Aguil\'ee for fruitful discussions on the biological motivations. S.M. thanks also Denis Roze for  valuable  discussions and references. S.M.  has recieved partial funding from the  ANR project DEEV ANR-20-CE40-0011-01 and the chaire Mod\'elisation Math\'ematique et Biodiversit\'e of V\'eolia Environment - \'Ecole Polytechnique - Museum National d'Histoire Naturelle - Fondation X.

\bibliographystyle{plain}

{\footnotesize

 \bibliography{Biblio}}

\end{document}